\documentclass[11pt]{article}
\usepackage[T1]{fontenc}
\usepackage[utf8]{inputenc}
\usepackage{lmodern}
\usepackage{amsmath,amssymb,amsthm,mathrsfs}
\usepackage{mathtools}
\usepackage[margin=1in]{geometry}
\usepackage{microtype}
\usepackage{parskip}
\usepackage{needspace}
\usepackage{xcolor}
\usepackage{enumitem}
\usepackage{booktabs,graphicx,tikz,placeins}
\usetikzlibrary{arrows.meta,calc,patterns}
\usepackage[hidelinks,bookmarksnumbered=true]{hyperref}

\hypersetup{pdftitle={Hausdorff dimension and Lebesgue measure of rotated planar difference sets},pdfauthor={Thang Pham, Chun-Yen Shen and Dung The Tran},pdfsubject={Rotated planar difference sets}}
\setlist{nosep}
\numberwithin{equation}{section}
\newtheorem{theorem}{Theorem}[section]
\newtheorem{proposition}[theorem]{Proposition}
\newtheorem{lemma}[theorem]{Lemma}
\newtheorem{corollary}[theorem]{Corollary}
\theoremstyle{definition}
\newtheorem{remark}[theorem]{Remark}
\newcommand{\R}{\mathbb R}
\newcommand{\HH}{\mathcal H}
\newcommand{\LL}{\mathcal L}
\newcommand{\OO}{\mathrm O(2)}
\newcommand{\SO}{\mathrm{SO}(2)}
\newcommand{\dimH}{\dim_{\mathrm H}}
\newcommand{\supp}{\operatorname{supp}}
\newcommand{\one}{\mathbf 1}
\newcommand{\cD}{\mathcal D}
\newcommand{\cT}{\mathcal T}
\newcommand{\dd}{\,d}
\newcommand{\norm}[1]{\left\|#1\right\|}
\newcommand{\less}{\lesssim}

\newcommand{\sbase}{s_0}
\newcommand{\dimP}{\dim_{\mathrm P}}
\DeclareMathOperator{\dist}{dist}

\title{On Mattila’s intersection problem in the plane: sharp dimension and positive measure of rotated difference sets}
\author{%
  Thang Pham\thanks{Institute of Mathematics and Interdisciplinary Sciences,
    Xidian University. \newline
\hspace*{0.45cm} Email: \texttt{thangphammath@xidian.edu.cn}.}%
\and Chun-Yen Shen \thanks{National Taiwan University. ~Email: {\tt cyshen@math.ntu.edu.tw.}}
  \and Dung The Tran \thanks{VNU University of Science, Hanoi, Vietnam. ~Email: {\tt tranthedung56@gmail.com.}}
}
\date{}
\begin{document}
\maketitle

\begin{abstract}
Let \(A,B\subset\R^2\) be nonempty Borel sets. We prove that
\[
 \dimH(gA-B)\ge\min\{\dimH A+\dimH B,2\}
\]
for almost all \(g\in\mathrm O(2)\). This bound is sharp in general,
and the proof gives a corresponding statement for the lower Hausdorff
dimension of convolutions of Frostman measures. We also prove that
\(\LL^2(gA-B)>0\) for almost all \(g\) if both dimensions exceed
\((\sqrt{13}-1)/2\). The second result follows from an angular
decomposition, refined decoupling and a two-ends Furstenberg estimate. Some applications on intersection patterns are also addressed.
\end{abstract}
{\footnotesize
\noindent\textit{2020 Mathematics Subject Classification.} Primary 28A78; Secondary 42B37.\par
\noindent\textit{Key words and phrases.} Hausdorff dimension, Lebesgue measure, difference set, projection, refined decoupling.\par}


\section{Introduction}
\label{sec:introduction}

The size of the intersection of two sets after a rigid motion is a
classical question in geometric measure theory. For nonempty Borel sets
\(A,B\subset\R^2\) and an orthogonal transformation \(g\in\OO\), set
\[
 gA-B=\{ga-b:a\in A,\ b\in B\}.
\]
This is the set of translations for which the two sets meet, since
\[
 z\in gA-B\quad\Longleftrightarrow\quad gA\cap(B+z)\ne\varnothing.
\]
We study its Hausdorff dimension and Lebesgue measure. We write
\(\dimH\) for Hausdorff dimension, \(\dimP\) for packing dimension,
\(\HH^s\) for \(s\)-dimensional Hausdorff measure, and \(\LL^n\) for
Lebesgue measure on \(\R^n\). The Haar probability measure on \(\OO\)
is denoted by \(m\). The phrase ``almost all orthogonal transformations''
refers to this measure.

Mattila \cite[Theorem~6.6]{Mattila1984} proved that
\(\LL^2(rgA-B)>0\) for \(m\times\LL^1\)-almost all
\((g,r)\in\OO\times(0,\infty)\) when
\(\dimH A+\dimH B>2\). For rigid motions, the intersection theorem
in \cite{Mattila1985}, see also \cite[Theorem~13.11]{Mattila}, gives
\(\LL^2(gA-B)>0\) for almost all \(g\) under the conditions
\[
 \dimH A+\dimH B>2,
 \qquad \max\{\dimH A,\dimH B\}>\frac32.
\]

Mattila \cite[Theorem~4.2]{Mattila2022} subsequently obtained this
conclusion under either of the conditions
\begin{equation}\label{eq:previous-area-region}
 \dimH A+\frac12\dimH B>2,
 \qquad \dimH B+\frac12\dimH A>2.
\end{equation}
Thus, both dimensions above \(4/3\) suffice. The same theorem also gives
\[
 \dimH(gA-B)\ge\min\{2,\dimH A+\tfrac12\dimH B\}
\]
and the symmetric bound for almost all \(g\).

The sharp dimension bound was already known when
\(\min\{\dimH A,\dimH B\}\le1/2\), by
\cite[Theorem~4.2(2)]{Mattila2022} and the preceding positive-measure
result. The case \(\dimH A+\dimH B\le1\) also follows from the
projection theorem for general subsets of \(\R^4\) in
\cite[Theorem~3.4]{Mattila2022}.

For the intersections themselves, \cite[Theorem~1.3]{Mattila2024}
shows that if \(\alpha,\beta>0\), \(\alpha+\beta/2>2\),
\(0<\HH^\alpha(A)<\infty\), and \(0<\HH^\beta(B)<\infty\),
then for almost all \(g\) the inequality
\(\dimH(gA\cap(B+z))\ge\alpha+\beta-2\)
holds on a set of translations of positive Lebesgue measure.

It is expected that the sole condition \(\dimH A+\dimH B>2\)
implies \(\LL^2(gA-B)>0\) for almost all \(g\).
The intersection conjecture in \cite[(1.8)]{MattilaSurvey}
would give, for every \(\varepsilon>0\) and almost all \(g\),
intersections of dimension at least
\(\dimH A+\dimH B-2-\varepsilon\) for a set of translations
of positive Lebesgue measure. Our first result gives the sharp
dimension bound for every pair of nonempty Borel sets. In particular,
it gives full dimension when the sum of the dimensions is at least two.
Sharpness is also discussed in Section~\ref{sec:sharpness}. We now state our first main result.

\begin{theorem}\label{thm:full-dimension}
Let \(A,B\subset\R^2\) be nonempty Borel sets. Then for almost all
\(g\in\OO\),
\begin{equation}\label{eq:main-dimension}
 \dimH(gA-B)\ge\min\{\dimH A+\dimH B,2\}.
\end{equation}
Moreover, let \(\mu,\nu\) be compactly supported probability measures
with Frostman exponents \(\alpha,\beta\in(0,2]\), respectively. There
is a Borel set \(\mathcal G\subset\OO\) with \(m(\mathcal G)=1\) such that, for every
\(g\in\mathcal G\) and every Borel set \(S\subset\R^2\),
\[
 \dimH S<\min\{\alpha+\beta,2\}
 \quad\Longrightarrow\quad
 ((g_\#\mu)*\widetilde\nu)(S)=0.
\]
Here \(\widetilde\nu(E)=\nu(-E)\).
\end{theorem}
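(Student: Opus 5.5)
The plan is to reduce the set statement to the measure statement, and to prove the measure statement by an energy/projection argument in \(\R^4\).

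\emph{Reduction.} For Borel \(A,B\) with \(\dimH A,\dimH B>0\), Frostman's lemma gives, for any \(\alpha<\dimH A\) and \(\beta<\dimH B\), compactly supported probability measures \(\mu\) on \(A\) and \(\nu\) on \(B\) with exponents \(\alpha,\beta\). The measure \((g_\#\mu)*\widetilde\nu\) is a probability measure supported on \(gA-B\). If \(\dimH(gA-B)<\min\{\alpha+\beta,2\}\), take \(S=gA-B\) (or a Borel hull of it) in the measure statement; it would have measure \(0\), a contradiction. Taking countably many rational pairs \((\alpha,\beta)\) and intersecting the corresponding full-measure sets \(\mathcal G\) gives \eqref{eq:main-dimension}. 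If \(\dimH A=0\), the bound follows from \(gA-B\supset ga_0-B\).

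\emph{Measure statement.} Write \(\sigma_g=(g_\#\mu)*\widetilde\nu\), the pushforward of \(\mu\times\nu\) under \(\pi_g(x,y)=gx-y\). This is a linear map \(\R^4\to\R^2\), and \(g\mapsto\ker\pi_g\) is a one-parameter family of \(2\)-planes. The standard route goes through lower dimensions of measures. It suffices to show that, for a.e.\ \(g\) and every \(\gamma<\min\{\alpha+\beta,2\}\), \(\sigma_g\) has \(\underline{\dim}_{\mathrm H}\sigma_g\ge\gamma\), i.e.\ \(\sigma_g\) gives zero mass to every set of dimension \(<\gamma\).

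To get this I would use the following criterion: \(\liminf_{r\to0}\log\sigma_g(B(z,r))/\log r\ge\gamma\) for \(\sigma_g\)-a.e.\ \(z\). By Borel–Cantelli over dyadic \(r=2^{-k}\), it suffices to bound
\[
\int_{\OO}\sigma_g\bigl(\{z:\sigma_g(B(z,2^{-k}))>2^{-k\gamma}\}\bigr)\,dm(g)\lesssim 2^{-k\epsilon}.
\]
By Chebyshev this is reduced to a bound on
\[
\int\!\!\int \sigma_g(B(z,r))\,d\sigma_g(z)\,dm(g)
=\iint m\{g:|g(x-x')-(y-y')|<r\}\,d(\mu\times\nu)^2 .
\]
Pure \(L^2\) energy only reaches thresholds of the type in \eqref{eq:previous-area-region}, so this is insufficient. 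Instead I would work with an exceptional-set estimate: remove from \(\mu\times\nu\) the "bad" part where some \(r\)-tube-like set (the preimage \(\pi_g^{-1}(B(z,r))\), an \(r\)-neighbourhood of a \(2\)-plane intersected with the support) carries mass \(>r^{\gamma}\). Then show the bad set has small \(m\)-measure in \(g\), via a multiscale incidence/Furstenberg-type estimate for this family of planes. Restricted to \(\mu\times\nu\), the quantity \(\mu\times\nu(\pi_g^{-1}B(z,r))=\int \nu(B(gx-z,r))\,d\mu(x)\), and heavy such sets correspond to many \(r\)-rich configurations of rotated copies.

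\emph{Main obstacle.} The hard step is this radial/Furstenberg-type bound showing that, for most \(g\), no \(r\)-ball in the image is too heavy. I would first try to handle it via a Kaufman-type argument at each fixed scale. The product structure \(\mu\times\nu\) has Frostman exponent \(\alpha+\beta\) in \(\R^4\), but only on product boxes. The rotation family is one-dimensional, so the naive transversality gives only \(\min\{\alpha+\beta,\,\alpha+1,\,\beta+1,2\}\)-type bounds. Closing the gap will require an induction on scales or bootstrapping on the lower dimension of \(\sigma_g\): assume \(\underline{\dim}\,\sigma_g\ge t\) for a.e.\ \(g\), then improve \(t\) using the Frostman condition of \(\nu\) inside each heavy ball, until \(t\) reaches \(\min\{\alpha+\beta,2\}\).
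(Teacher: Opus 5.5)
Your reduction from sets to measures is fine; the paper does the same, using the compact sets $gA_n-B_n$ instead of a Borel hull. Recasting the measure statement as a bound on the mass of the ``heavy'' part of $\sigma_g$, averaged over $g$ and summable over dyadic scales, also has the right shape: the sets $E_j$ and $\Omega_j$ in the proof of Proposition~\ref{prop:full-dimension-measure} play exactly this role.

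The gap is that the proposal stops where the content of the theorem begins. You concede that the energy/Chebyshev route only reaches thresholds like \eqref{eq:previous-area-region}. What you put in its place is an unspecified ``multiscale incidence/Furstenberg-type estimate'' followed by a bootstrapping of $\underline{\dim}_{\mathrm H}\,\sigma_g$, and that is a hope rather than a step. No argument that uses only the fact that $\mu\times\nu$ is $(\alpha+\beta)$-Frostman in $\R^4$ can succeed; note that it is Frostman on all balls, not just on product boxes. The paper's example $E=\{(a,g_0a)\}$, with $g_0$ a reflection, carries a $2$-Frostman measure on $\R^4$ whose image under $T_{R_\theta}$ lies in a line for every rotation. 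Your sketch never says how the separate Frostman bounds on $\mu$ and $\nu$ would defeat such concentration.

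The ``naive'' bound $\min\{\alpha+\beta,\alpha+1,\beta+1,2\}$ that you attribute to a Kaufman-type argument is also unjustified. It would already give the sharp bound for $\alpha,\beta\le1$ and full dimension for $\alpha,\beta\ge1$. The previously known bounds are Mattila's $\min\{2,\alpha+\beta/2\}$ and its mirror image, which are sharp only when $\min\{\alpha,\beta\}\le1/2$ or $\alpha+\beta\le1$.

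The missing ideas are these.
\begin{itemize}
\item \emph{Cayley coordinates.} The substitution $u=\tan(\theta/2)$, $p=a-b$, $v=\mathsf J(a+b)$ gives $(\operatorname{Id}-u\mathsf J)(R_\theta a-b)=p+uv$ (Lemma~\ref{fd:lem:cayley}). Each pair $(a,b)$ thus becomes a line $\{(u,p+uv)\}$ in $\R^3$. A heavy set in rotation--translation space becomes a set $\Omega\subset\R^3$ that occupies a fraction comparable to its mass of many $\delta$-tubes about these lines. Bounding its mass by a power of $|\Omega|$ is therefore a Kakeya-type volume problem in $\R^3$, not a planar Furstenberg problem.
\item \emph{Product structure.} This enters through the anisotropic prism bound $\eta(\mathcal A(W))\lesssim\min\{r_1^\alpha r_2^\beta,r_1^\beta r_2^\alpha\}$ of Proposition~\ref{fd:prop:product}. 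It is precisely what excludes the planar concentration of the reflection example.
\item \emph{Volume estimate.} After weighted random sampling, the prism bound yields the Frostman convex Wolff condition. The Wang--Zahl shaded-tube estimate (Theorem~\ref{fd:thm:WZ}) then gives the testing bound $\int_\Omega F_\delta\lesssim\delta^{s/2}+(\delta^{-(2-s)-\varepsilon}|\Omega|)^{1/Q_{\rm test}}$ of Proposition~\ref{fd:prop:testing}.
\end{itemize}
Only with an estimate of this strength does a Borel--Cantelli scheme like yours have something summable to work with.
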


The proof begins with the Cayley change of coordinates in
Lemma~\ref{fd:lem:cayley}. It associates a line in \(\R^3\) to each
pair \((a,b)\), so that rotated differences become slices of a family
of lines. Mattila \cite[Section~6.7]{Mattila2017} discussed the relation
between such slices, projections and Kakeya type estimates. In our setting,
the product structure of the measure on pairs gives the prism estimate
in Proposition~\ref{fd:prop:product}. This estimate depends separately
on the two transverse widths and excludes the planar concentration
that can occur for general measures on \(\R^4\).

We next sample the weighted line family to apply the shaded-tube
volume estimate of Wang and Zahl \cite[Theorem~1.2]{WZ}, stated as
Theorem~\ref{fd:thm:WZ} below. We also explain below its relation to the
streamlined proof of Guth, Wang and Zahl \cite{GWZ}. This gives a bound for the mass carried by
small sets in rotation--translation space. We apply it to dyadic
covers and obtain a summable bound for the exceptional rotations.
The covers are selected before the set \(S\) is fixed, so one null
set of rotations works for every \(S\) in the measure statement.
Frostman's lemma then gives the conclusion for Borel sets.
The details are in Sections~\ref{sec:preliminaries}
and~\ref{sec:proof-full-dimension}.
For related work on unions of line segments satisfying a
Frostman convex Wolff condition, see Craig \cite{Craig}.

Moreover, for a compactly supported \(t\)-Frostman measure \(\mu\) on
\(\R^2\), Yu \cite[Corollary~1.5]{Yu} proved that
\((g_\#\mu)*\widetilde\mu\) has an \(L^2\) density for almost all
\(g\) when \(t>4/3\). Section~7 of that paper also treats two measures
under the asymmetric condition corresponding to
\eqref{eq:previous-area-region}. Our second theorem lowers the common
threshold for absolute continuity below \(4/3\). Set
\[
 \sbase=\frac{\sqrt{13}-1}{2}=1.302775\ldots.
\]

\begin{theorem}\label{thm:breaking-four-thirds}
Let \(t>\sbase\), and let \(\mu,\nu\) be compactly supported
\(t\)-Frostman probability measures on \(\R^2\). Then for almost all
\(g\in\OO\),
\[
 (g_\#\mu)*\widetilde\nu\ll\LL^2.
\]
In particular, if \(A,B\subset\R^2\) are Borel sets with
\(\min\{\dimH A,\dimH B\}>\sbase\), then
\(\LL^2(gA-B)>0\) for almost all \(g\in\OO\).
\end{theorem}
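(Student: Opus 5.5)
We aim to show the averaged $L^2$ bound
\[
 \int_{\OO}\int_{\R^2}\bigl|\widehat{(g_\#\mu)*\widetilde\nu}(\xi)\bigr|^2\,d\xi\,dm(g)
 =\int_{\OO}\int_{\R^2}|\widehat\mu(g^{-1}\xi)|^2|\widehat\nu(\xi)|^2\,d\xi\,dm(g)<\infty ,
\]
since this gives an $L^2$ density, hence absolute continuity, for almost every $g$. The set statement then follows from Frostman's lemma, which provides $t$-Frostman measures on $A$ and $B$ for some $t\in(\sbase,\min\{\dimH A,\dimH B\})$. In polar coordinates and after averaging over $g$, the integral is
\[
 \sum_{R\ \mathrm{dyadic}} R\int_1^2\Bigl(\int_{S^1}|\widehat\mu(rR\theta)|^2d\theta\Bigr)\Bigl(\int_{S^1}|\widehat\nu(rR\theta)|^2d\theta\Bigr)dr .
\]
Circular averages alone give only $R^{-t/2}$ each, and this is the source of the $4/3$ threshold. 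We therefore keep the angular structure. First localize to $|\xi|\sim R$ and decompose $S^1$ into arcs $\tau$ of width $R^{-1/2}$. This yields wave packets of $\mu$ and $\nu$ at scale $R$: tubes of dimensions $R^{-1/2}\times R^{-1}$ in physical space after rescaling, which correspond to $1\times R^{-1/2}$ tubes at unit scale. The quantity to bound becomes $R\,\sum_{\tau,\tau'}$ of products of $\|\widehat\mu\|_{L^2(\tau\text{-sector})}^2$ against the rotated averages of $\nu$. The rotation averaging decouples which sectors pair up, so effectively we need $\sum_\tau \|\mu_\tau\|_2^2\,\|\nu_{\tau'}\|_2^2$ averaged over the angular shift.

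Next, bound the sector energies. Here we use refined decoupling (Guth--Iosevich--Ou--Wang style) for the circle at scale $R$. Applied to $\mu * \phi_{R^{-1}}$ restricted to the annulus, it bounds $\|\widehat\mu\|^2_{L^2(\text{annulus})}$ in terms of the number and multiplicity of significant wave packets. A pigeonholing step will normalize the tube weights. The Frostman condition then controls both the number of $R^{-1/2}$-tubes in each direction carrying mass and how much mass a single tube carries, roughly $R^{-t/2}$. To convert the direction distribution into a gain, we use the two-ends Furstenberg estimate (Wang--Wu type). It says that a $t$-dimensional set of points, each incident to a two-ends family of tubes, forces the union of tubes to have large measure. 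This bounds how much of the mass of $\mu$ can concentrate on few directional sectors, and gives an $L^\infty$-in-$\tau$ bound for $\|\mu_\tau\|_2^2$ better than the trivial one. The two-ends condition comes from the Frostman property, via a standard reduction that discards the non-two-ends part through rescaling and induction on scales.

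Finally, combine the two estimates. For each pair we use Cauchy--Schwarz or Hölder: the $L^\infty_\tau$ bound for one measure against the $L^1_\tau$ (total circular energy) bound $R^{1-t}$-type for the other. Optimizing gives a decay $R^{1-\gamma(t)}$ with $\gamma(t)>1$ exactly when $t^2+t>3$, that is $t>\sbase$. This accounts for the threshold: the quadratic $t^2+t-3=0$ should come out of matching the Furstenberg exponent against the decoupling loss. Summing over dyadic $R$ then converges. The main obstacle is the middle step. We must obtain a sector-uniform energy bound by feeding the two-ends Furstenberg estimate into refined decoupling with only $\varepsilon$-losses, and then handle the non-two-ends case through the scale induction without losing the gain. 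I would first try the case where mass is spread evenly across tubes, and treat concentrated tubes by rescaling to a smaller Frostman problem.
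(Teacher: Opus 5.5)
\textbf{There is a genuine gap: your target, an averaged \(L^2\) bound for the whole measure, cannot be reached by your combination step.}

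Averaging over \(g\) erases all angular information. By rotation invariance,
\[
 \int_{\OO}\int_{\R^2}|\widehat\mu(g^{-1}\xi)|^2|\widehat\nu(\xi)|^2\,d\xi\,dm(g)
 =2\pi\int_0^\infty r\,S_\mu(r)S_\nu(r)\,dr ,
\]
so keeping the sectors \(\tau\) buys nothing for this quantity; it depends only on the circular averages.

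Your final step pairs a sector-uniform bound for one measure with the annular bound \(R^{2-t}\) for the other. That step cannot go below \(4/3\), because Wolff's bound \(S_\mu(r)\lesssim r^{-t/2}\) is attained. Take \(R^{(t-1)/2}\) parallel \(R^{-1/2}\times1\) tubes, arranged \((t-1)\)-dimensionally, each of mass \(R^{(1-t)/2}\), with density modulated by \(1+\cos(2\pi Rx_2)\). This measure is \(t\)-Frostman and has \(S_\mu(r)\gtrsim R^{-t/2}\) for \(|r-R|\lesssim1\). Your pairing then gives only \(R^{2-3t/2}\), which is not summable for \(t<4/3\).

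In this example the heavy strips through every point all point in nearly the same direction. So the two-ends Furstenberg estimate says nothing about it. There is also no ``standard reduction by rescaling and induction on scales'' that discards it: absolute continuity concerns the whole measure, and this non-two-ends part is exactly where the extremal configurations live. Controlling \(\int rS_\mu S_\nu\,dr\) below \(4/3\) would need \(L^2\)-in-\(r\) information about such configurations, which neither refined decoupling nor Furstenberg supplies. The paper never proves an \(L^2\) bound; it obtains only an \(L^1\) (indeed \(L^{1+\eta}\)) density.

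\textbf{The missing idea is to give up \(L^2\) for one interaction term.} The paper splits each measure pointwise, by the directions of heavy strips through each point, into three symbols:
\begin{itemize}
\item \(c\): directions in at most \(K\lesssim w^{-\beta_c}L_w^2\) removed intervals of length \(O(w^{\alpha_c})\), where heavy directions cluster;
\item \(\mathrm l\): light strips;
\item \(d\): remaining heavy strips, which satisfy the two-ends condition at every dyadic mass threshold.
\end{itemize}
It then writes \(1-c_\mu c_\nu=\mathrm l_\mu+d_\mu+c_\mu\mathrm l_\nu+c_\mu d_\nu\).

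The term \(T_R(c,c)\) is estimated only in \(L^1\), averaged over \(g\). Two rotated sectors of lengths \(\lambda_1,\lambda_2\) overlap with probability \(O(\lambda_1+\lambda_2+w)\), and each sector-pair kernel has \(L^1\) norm \(O(L_w^2)\). This gives \(w^{\alpha_c-2\beta_c}L_w^6\).

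The four other terms are estimated in \(L^2\) through the rotation identity above. Each pairs a circular bound for \(\mathrm l\) or \(d\) with the annular bound \(R^{2-t}\) for \(1\) or \(c\):
\begin{itemize}
\item for \(\mathrm l\), refined decoupling gives \(R^{-t/3}h^{2/3}\);
\item for \(d\), the two-ends tail bound gives \(h(h/w)^{-2/(2-t)}\).
\end{itemize}

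The condition \(t^2+t-3>0\) is exactly what allows a strip-mass threshold \(h=w^{1-b_0}\) with \((3-2t)(2-t)/t<b_0<4t-5\). Your proposal has no threshold separating the decoupling regime from the Furstenberg regime, so the quadratic you quote has nothing to come from. Summability of \(\int\|P_R\lambda_g\|_1\,dm(g)\) over dyadic \(R\) then yields the \(L^1\) density.
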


The proof decomposes each measure according to the directions of
strips through its points. A strip is called heavy when its mass
exceeds a chosen threshold. We first remove short angular intervals
in which the heavy directions concentrate. The remaining heavy
directions satisfy a two-ends condition at every relevant mass
threshold. This leaves three parts: the removed angular intervals,
the light strips, and the remaining heavy strips.

The key step is to estimate the contribution of the two removed parts
in \(L^1\), averaged over rotations. Two short angular intervals
overlap for only a small set of rotations, which makes this term
summable over frequencies. We estimate the other terms in \(L^2\).
Refined decoupling controls the light strips, while a two-ends
Furstenberg estimate bounds the number of heavy strips. Balancing
these estimates gives the condition \(t^2+t-3>0\).
The resulting summability identifies the convolution with an
integrable density for almost all rotations.
Section~\ref{sec:proof-outline} gives the estimates behind this outline.
The same bounds give an averaged $L^{1+\eta_{\rm int}}$ estimate
for the densities, for some $\eta_{\rm int}>0$, see
Corollary~\ref{imp:integrability} for details. The same argument also allows different Frostman exponents for the two measures.
Corollary~6.2 gives a sufficient region which supplements (1.1).

Finally, it is also worth to mention that the first author and Yoo proved related results over prime fields.
By \cite[Theorem~5.2]{PY}, if \(p\equiv3\pmod4\) is prime and
\(A,B\subset\mathbb F_p^2\) have \(|A|,|B|\ge p\), then
\(|gA-B|\gtrsim p^2\) for a positive proportion of
\(g\in\mathrm{SO}(2,\mathbb F_p)\).
By \cite[Theorem~5.1]{PY}, if \(|A|,|B|\ge p^{5/4}\), then
\[
 |gA\cap(B+z)|\sim\frac{|A|\,|B|}{p^2}
\]
for \(\gtrsim p^2\) translations \(z\) and a positive proportion
of \(g\in\mathrm O(2,\mathbb F_p)\). In Section 8, we will prove the continuous analog of this statement under the condition \(\min\{\dimH A,\dimH B\}>\sbase\).

\section{Preliminaries}
\label{sec:preliminaries}

We first fix notation. We then prove the covering and tube estimates
needed for Theorem~\ref{thm:full-dimension}. The main estimate of this
section, Proposition~\ref{fd:prop:testing}, bounds the mass of a
measurable set in the line coordinates introduced below.

We write \(B(x,r)\) for the open ball with centre \(x\) and radius \(r\),
and \(\supp\omega\) for the support of a measure \(\omega\).
If \(T:X\to Y\) is Borel, \(T_\#\omega\) denotes the pushforward of
\(\omega\). For probabilities \(\mu,\nu\) on \(\R^2\), set
\begin{equation}\label{eq:lambda-definition}
 T_g(x,y)=gx-y,\qquad
 \lambda_g=(T_g)_\#(\mu\otimes\nu)
           =(g_\#\mu)*\widetilde\nu,
 \qquad \widetilde\nu(E)=\nu(-E).
\end{equation}
A compactly supported probability \(\omega\) is called
\(t\)-Frostman if
\begin{equation}\label{eq:frostman}
 \omega(B(x,r))\le C_\omega r^{t}
\end{equation}
for \(x\in\R^2,\ 0<r\le1\).
For \(e\in S^1\), let \(e^\perp\) denote its counterclockwise quarter-turn.

The orientation-preserving component of \(\OO\) is \(\SO\), whose elements are
\[
 R_\theta=
 \begin{pmatrix}\cos\theta&-\sin\theta\\
                 \sin\theta& \cos\theta\end{pmatrix}.
\]
We also write $e_\theta=(\cos\theta,\sin\theta)$.
Our Fourier transform is
\[
 \widehat\omega(\xi)=\int e^{-2\pi i x\cdot\xi}\dd\omega(x),
\]
and \(\sigma\) denotes normalized arclength measure on \(S^1\). For
\(r>0\), define
\begin{equation}\label{eq:Er-Sr}
 E_rf(x)=\int_{S^1}e^{2\pi irx\cdot e}f(e)\dd\sigma(e),
 \qquad
 S_\omega(r)=\int_{S^1}|\widehat\omega(re)|^2\dd\sigma(e).
\end{equation}
We fix a smooth function \(\phi_0:[0,\infty)\to[0,1]\) which is equal to
one on \([0,1]\) and vanishes on \([2,\infty)\), and set
\(\phi(s)=\phi_0(s)-\phi_0(2s)\). Then
\begin{equation}\label{eq:dyadic-resolution}
 \phi_0(|\xi|)+\sum_{j=1}^{\infty}\phi(|\xi|/2^j)=1.
\end{equation}
For dyadic \(R\ge2\), \(P_R\) denotes the Fourier multiplier with symbol
\(\phi(|\xi|/R)\).
We write \(X\lesssim Y\) or \(Y\gtrsim X\) if \(X\le CY\), and
\(X\sim Y\) if both inequalities hold. These constants may depend on
fixed measures, Frostman constants, exponents and cutoff functions, but
not on the frequency or spatial scale being summed.

We shall use the following compact form of Frostman's lemma; see
\cite[Chapter~8]{Mattila}. If \(E\subset\R^n\) is Borel and
\(0<t<\dimH E\), then a compact subset of \(E\) supports a
\(t\)-Frostman probability. We shall also need the following covering
lemma. Dyadic squares are half-open, and \(\operatorname{side}(Q)\) denotes their side
length.

\begin{lemma}\label{fd:lem:cover}
Let \(d>0\), let \(K\subset\R^2\) be bounded and let \(S\subset K\) with
\(\dimH S<d\). For every integer \(j_{0}\ge1\), there is a countable
family $\mathcal Q$ of dyadic squares meeting $K$ such that
\[
 S\subset\bigcup_{Q\in\mathcal Q}Q,\qquad
 \operatorname{side}(Q)\le2^{-j_{0}},\qquad
 \sum_{Q\in\mathcal Q}\operatorname{side}(Q)^d\le1.
\]
At each scale \(2^{-j}\), the family therefore contains at most
$\lfloor2^{jd}\rfloor$ squares.
\end{lemma}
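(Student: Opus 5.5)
The plan is to use the definition of Hausdorff dimension through Hausdorff content and then replace an arbitrary cover by a dyadic one. Since \(\dimH S<d\), we have \(\HH^d(S)=0\). Hence for every \(\delta>0\) the \(\delta\)-content vanishes: for any \(\varepsilon>0\) there is a countable cover of \(S\) by sets \(U_i\) with \(\operatorname{diam}U_i<\delta\) and \(\sum_i(\operatorname{diam}U_i)^d<\varepsilon\). We may discard any \(U_i\) that misses \(S\), so each remaining \(U_i\) meets \(S\subset K\).

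Next we convert each \(U_i\) into dyadic squares. If \(\operatorname{diam}U_i=0\), then \(U_i\) is a single point, and we replace it by a tiny set of positive diameter \(r_i\) with \(\sum_i r_i^d\) as small as we like. So we may assume \(\rho_i=\operatorname{diam}U_i>0\). Choose the integer \(k_i\) with \(2^{-k_i-1}<\rho_i\le2^{-k_i}\). The set \(U_i\) lies in a closed square of side \(\rho_i\le2^{-k_i}\), which meets at most four dyadic squares of side \(2^{-k_i}\). Keep only those squares that meet \(U_i\); each of them meets \(S\subset K\). These at most four squares cover \(U_i\), and their total contribution is \(4\cdot2^{-k_id}\le4\cdot2^{d}\rho_i^d\). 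Take \(\delta\le2^{-j_0}\), so that every side \(2^{-k_i}<2\delta\le2^{-j_0+1}\). To get side at most \(2^{-j_0}\), we instead choose \(\delta=2^{-j_0-1}\); then \(2^{-k_i}<2\rho_i<2^{-j_0}\).

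Let \(\mathcal Q\) be the union of all these squares, with repetitions removed. Then \(S\subset\bigcup\mathcal Q\), every square meets \(K\), every side is at most \(2^{-j_0}\), and
\[
\sum_{Q\in\mathcal Q}\operatorname{side}(Q)^d\le 4\cdot2^{d}\Bigl(\sum_i\rho_i^d\Bigr)+4\cdot 2^d\sum_i r_i^d\le 4\cdot2^{d}\cdot2\varepsilon .
\]
Choosing \(\varepsilon=2^{-d-3}\) makes the right-hand side at most \(1\). The counting statement is immediate: if \(N_j\) squares have side \(2^{-j}\), then \(N_j2^{-jd}\le1\), so \(N_j\le2^{jd}\), and since \(N_j\) is an integer, \(N_j\le\lfloor2^{jd}\rfloor\). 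Countability holds because the cover is a countable union of finite families.

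The only step that needs any care is the bookkeeping of constants and the degenerate zero-diameter sets. The dyadic reduction itself is routine, and boundedness of \(K\) is not even needed beyond the requirement that the squares meet \(K\).
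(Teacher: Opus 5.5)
Your argument is essentially the paper's. The paper covers $S$ by balls of radii $r_i\le 2^{-j_0-1}$ with $\sum_i r_i^d\le(16\cdot 2^d)^{-1}$ and replaces each ball by at most $16$ dyadic squares of side between $r_i$ and $2r_i$. You instead use the arbitrary sets from the definition of $\HH^d$ and at most $4$ squares each; the constants and the counting step work out the same way.

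There is one small slip. A dyadic square meeting $U_i$ need not meet $S$, or even $K$, so ``each of them meets $S\subset K$'' does not follow. Discard instead the squares that miss $S$, as the paper does. This still covers $S$ and only decreases $\sum\operatorname{side}(Q)^d$.
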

\begin{proof}
Since $\HH^d(S)=0$, we may cover $S$ by balls of radii
$r_i\le2^{-j_0-1}$ with
\[
 \sum_i r_i^d\le(16\cdot2^d)^{-1}.
\]
Each ball is covered by at most $16$ dyadic squares whose side length is
between $r_i$ and $2r_i$. We discard squares that do not meet $S$ and
remove repetitions. The resulting family $\mathcal Q$ satisfies
\[
 \operatorname{side}(Q)\le2^{-j_0},\qquad
 \sum_{Q\in\mathcal Q}\operatorname{side}(Q)^d
 \le16\cdot2^d\sum_i r_i^d\le1.
\]
Thus,
\[
 |\{Q\in\mathcal Q:\operatorname{side}(Q)=2^{-j}\}|\,2^{-jd}
 \le\sum_{Q\in\mathcal Q}\operatorname{side}(Q)^d\le1.
\]
This completes the proof.
\end{proof}

We next express rotations in coordinates in which the differences lie on
lines. Set
\[
 \mathsf J=\begin{pmatrix}0&-1\\1&0\end{pmatrix},\qquad
 \mathscr L(a,b)=(p,v):=(a-b,\mathsf J(a+b)).
\]
The map $\mathscr L:\R^4\to\R^4$ is invertible, with
\begin{equation}\label{fd:eq:inverse}
 a=\frac{p-\mathsf Jv}{2},\qquad b=\frac{-p-\mathsf Jv}{2}.
\end{equation}
For the probabilities \(\mu\) and \(\nu\), set
\begin{equation}\label{fd:eq:eta}
 \eta=\mathscr L_\#(\mu\otimes\nu).
\end{equation}

\begin{lemma}\label{fd:lem:cayley}
For $u\in\R$ and $\theta=2\arctan u$,
\begin{equation}\label{fd:eq:cayley}
 (\operatorname{Id}-u\mathsf J)(R_\theta a-b)=p+uv.
\end{equation}
On every bounded interval of $u$, the change of coordinates
\[
 (\theta,z)\longmapsto\bigl(u,(\operatorname{Id}-u\mathsf J)z\bigr)
\]
is smooth and invertible. Both $d\theta/du$ and its spatial Jacobian
are bounded above and below by positive constants.
\end{lemma}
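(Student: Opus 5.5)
The identity \eqref{fd:eq:cayley} is purely algebraic, so I will verify it by direct computation, using that $\mathsf J^2=-\operatorname{Id}$ and that $\mathsf J$ commutes with every rotation. The Cayley parametrization gives
\[
 R_\theta=(\operatorname{Id}-u\mathsf J)^{-1}(\operatorname{Id}+u\mathsf J),
\]
since $\operatorname{Id}+u\mathsf J$ acts as multiplication by $1+iu$ under the identification $\R^2\cong\mathbb C$. Its argument is $\arctan u=\theta/2$, and $(1+iu)/(1-iu)=e^{i\theta}$. Multiplying $R_\theta a-b$ by $\operatorname{Id}-u\mathsf J$ therefore gives $(\operatorname{Id}+u\mathsf J)a-(\operatorname{Id}-u\mathsf J)b$, which equals
\[
 (a-b)+u\mathsf J(a+b)=p+uv
\]
by the definition of $\mathscr L$. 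I would present this via the complex multiplication to avoid matrix bookkeeping.

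For the change of coordinates I will treat the two components separately. The map $u\mapsto\theta=2\arctan u$ is a smooth diffeomorphism of $\R$ onto $(-\pi,\pi)$ with $d\theta/du=2/(1+u^2)$. On $|u|\le U$ this lies in $[2/(1+U^2),2]$, so it is bounded above and below. For fixed $u$ the spatial map $z\mapsto(\operatorname{Id}-u\mathsf J)z$ is multiplication by $1-iu$, that is, $\sqrt{1+u^2}$ times a rotation. Its Jacobian determinant is therefore $1+u^2\in[1,1+U^2]$. The full map $(\theta,z)\mapsto(u,(\operatorname{Id}-u\mathsf J)z)$ is triangular: the first coordinate depends only on $\theta$, and the second is linear in $z$ with coefficients smooth in $u=\tan(\theta/2)$. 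Its inverse is $(u,w)\mapsto(2\arctan u,(\operatorname{Id}-u\mathsf J)^{-1}w)$, which is again smooth. The total Jacobian is $(d u/d\theta)(1+u^2)$, and this is bounded above and below on the relevant compact range.

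There is no real obstacle here. The only point needing care is that the parametrization covers $\SO$ minus the single point $\theta=\pi$, and the bounds hold only on bounded $u$-intervals. I would remark that the complementary arc near $\theta=\pi$ is handled by the same argument applied to $R_{\theta+\pi}=-R_\theta$, that is, by replacing $a$ with $-a$. The reflection component of $\OO$ reduces to $\SO$ by replacing $\mu$ with its image under a fixed reflection. Since the lemma only claims bounds on bounded intervals of $u$, these remarks are optional.
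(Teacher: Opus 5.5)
Your proof is correct and is essentially the paper's argument. Both rest on the Cayley identity $(\operatorname{Id}-u\mathsf J)R_{2\arctan u}=\operatorname{Id}+u\mathsf J$, followed by $\det(\operatorname{Id}-u\mathsf J)=1+u^2$ and $d\theta/du=2/(1+u^2)$; you verify the identity through complex multiplication by $(1+iu)/(1-iu)=e^{i\theta}$, whereas the paper uses the matrix Cayley formula and $\mathsf J^2=-\operatorname{Id}$.
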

\begin{proof}
The Cayley formula reads
\[
 R_{2\arctan u}
 =\frac{(1-u^2)\operatorname{Id}+2u\mathsf J}{1+u^2}.
\]
Since $\mathsf J^2=-\operatorname{Id}$, multiplication gives
$(\operatorname{Id}-u\mathsf J)R_{2\arctan u}=\operatorname{Id}+u\mathsf J$. Thus,
\[
 (\operatorname{Id}-u\mathsf J)(R_\theta a-b)
 =a-b+u\mathsf J(a+b)=p+uv.
\]
The remaining bounds follow from
\[
 \det(\operatorname{Id}-u\mathsf J)=1+u^2,
 \qquad \frac{d\theta}{du}=\frac{2}{1+u^2}.
\]
This completes the proof.
\end{proof}

For the geometric estimates we may assume that
\begin{equation}\label{fd:eq:normalization}
 \supp\mu,\supp\nu\subset B(0,10^{-3}).
\end{equation}
A common dilation gives this normalization without changing the Frostman
exponents. It only dilates every rotated difference by the same factor.
Set
\[
 \mathcal P=\mathscr L(\supp\mu\times\supp\nu).
\]
Let
\[
 I_0=[-1/4,1/4],\qquad P_u(p,v)=p+uv,\qquad
 \lambda'_u=(P_u)_\#\eta.
\]
Each parameter $(p,v)\in\mathcal {P}$ determines a line in $\R\times\R^2$ and a unit
segment on that line:
\begin{align}
 \ell_{p,v}&=\{(u,p+uv):u\in\R\},\label{fd:eq:line}\\
 \ell^0_{p,v}
 &=\left\{(u,p+uv):|u|\le\frac{1}{2\sqrt{1+|v|^2}}\right\}.
 \label{fd:eq:segment}
\end{align}
The segment is centred at $(0,p)$ and contains
$\{(u,p+uv):|u|\le1/3\}$. Its sufficiently thin neighbourhood is contained in the unit
ball of $\R^3$.
The relation between these line families and restricted projections
is also discussed in \cite[Section~6.7]{Mattila2017}.

Let $\varphi\in C_c^\infty(B(0,1))$ be nonnegative with
$\int_{\R^2}\varphi=1$, and define
\begin{equation}\label{fd:eq:Fdelta}
 F_\delta(u,x)
 =\int\varphi_\delta(x-p-uv)\dd\eta(p,v),
 \qquad \varphi_\delta(x)=\delta^{-2}\varphi(x/\delta).
\end{equation}
Thus, $F_\delta(u,\cdot)=\varphi_\delta*\lambda'_u$ and
\begin{equation}\label{fd:eq:totalmass}
 \int_{I_0}\int_{\R^2}F_\delta(u,x)\dd x\dd u=|I_0|.
\end{equation}
Notice that $F_{\delta}$ is smoothed only in the two spatial variables.
Neighbourhoods of subsets of $\R^3$ are Euclidean.

For a rectangular prism $W\subset\R^3$, define
\[
 \mathcal A(W)=\{(p,v):\ell^0_{p,v}\subset W\}.
\]
There is no restriction on the orientation of $W$.

\begin{lemma}\label{fd:lem:localization}
Suppose \(W\) has dimensions \(r_1\times r_2\times h\), where
\(0<r_1\le r_2\le8\) and \(h\le8\). Then
\(\mathcal A(W)\cap\mathcal P\) has diameter at most \(Cr_2\). In
particular, its preimage under \(\mathscr L\)
is contained in a product of two planar balls of radius $Cr_2$.
\end{lemma}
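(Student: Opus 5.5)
The plan is to compare two segments lying in the same prism. Every parameter \((p,v)\in\mathcal P\) obeys \(|p|\le 2\cdot10^{-3}\) and \(|v|\le2\cdot10^{-3}\), by \eqref{fd:eq:normalization}. Hence each segment \(\ell^0_{p,v}\) is almost parallel to the \(u\)-axis and has length close to \(1\); in particular it contains \(\{(u,p+uv):|u|\le1/3\}\).

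Fix \((p,v),(p',v')\in\mathcal A(W)\cap\mathcal P\). Since \(r_1\le r_2\), the prism \(W\) lies in the \(r_2\)-neighbourhood of a segment \(\sigma_W\) of length \(h\) along its long axis. Both segments lie in \(W\), so every point of either one is within \(2r_2\) of \(\sigma_W\). Because \(\ell^0_{p,v}\) has length about \(2/3\) and points nearly along \(e_u=(1,0,0)\), the core \(\sigma_W\) must have length at least about \(2/3-4r_2\).

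First assume \(r_2\le c\) for a small absolute constant \(c\). Then the direction of \(\sigma_W\) is within \(O(r_2)\) of the direction \((1,v)/\sqrt{1+|v|^2}\). The same holds for \((1,v')/\sqrt{1+|v'|^2}\), so \(|v-v'|\lesssim r_2\). To compare positions, evaluate at \(u=0\) and \(u=\pm1/3\). The points \((0,p)\) and \((0,p')\) both lie within \(2r_2\) of \(\sigma_W\). The same is true of \((1/3,p+v/3)\) and \((1/3,p'+v'/3)\).

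To make this quantitative, parametrize \(\sigma_W\) as \(\{(u,q+uw)\}\) over a \(u\)-interval containing \([-1/3,1/3]\). This is possible because \(\sigma_W\) is nearly horizontal. The distance condition then gives \(|p+uv-(q+uw)|\lesssim r_2\) for \(u\in\{-1/3,0,1/3\}\). It follows that \(|p-q|\lesssim r_2\) and \(|v-w|\lesssim r_2\), and likewise for \((p',v')\). Hence \(|(p,v)-(p',v')|\lesssim r_2\).

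If instead \(r_2>c\), the bound is trivial: \(\mathcal P\) has diameter \(O(1)\le Cr_2\) once \(C\) is large.

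For the second claim, use \eqref{fd:eq:inverse}. The map \(\mathscr L^{-1}\) is linear, so it sends a set of diameter \(Cr_2\) to a set of diameter \(C'r_2\). The coordinates \(a\) and \(b\) of the preimage therefore each range over sets of diameter at most \(C'r_2\). These sets lie in planar balls of radius \(C'r_2\), and the preimage is contained in their product.

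The main obstacle is the geometric step that turns "both segments lie in a thin prism" into closeness of the line parameters, without assuming anything about how \(W\) is oriented. I would handle it with the uniform parametrization by \(u\) described above. This is where the normalization \eqref{fd:eq:normalization} is used: it makes the lines uniformly non-vertical, which justifies the parametrization and makes the comparison constants absolute.
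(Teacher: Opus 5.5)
Your proposal is correct and follows essentially the paper's argument. You dispose of the case $r_2$ bounded below, use the smallness of $|v|$ to parametrize the prism's long axis as $(u,q+uw)$, and evaluate the $O(r_2)$ closeness at $u=\pm1/3$ to get $|p-q|+|v-w|\lesssim r_2$ for each contained segment; this is exactly what the paper does with $(p_0,v_0)$. Your preliminary direction comparison $|v-v'|\lesssim r_2$ is redundant, and $\ell^0_{p,v}$ is a unit segment (only its part with $|u|\le1/3$ has length about $2/3$), but neither point affects the proof.
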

\begin{proof}
If $r_2$ is bounded below, the diameter bound follows from the boundedness
of $\mathcal {P}$. We may therefore assume that $r_2$ is small and
$\mathcal A(W)\cap\mathcal P$ is nonempty. Let $(p,v)$ belong to this
intersection. The points of its segment with $u=-1/3$ and $u=1/3$ are
at distance at least $2/3$ from each other. Thus,
\[
 h^2+r_1^2+r_2^2\ge4/9.
\]
Since $r_2$ is small, $h$ is bounded below and is the longest side of $W$.
We call the line through the centre of $W$ parallel to that side its
longitudinal axis. The components of the difference of the two segment
points perpendicular to this axis have magnitude at most $Cr_2$. Hence, the axis makes an angle $O(r_2)$ with the segment.
Since $|v|$ is small, the $u$-component of the axis is bounded away from zero, so we can write the axis as $(u,p_0+uv_0)$.

Every point $(u,x)\in W$ is within $Cr_2$ of an axis point
$(u',p_0+u'v_0)$. Since $|v_0|$ is bounded,
\[
 |x-p_0-uv_0|
 \le |x-p_0-u'v_0|+|u'-u|\,|v_0|\le Cr_2.
\]
Applying this at $u=\pm1/3$ on each contained segment gives
\[
 |p-p_0+(v-v_0)/3|+|p-p_0-(v-v_0)/3|\le Cr_2.
\]
Adding and subtracting gives $|p-p_0|+|v-v_0|\le Cr_2$.
The preimage bound follows from \eqref{fd:eq:inverse}. This completes
the proof.
\end{proof}

\begin{proposition}\label{fd:prop:product}
Let $\mu,\nu$ satisfy \eqref{fd:eq:normalization} and the Frostman conditions with exponents $\alpha,\beta\in(0,2]$. For every rectangular prism as in
Lemma~\ref{fd:lem:localization},
\begin{equation}\label{fd:eq:anisotropic}
 \eta(\mathcal A(W))
 \le C\min\{r_1^\alpha r_2^\beta,
             r_1^\beta r_2^\alpha\}.
\end{equation}
Set $s=\min\{\alpha+\beta,2\}$. For $0<\rho\le\min\{r_1,1\}$,
\begin{equation}\label{fd:eq:prism-bound}
 \eta(\mathcal A(W))\le C\rho^{s-2}r_1r_2.
\end{equation}
Moreover,
\begin{equation}\label{fd:eq:parameter-ball}
 \eta(B_{\R^4}(z,r))\le Cr^s
\end{equation}
for $z\in\R^{4},\ 0<r\le1$. All these bounds also hold for any positive
measure $\eta_0\le\eta$.
\end{proposition}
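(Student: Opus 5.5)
The plan is to make the localization in Lemma~\ref{fd:lem:localization} anisotropic, by looking at how the segment $\ell^0_{p,v}$ must sit inside $W$, and then to read off the three bounds in turn.

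\emph{Anisotropic localization.} As in the proof of Lemma~\ref{fd:lem:localization}, we reduce to the case of small $r_2$ with $\mathcal A(W)\cap\mathcal P\ne\varnothing$; otherwise boundedness of $\mathcal P$ and $\eta(\R^4)=1$ give \eqref{fd:eq:anisotropic} trivially, since $r_1,r_2\le8$. We write the axis of $W$ as $(u,p_0+uv_0)$. The cross-section of $W$ at height $u$ is then, up to constants, an $r_1\times r_2$ rectangle in the $x$-plane. Its short direction is some unit vector $e(u)$, obtained by projecting the short side of $W$; $e(u)$ varies only by $O(r_2)$ over $|u|\le1/3$, since the axis is almost aligned with the segment. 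Containment of the points at $u=\pm1/3$ then forces
\[
|(p-p_0)\cdot e|\lesssim r_1,\quad |(v-v_0)\cdot e|\lesssim r_1,\quad |(p-p_0)\cdot e^\perp|,\ |(v-v_0)\cdot e^\perp|\lesssim r_2,
\]
for a fixed unit vector $e$, up to harmless $O(r_2)$ errors in the direction. Through \eqref{fd:eq:inverse}, $a$ and $b$ are linear combinations of $p$ and $\mathsf Jv$. The constraints say that $p-p_0$ lies in an $r_1\times r_2$ rectangle with short side $e$, and that $\mathsf J(v-v_0)$ lies in one with short side $\mathsf Je=e^\perp$. Hence $a=(p-\mathsf Jv)/2$ and $b$ each lie in a set of width $\lesssim r_1$ in both directions $e$ and $e^\perp$?

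This is the main obstacle. The naive reading gives only $r_2$-balls, which recovers the isotropic bound $r_2^{\alpha+\beta}$ and not \eqref{fd:eq:anisotropic}. The point must be that $a$ and $b$ are coupled. I would exploit the fact that $\mathcal A(W)$ is contained in $\{(a,b): a\in B(a_0,Cr_2),\ b\in a\text{-dependent ball } B(\psi(a),Cr_1)\}$. Concretely, the projection of $p+uv$ onto $e$ at the two endpoints gives two scalar constraints of size $r_1$, and these determine the $e$-components of both $a$ and $b$ in terms of each other. I expect, though I would verify it first by a direct computation in the Cayley coordinates, that fixing $a$ pins $b$ to an $r_1\times r_2$ rectangle, or better to an $r_1$-ball once the $e^\perp$ constraint is combined with the coupling. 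Granting that $b$ lies in a ball of radius $Cr_1$ for fixed $a\in B(a_0,Cr_2)$, Fubini gives
\[
\eta(\mathcal A(W))\le\int_{B(a_0,Cr_2)}\nu(B(\psi(a),Cr_1))\,d\mu(a)\lesssim r_2^\alpha r_1^\beta .
\]
Swapping the roles of $a$ and $b$ gives $r_1^\alpha r_2^\beta$, and \eqref{fd:eq:anisotropic} follows. If the coupling only yields a rectangle, I would instead cover it by $r_2/r_1$ balls of radius $r_1$. That loses a factor, so the first attempt is to find the exact coupling.

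\emph{Deducing \eqref{fd:eq:prism-bound}.} If $\alpha+\beta\le2$, then $s=\alpha+\beta$. Take the minimum with the larger exponent placed on $r_2$: $r_1^{\min}r_2^{\max}$. Since $\alpha+\beta-2\le0$ and $\rho\le r_1\le r_2$, we get
\[
r_1^{\min}r_2^{\max}=r_1r_2\,r_1^{\min-1}r_2^{\max-1}\le r_1r_2\,\rho^{s-2},
\]
as both exponents are $\le1$ (or we move the negative part onto $r_1\ge\rho$). If $\alpha+\beta>2$, we use Frostman with reduced exponents $\alpha'\le\alpha$, $\beta'\le\beta$ with $\alpha'+\beta'=2$ and both $\le1$... but when $\alpha>1$ we take $\alpha'=1,\beta'=1$. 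Then $r_1^{\alpha'}r_2^{\beta'}=r_1r_2=\rho^0r_1r_2$.

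\emph{Deducing \eqref{fd:eq:parameter-ball}.} Since $\mathscr L$ is bi-Lipschitz, $\eta(B(z,r))\le\mu(B(\cdot,Cr))\,\nu(B(\cdot,Cr))\lesssim r^{\alpha+\beta}\le r^s$.

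Finally, every bound is monotone in the measure, so it transfers to any $\eta_0\le\eta$.
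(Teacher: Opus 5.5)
Your architecture matches the paper's: localize $a$ to a $Cr_2$-ball, show that for fixed $a$ the point $b$ lies in a $Cr_1$-ball, then apply Fubini and Frostman. However, the step that carries the proposition is only granted, and the mechanism you sketch for it is not the right one. The missing computation is short. Let $(n_0,n)$ be the unit normal to the pair of faces of $W$ at distance $r_1$, so that $W\subset\{|n_0u+n\cdot x-c|\le r_1/2\}$. Evaluating at $u=\pm1/3$ on a contained segment gives $|n\cdot p-c|\lesssim r_1$ and $|n_0+n\cdot v|\lesssim r_1$. These are exactly your two $e$-constraints, with $e=n/|n|$. Since $|v|\le 2\cdot10^{-3}$, the second forces $|n|\ge c_0$ for small $r_1$. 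Now substitute $p=a-b$ and $v=\mathsf J(a+b)$. The first constraint controls $n\cdot(a-b)$. The second controls $n\cdot\mathsf J(a+b)=-(\mathsf Jn)\cdot(a+b)$, which is the $e^\perp$-component of $a+b$, not an $e$-component. This rotation is what supplies the second independent direction. For fixed $a$ you therefore know $b\cdot n$ and $b\cdot\mathsf Jn$ up to $O(r_1)$. Since $n\perp\mathsf Jn$ and $|\mathsf Jn|=|n|\ge c_0$, $b$ lies in a ball of radius $Cr_1$; no $r_2$-constraints or axis parametrization are needed. Your opening reduction is also wrong. If $r_2\gtrsim1$ but $r_1\to0$, \eqref{fd:eq:anisotropic} demands $\eta(\mathcal A(W))\lesssim r_1^{\max\{\alpha,\beta\}}$, which is not trivial. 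This is precisely the regime of prisms with widths $\rho$ and $1$ (lines concentrating near a plane) that the product structure must control. There your axis picture is unavailable, because contained segments need not be nearly parallel to the side of length $h$. The slab argument works for every $r_2$; only the case of $r_1$ bounded below is trivial.

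The deduction of \eqref{fd:eq:prism-bound} uses the wrong term. Since $r_1\le r_2$, the smaller product in \eqref{fd:eq:anisotropic} is $r_1^{\max\{\alpha,\beta\}}r_2^{\min\{\alpha,\beta\}}$, with the larger exponent on the smaller width. With your choice $r_1^{\min\{\alpha,\beta\}}r_2^{\max\{\alpha,\beta\}}$, the claimed inequality is false when $\alpha+\beta\le2$ and $\max\{\alpha,\beta\}>1$. For example, $\alpha=3/2$, $\beta=3/10$, $r_1=\rho$, $r_2=1$ gives $\rho^{3/10}$ against $\rho^{s-2}r_1r_2=\rho^{4/5}$, and moving the negative power onto $r_1\ge\rho$ does not help. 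With $\alpha\ge\beta$ and $x_i=r_i/\rho$, the correct step for $\alpha>1$ is $x_1^{\alpha-1}x_2^{\beta-1}\le x_2^{\alpha+\beta-2}\le1$. For $\alpha+\beta>2$, taking $\alpha'=\beta'=1$ is impossible when $\beta<1$ (e.g.\ $\alpha=19/10$, $\beta=1/2$). Use instead $r_1^\alpha r_2^\beta/(r_1r_2)=(r_1/r_2)^{\alpha-1}r_2^{\alpha+\beta-2}\lesssim1$, which is valid since $\alpha\ge1$ and $r_2\le8$. The parameter-ball bound and the passage to $\eta_0\le\eta$ are fine.
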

\begin{proof}
We shall evaluate the constraint from the narrowest slab of the prism
at two points of each segment. This gives two independent constraints
on each point of the pair. After fixing one point, these
constraints place the other in a ball of radius $Cr_1$.

By Lemma~\ref{fd:lem:localization}, there are planar balls $B_1,B_2$ of
radius $Cr_2$ such that
\[
 \mathscr L^{-1}(\mathcal A(W)\cap\mathcal P)\subset B_{1}\times B_{2}.
\]

Let $(n_0,n)\in\R\times\R^2$ be a unit normal to the pair of faces
separated by $r_1$. The corresponding slab can be written as
\[
 |n_0u+n\cdot x-c|\le r_1/2.
\]
Substituting $x=p+uv$ at $u=\pm1/3$ yields
\begin{equation}\label{fd:eq:slab-constraints}
 |n\cdot p-c|\le Cr_1,
 \qquad |n_0+n\cdot v|\le Cr_1.
\end{equation}
For sufficiently small $r_1$, the second inequality and the small bound
on $|v|$ imply \(|n|\ge c_0>0\). Otherwise \(|n_0|\) would be close
to one, contradicting that inequality.

For a fixed $a$, inserting $p=a-b$ and $v=\mathsf J(a+b)$ into
\eqref{fd:eq:slab-constraints} gives
\[
 |n\cdot a-n\cdot b-c|\le Cr_1,\qquad
 |n\cdot\mathsf Jb+(n_0+n\cdot\mathsf Ja)|\le Cr_1.
\]
The coefficient matrix is
\[
 M_n=\begin{pmatrix}-n^{\mathsf T}\\ n^{\mathsf T}\mathsf J\end{pmatrix}.
\]
Its two rows are perpendicular and have norm $|n|$, so
$|\det M_n|=|n|^2\ge c_0^2$ and its inverse has uniformly bounded norm.
For this fixed $a$, all admissible $b$ therefore lie in a ball of radius
$Cr_1$. Integrating the $\beta$-Frostman estimate over the ball containing
the \(a\)-coordinates, of radius $Cr_2$, gives
\[
 \eta(\mathcal A(W))\le Cr_1^\beta r_2^\alpha.
\]
Fixing $b$ instead and using the two independent constraints on $a$
gives $Cr_1^\alpha r_2^\beta$. When $r_1$ is bounded below, both estimates
follow from total mass one after changing the constant. This proves
\eqref{fd:eq:anisotropic}.

To prove \eqref{fd:eq:prism-bound}, we may assume that $\alpha\ge\beta$,
since \eqref{fd:eq:anisotropic} is symmetric in the exponents.
Suppose first that $\alpha+\beta\le2$, so $s=\alpha+\beta$.
Let $x_1=r_1/\rho$ and $x_2=r_2/\rho$. Then $1\le x_1\le x_2$ and
\[
 \rho^{-s}r_1^\alpha r_2^\beta=x_1^\alpha x_2^\beta.
\]
If $\alpha\le1$, then $\beta\le1$ and this is at most $x_1x_2$.
If $\alpha>1$, then
\[
 \frac{x_1^\alpha x_2^\beta}{x_1x_2}
 =x_1^{\alpha-1}x_2^{\beta-1}
 \le x_2^{\alpha+\beta-2}\le1.
\]
Thus, \eqref{fd:eq:anisotropic} gives \eqref{fd:eq:prism-bound}.

If $\alpha+\beta\ge2$, then $s=2$ and $\alpha\ge1$. For
$0<r_1\le r_2\le1$,
\[
 \frac{r_1^\alpha r_2^\beta}{r_1r_2}
 =\left(\frac{r_1}{r_2}\right)^{\alpha-1}
   r_2^{\alpha+\beta-2}\le1.
\]
For $r_2\le8$, the same expression is bounded by a fixed constant.
This proves \eqref{fd:eq:prism-bound} also in this case.
The preimage of a parameter ball under $\mathscr L$ is contained in a
product of two balls of radius $Cr$. Hence, its product mass is at most
$Cr^{\alpha+\beta}\le Cr^s$ for $r\le1$, proving
\eqref{fd:eq:parameter-ball}. Every positive measure $\eta_0\le\eta$
inherits the same upper bounds. This completes the proof.
\end{proof}

A $\rho$-tube is the closed $\rho$-neighbourhood of a unit line segment
in $\R^3$; its volume is comparable to $\rho^2$. A family of such tubes
is \emph{essentially distinct} if
\[
 |T\cap T'|\le\tfrac12\max\{|T|,|T'|\}
\]
for distinct tubes $T,T'$. A \emph{shading} of a tube \(T\) is a
measurable set \(Y(T)\subset T\).

We use the following theorem of Wang and Zahl, in the preprint version
\cite[Theorem~1.2]{WZ}.

\begin{theorem}[Wang--Zahl]\label{fd:thm:WZ}
For every $\varepsilon>0$, there exist $K_{\rm WZ}>1$ and $\rho_\varepsilon>0$
with the following property. Let $0<\rho<\rho_\varepsilon$, and let
$\cT$ be a finite, essentially distinct family of $\rho$-tubes contained
in the unit ball of $\R^3$. Suppose that every rectangular prism of
dimensions $r_1\times r_2\times2$ contains at most
$100r_1r_2\rho^{-2}$ tubes of $\cT$. If $0<\lambda\le1$ and
$|Y(T)|\ge\lambda|T|$ for every $T\in\cT$, then
\begin{equation}\label{fd:eq:WZ}
 \left|\bigcup_{T\in\cT}Y(T)\right|
 \ge\rho^\varepsilon\lambda^{K_{\rm WZ}}\sum_{T\in\cT}|T|.
\end{equation}
\end{theorem}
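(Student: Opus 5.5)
The statement is Theorem~1.2 of Wang and Zahl \cite{WZ}, rewritten in the form used here. So the plan is not to reprove the Kakeya-type estimate. Instead I would take their theorem and check that our hypotheses imply theirs and that their conclusion implies \eqref{fd:eq:WZ}. I expect the only real content to be this dictionary between the two formulations.

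First I would recall their setting. They consider a family $\cT$ of essentially distinct $\rho$-tubes in the unit ball of $\R^3$ satisfying a Katz--Tao convex Wolff axiom: every convex set $W\subset\R^3$ contains at most $C_{\mathcal K}|W|/|T|$ tubes of $\cT$. Their shadings satisfy $|Y(T)|\ge\lambda|T|$. Their conclusion is
\[
 \Bigl|\bigcup_{T\in\cT} Y(T)\Bigr|\ge c_\varepsilon\,\rho^{\varepsilon}\lambda^{K}C_{\mathcal K}^{-K'}\sum_{T\in\cT}|T|,
\]
for constants $K,K'$ depending on $\varepsilon$. The version with an implicit constant $c_\varepsilon$ and the version with a threshold $\rho_\varepsilon$ are equivalent: shrink $\varepsilon$ to $\varepsilon/2$ and choose $\rho_\varepsilon$ so small that $c_{\varepsilon/2}\rho^{-\varepsilon/2}\ge1$. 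Doing so also absorbs the power of $C_{\mathcal K}$, once I show that $C_{\mathcal K}$ is an absolute constant.

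The main step is to show that our prism hypothesis implies the convex Wolff axiom with an absolute constant. Let $W$ be a convex set containing at least one tube; otherwise the count is zero and there is nothing to prove.

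\begin{itemize}
\item By John's theorem, $W$ is contained in a box $W'$ with $|W'|\lesssim|W|$. This box has side lengths $a_1\le a_2\le a_3$.
\item Since $W$ contains a tube of length one and radius $\rho$, we have $a_3\gtrsim1$ and $a_1\gtrsim\rho$.
\item Since the tubes lie in the unit ball, we may intersect with a bounded region and assume $a_3\le 2$ after enlarging by a constant.
\item Hence $W'$ lies in a prism of dimensions $r_1\times r_2\times2$ with $r_i\sim a_i$.
\end{itemize}

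Our hypothesis then gives at most $100r_1r_2\rho^{-2}\lesssim|W|/|T|$ tubes, using $|T|\sim\rho^2$. Their axiom follows with an absolute constant. I would also check that our notion of essential distinctness, $|T\cap T'|\le\tfrac12\max\{|T|,|T'|\}$, matches or implies theirs, possibly after passing to a subfamily. Their notion may be phrased as tubes not being $\rho$-comparable. Under either notion, any $\rho$-tube meets $O(1)$ tubes that are not essentially distinct from it. So a greedy selection loses only a constant factor in $\sum|T|$, while the union and the lower bound on the shadings are unchanged.

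The main obstacle is bookkeeping rather than mathematics. Their theorem is stated with the Katz--Tao normalization, in which the count is relative to $|W|/|T|$ and carries multiplicity one at scale $\rho$. Their $\varepsilon$-loss may depend on $\varepsilon$ only through $\rho^{\varepsilon}$ and constants. So I would carefully verify that the exponents of $C_{\mathcal K}$ and of $\lambda$ are independent of $\rho$. Then both the factor $100$ and the John-ellipsoid constants are absorbed by shrinking $\rho_\varepsilon$.
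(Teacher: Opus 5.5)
Your proposal is correct and follows the paper's route. The statement is quoted from Wang--Zahl, and the only verification needed is the one you give: John's ellipsoid theorem together with the unit-ball support turns the prism hypothesis into the Katz--Tao convex Wolff axiom with an absolute constant, and it also gives $|\cT|\lesssim\rho^{-2}$.

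The paper also records a derivation from Guth--Wang--Zahl, whose theorem carries a density threshold $\lambda\ge\rho^{\eta}$. In that version the case $\lambda<\rho^{\eta}$ is trivial: taking $K_{\rm WZ}>1+2/\eta$ gives $\rho^{\varepsilon}\lambda^{K_{\rm WZ}}\sum_{T}|T|\lesssim\rho^{\varepsilon-2+\eta(K_{\rm WZ}-1)}\lambda|T_0|\le|Y(T_0)|$ for any single tube $T_0$ and small $\rho$. This reduction is worth keeping in case the version you cite lacks the explicit $\lambda^{K}$ dependence for all $\lambda\in(0,1]$.
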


The same estimate also follows from Guth, Wang and Zahl
\cite[Theorem~1.1]{GWZ}. By John's ellipsoid theorem and the unit-ball
support, the prism hypothesis gives
\[
 \sum_{T\in\cT:T\subset K}|T|\lesssim |K|
 \quad\text{for every convex set }K,
 \qquad |\cT|\lesssim\rho^{-2}.
\]
Apply their theorem with loss $\varepsilon/2$, and let
$\eta_{\rm GWZ}>0$ be its density parameter. If
$\lambda\ge\rho^{\eta_{\rm GWZ}}$, their conclusion gives
\eqref{fd:eq:WZ} for sufficiently small $\rho$. Otherwise, choose
$K_{\rm WZ}>1+2/\eta_{\rm GWZ}$. For any $T_0\in\cT$,
\[
 \rho^\varepsilon\lambda^{K_{\rm WZ}}\sum_{T\in\cT}|T|
 \le C\rho^{\varepsilon-2+\eta_{\rm GWZ}(K_{\rm WZ}-1)}
 \lambda|T_0|
 \le |Y(T_0)|
\]
when $\rho$ is sufficiently small. This proves the estimate in the
remaining case.

We shall apply this theorem with a fixed $\varepsilon>0$, so no uniform
bound for $K_{\rm WZ}(\varepsilon)$ as $\varepsilon\downarrow0$ is needed.
The next lemma allows us to apply it to a weighted family of tubes.

Fix $s\in(0,2]$. Let $\eta_0$ be a positive measure supported on
$\mathcal P$, with mass at most one. For $0<\delta<1$, we partition
$\R^{4}$ into half-open cubes $Q_i$ of side $\delta$. In each cube of positive mass we choose
$q_i=(p_i,v_i)\in Q_i\cap\supp\eta_0$ and set $w_i=\eta_0(Q_i)$.
Let $T_i$ be the $\rho$-tube about $\ell^0_{p_i,v_i}$, where
$\rho=C_{\rm fd}\delta$. We choose $C_{\rm fd}$ large enough that
\[
 \{(u,x):u\in I_0,\ |x-p-uv|\le\delta\}\subset T_{i}
 \quad\text{for }(p,v)\in Q_{i}\cap\mathcal P.
\]
This is possible because $|p+uv-p_i-uv_{i}|\le C\delta$ for $u\in I_0$,
and these values of $u$ stay away from the segment ends.
For a set of indices $G$, let $a_G=\sum\limits_{i\in G}w_i$.

\begin{lemma}\label{fd:lem:selection}
Suppose that \eqref{fd:eq:prism-bound} and \eqref{fd:eq:parameter-ball}
hold with $\eta_0$ in place of $\eta$, for the fixed exponent $s$
and a fixed constant. If $\delta$ is sufficiently small and
$a_G\ge\delta^{s/2}$, then there is
$\mathcal S\subset G$ such that
\begin{equation}\label{fd:eq:selection-size}
 |\mathcal S|\ge
 c\,\frac{a_G\rho^{-s}}{(\log(1/\rho))^4},
\end{equation}
and the tubes $\{T_i:i\in\mathcal S\}$ satisfy all the geometric
hypotheses of Theorem~\ref{fd:thm:WZ}. The constants and the upper bound
on $\delta$ may depend on $s$ and the constant in the two assumed bounds,
but are independent of $G$ and of the measure $\eta_0$ within those bounds.
\end{lemma}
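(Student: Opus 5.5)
The plan is a dyadic pigeonholing on the weights, followed by a thinning step to make the tubes essentially distinct, and then a verification of the prism condition.

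\emph{Step 1 (pigeonhole on weights).} By \eqref{fd:eq:parameter-ball} applied to a ball of radius $\sqrt4\,\delta$ containing $Q_i$, we have $w_i\le C\delta^s$ for every $i$. Indices with $w_i\le\delta^{10}$ carry total mass at most $\delta^{10}\cdot\#\{i\}\lesssim\delta^{6}$, since $\mathcal P$ is bounded and so there are $O(\delta^{-4})$ cubes. Because $a_G\ge\delta^{s/2}$, this is negligible compared with $a_G$. The remaining weights lie in $O(\log(1/\rho))$ dyadic classes $w_i\sim 2^{-k}$, so some class $G_k\subset G$ satisfies $a_{G_k}\gtrsim a_G/\log(1/\rho)$. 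Within it, $|G_k|\sim 2^k a_{G_k}\gtrsim a_G\delta^{-s}/\log(1/\rho)$, using $2^{-k}\lesssim\delta^s$.

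\emph{Step 2 (essential distinctness).} The tube $T_i$ is determined up to $O(\rho)$ by $q_i$. Two tubes whose parameters are at distance $\ge C'\rho$ in $\R^4$ are essentially distinct: either their $p$'s differ, so the centres are far apart, or their $v$'s differ, so the directions are separated and the overlap is at most a fraction of the volume once $C'$ is large. Conversely, any $C'\rho$-ball in parameter space contains $O(C_{\rm fd}^4C'^4)=O(1)$ cube centres. A greedy maximal $C'\rho$-separated subfamily of $\{q_i:i\in G_k\}$ therefore loses only a constant factor. Call the result $\mathcal S_1$.

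\emph{Step 3 (prism condition).} This is the main obstacle. Let $W$ be an $r_1\times r_2\times2$ prism. If $T_i\subset W$, then $\ell^0_{p_i,v_i}$ lies in $W$ shrunk by $\rho$, and every $(p,v)\in Q_i\cap\mathcal P$ has its segment inside a dilate $\widetilde W$ of dimensions $(r_1+C\rho)\times(r_2+C\rho)\times(2+C\rho)$. We may assume $r_1\ge\rho$, since otherwise no $\rho$-tube fits. Hence
\[
\#\{i\in\mathcal S_1:T_i\subset W\}\cdot 2^{-k}\lesssim\eta_0(\mathcal A(\widetilde W))\lesssim\rho^{s-2}r_1r_2
\]
by \eqref{fd:eq:prism-bound} with $\rho\le r_1$. (The height exceeds $8$ only trivially; a constant rescaling of $W$ or of the normalization handles this.) So the count is at most $C2^k\rho^{s-2}r_1r_2$. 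The prism hypothesis of Theorem~\ref{fd:thm:WZ} instead asks for at most $100r_1r_2\rho^{-2}$, so we need $2^k\lesssim\rho^{-s}$. That holds only if the weight class is not too light; lighter classes give too many tubes per prism.

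To fix this, I would randomly subsample $\mathcal S_1$, keeping each index independently with probability $p=\min\{1,c\,2^{-k}\rho^{-s}\}$. The expected count in each prism is then at most $c'\,100\,r_1r_2\rho^{-2}$. Prisms need only be checked over an $O(\rho)$-net of positions, orientations and dyadic side lengths, which is a family of $\rho^{-O(1)}$ prisms. By Chernoff, together with the fact that the expected count is $\gtrsim\log$ (or trivially small otherwise), all of them satisfy the bound simultaneously with positive probability, at the cost of a further $(\log(1/\rho))^{O(1)}$ factor in $p$. The retained set has size about $p|\mathcal S_1|\gtrsim a_G\rho^{-s}/(\log(1/\rho))^{O(1)}$.

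Careful bookkeeping should bring the logarithmic losses to the stated power $4$: one from Step 1 and up to three from the union bound and net sizes. The hypothesis $a_G\ge\delta^{s/2}$ makes the expected selection size polynomially large, so the concentration is effective. Finally, containment in the unit ball follows from the remark after \eqref{fd:eq:segment}, and none of the constants depend on $G$ or $\eta_0$ beyond the two assumed bounds.
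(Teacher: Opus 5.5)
Your Steps 1 and 2, and your Chernoff bound for prisms with many expected tubes, are fine. Pigeonholing into one weight class and then thinning with probability $\min\{1,c2^{-k}\rho^{-s}(\log(1/\rho))^{-m}\}$ is an acceptable substitute for the paper's sampling with probability proportional to $w_i$.

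The gap is the claim that a union bound over a $\rho^{-O(1)}$ net of prisms then works for \emph{all} prisms. Write $K_W=100r_1r_2\rho^{-2}$ for the allowed count. The expected count is at most $K_W(\log(1/\rho))^{-m}$, so the best available tail bound is
\[
 \mathbb P(X_W>K_W)\le\bigl(C(\log(1/\rho))^{-m}\bigr)^{K_W}.
\]
This beats the $\rho^{-O(1)}$ size of the net only when $K_W\gtrsim\log(1/\rho)/\log\log(1/\rho)$. Every net contains prisms with smaller $K_W$, down to widths comparable to $\rho$. For these, a small expectation gives only a polylogarithmically small failure probability, so ``trivially small otherwise'' does not close the union bound.

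Your deterministic $C'\rho$-separation does not save these prisms either. For fixed $a$, the admissible $b$ range over a ball of radius $O(r_1)$. So $\mathcal A(W)$ has four-dimensional size $\sim(r_1r_2)^2$ and can contain $\sim(r_1r_2\rho^{-2})^2$ points that are $C'\rho$-separated. This exceeds $K_W$ once $r_1r_2\rho^{-2}$ is larger than a constant.

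The missing idea is an alteration step at the larger scale $r_0=C_3\rho\log(1/\rho)$; this is how the paper handles these prisms. After the random thinning, delete both members of every selected pair with $|q_i-q_j|\le r_0$.
\begin{itemize}
\item By \eqref{fd:eq:parameter-ball}, an $r_0$-ball contains $O(2^kr_0^s)$ indices of your class. So the expected number of such pairs is at most the expected selection size times $O((\log(1/\rho))^{s-m})$.
\item Since $s\le2$, taking $m=3$ makes this a vanishing fraction of the selection. Together with the one logarithm from your Step 1, this gives exactly the power $4$ in \eqref{fd:eq:selection-size}.
\item For a test prism with $r_1r_2\rho^{-2}<C_5\log(1/\rho)$, Lemma~\ref{fd:lem:localization} shows that the contained segments have parameters of diameter $O(r_2)=O(C_5\rho\log(1/\rho))<r_0$. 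So each such prism contains at most one surviving tube.
\item For $r_1r_2\rho^{-2}\ge C_5\log(1/\rho)$, your Chernoff bound gives failure probability at most $\rho^{C_5}$, which is summable over the net.
\end{itemize}
The $r_0$-separation also yields essential distinctness directly, so your Step 2 becomes unnecessary.
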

\begin{proof}
We first sample the tubes independently according to their weights.
The prism estimate controls the number selected in each sufficiently
large prism. We then remove tubes with nearby parameters. This gives
the required bound for smaller prisms and makes the remaining tubes
essentially distinct.

Let $L_{\rho}=\log(1/\rho)$. We select the indices in $G$ independently with
probabilities
\begin{equation}\label{fd:eq:probability}
 \mathbb P(i\text{ is selected})
 =\frac{w_i}{C_2\rho^sL_{\rho}^4}.
\end{equation}
The ball bound gives $w_i\le C\delta^s$, so these are valid probabilities
for $C_2$ fixed sufficiently large and $\delta$ sufficiently small.
If $X$ is the number selected, then
\[
 M_0:=\mathbb EX=\frac{a_G}{C_2\rho^sL_{\rho}^4},\qquad
 \operatorname{Var}X\le M_0.
\]
Since $a_G\ge\delta^{s/2}$ and $\rho=C_{\rm fd}\delta$,
\[
 M_0\ge c_s\delta^{-s/2}(\log(1/\delta))^{-4}\longrightarrow\infty.
\]
Here $s>0$ is fixed, and the convergence is uniform in $G$ and
$\eta_0$ under the stated bounds. By Chebyshev's inequality,
\begin{equation}\label{fd:eq:sample-lower}
 \mathbb P(X<M_0/2)\le4/M_0=o(1).
\end{equation}

We first choose a collection of at most $\rho^{-N_0}$ test prisms, for a fixed
absolute $N_0$, with the following property. Every
$r_1\times r_2\times2$ prism $W$ containing a $\rho$-tube in the unit
ball has an associated test prism $W'$ such that
\begin{equation}\label{fd:eq:net}
 W\cap B(0,1)\subset W',\qquad
 r'_1\le4r_1,\quad r'_2\le4r_2,
\end{equation}
where the transverse widths are ordered so that
$\rho\le r'_1\le r'_2\le4$, and the length of $W'$ is at most six.
To construct this collection, we first truncate widths exceeding two when
covering $W\cap B(0,1)$, and we replace the longitudinal constraint by an
interval of length two containing the projection of the unit ball. The
resulting centres lie in a bounded set. We then round the transverse widths
upward to dyadic values and approximate the centres and orthonormal frames
by finite nets of accuracy $\rho^3$. Finally, we double the rounded widths
and enlarge the length to six. Because the approximation error is
$O(\rho^3)$ and the relevant widths are at least $2\rho$ whenever $W$
contains a $\rho$-tube, this construction gives \eqref{fd:eq:net}. The
centre and frame nets have polynomial cardinality in $\rho^{-1}$, while the
two dyadic width choices contribute only a logarithmic factor. Thus, after
increasing $N_0$, the total cardinality is at most $\rho^{-N_0}$.

For each test prism $W'$, denote by $X_{W'}$ the number of selected tubes
contained in it. The sum of the weights $w_i$ over all indices
$i\in G$ with $T_i\subset W'$ is at most
$C\rho^{s-2}r'_1r'_2$. To see this, notice that the segments depend Lipschitz
continuously on $(p,v)$ in the bounded parameter region. Hence, all
parameters in the cubes of those representatives have their segments in
an $O(\delta)$ enlargement of $W'$. For sufficiently small $\delta$, this enlargement has transverse
widths comparable to $r'_1,r'_2$, and all its dimensions are at most eight.
The prism estimate then applies. With
$k_{W'}=r'_1r'_2\rho^{-2}$, we obtain
\begin{equation}\label{fd:eq:prism-expectation}
 \mathbb EX_{W'}\le\frac{Ck_{W'}}{C_2L_{\rho}^4}.
\end{equation}
For a sum $X_{\rm B}$ of independent Bernoulli variables with mean
$M_{\rm B}>0$, we have
\begin{equation}\label{fd:eq:bernoulli}
 \mathbb P(X_{\rm B}\ge k)\le(eM_{\rm B}/k)^k
\end{equation}
for $k\ge M_{\rm B}$. Independence gives
\[
 \mathbb E e^{s_{\rm B}X_{\rm B}}
 \le\exp\bigl((e^{s_{\rm B}}-1)M_{\rm B}\bigr)
 \quad\text{for }s_{\rm B}\ge0.
\]
The exponential Markov inequality with $e^{s_{\rm B}}=k/M_{\rm B}$ gives
\[
 \mathbb P(X_{\rm B}\ge k)
 \le\exp\bigl(-s_{\rm B}k+(e^{s_{\rm B}}-1)M_{\rm B}\bigr)
 \le(eM_{\rm B}/k)^{k}.
\]
The claim is immediate when $M_{\rm B}=0$.

We choose $C_{5}>N_0+10$. If $k_{W'}\ge C_{5}L_{\rho}$, then
\eqref{fd:eq:bernoulli}, applied with $k=\lceil k_{W'}\rceil$, together
with \eqref{fd:eq:prism-expectation}, shows that the failure probability
is at most $e^{-k_{W'}}$ for sufficiently small $\rho$. A union bound
over the test prisms has total failure probability at most
\[
 \rho^{-N_{0}}e^{-C_{5}L_{\rho}}=\rho^{C_{5}-N_{0}}=o(1).
\]
Thus, with probability $1-o(1)$,
\begin{equation}\label{fd:eq:large-counts}
 X_{W'}\le2k_{W'}
\end{equation}
for $k_{W'}\ge C_{5}L_{\rho}$.

This leaves the prisms with $k_{W'}<C_{5}L_{\rho}$. We treat them by
separating nearby parameters. Let
\[
 r_0=C_3\rho L_{\rho},
\]
where $C_3$ is large relative to $C_{5}$ and the constant in
Lemma~\ref{fd:lem:localization}. Let $N_{\rm pair}$ be the number of
unordered pairs of distinct selected indices with $|q_i-q_j|\le r_0$.
If $|q_i-q_j|\le r_0$, then $Q_j\cap\mathcal P\subset B(q_i,r_0+C\delta)$.
Thus, the parameter ball estimate gives
\begin{align*}
 \mathbb E N_{\rm pair}
 &\le\frac{C}{C_2^2\rho^{2s}L_{\rho}^8}
       \sum_{i\in G}w_i
       \sum_{j:\,|q_i-q_j|\le r_0}w_j\\
 &\le\frac{C a_G r_0^s}{C_2^2\rho^{2s}L_{\rho}^8}
 \le C_s M_0L_{\rho}^{s-4}=o(M_0).
\end{align*}
Since $s\le2$, the last logarithmic power tends to zero.
By Markov's inequality, \(N_{\rm pair}\le M_0/8\) with probability \(1-o(1)\).
The preceding probability estimates allow us to choose an outcome for
which \(N_{\rm pair}\le M_0/8\), \(X\ge M_0/2\), and
\eqref{fd:eq:large-counts} all hold. From this outcome, we remove every selected index belonging to a pair at distance at most $r_0$. At most $2N_{\rm pair}\le M_0/4$ indices are removed, so at least $M_0/4$ remain. Their parameters are $r_0$-separated. If a test prism has $k_{W'}<C_{5}L_{\rho}$, then $r'_1\ge\rho$ implies $r'_2<C_{5}\rho L_{\rho}$.
Lemma~\ref{fd:lem:localization} shows that the parameters of its contained segments have diameter at most $CC_{5}\rho L_{\rho}<r_0$. Such a prism contains
at most one remaining tube.

For an arbitrary $r_1\times r_2\times2$ prism, we compare it with its
associated test prism. In the large case, the number of remaining tubes is
at most
\[
 2r'_1r'_2\rho^{-2}\le32r_1r_2\rho^{-2}
 \le100r_1r_2\rho^{-2}.
\]
In the small case it is at most one, and the same final bound holds:
a prism containing a $\rho$-tube has both transverse widths at least
$2\rho$.

It remains to prove essential distinctness. For parameters in \(\mathcal P\),
the intersection of any tube with
a time slice has area at most $C\rho^2$. If two tubes intersected in
more than half their common volume, the set of times at which their
slices meet would have length bounded below by an absolute constant.
At each such time,
\[
 |p_i-p_j+u(v_i-v_j)|\le C\rho.
\]
Here the same estimate also holds near the rounded tube ends, since a
point in a tube is within $\rho$ of an axis point, and the slopes are
bounded. We may choose two such times $u_1,u_2$ with $|u_{1}-u_{2}|\ge c>0$. Then
\[
 |u_{1}-u_{2}|\,|v_{i}-v_{j}|\le C\rho,
 \qquad |p_i-p_j|\le C\rho+|u_{1}|\,|v_i-v_j|\le C\rho.
\]
This contradicts $r_0$-separation for small
$\rho$. Thus, the remaining tubes are essentially distinct.
Their containment in the unit ball follows from \eqref{fd:eq:normalization}
and \eqref{fd:eq:segment}. Their number is at least $M_0/4$, so
\eqref{fd:eq:selection-size} follows.
This completes the proof.
\end{proof}

We now use the selected tubes to estimate the mass of a measurable set.

\begin{proposition}\label{fd:prop:testing}
Let $\mu,\nu$ be compactly supported Frostman probabilities with exponents
$\alpha,\beta\in(0,2]$, and suppose that \eqref{fd:eq:normalization} holds.
Set $s=\min\{\alpha+\beta,2\}$. For every $\varepsilon>0$ there are
$Q_{\rm test}=Q_{\rm test}(\varepsilon)>2$, $C_\varepsilon<\infty$, and
$\delta_\varepsilon>0$ such that, for every measurable
$\Omega\subset I_0\times\R^2$ and $0<\delta<\delta_\varepsilon$,
\begin{equation}\label{fd:eq:testing}
 \int_\Omega F_\delta(u,x)\dd u\dd x
 \le C_\varepsilon\left(
        \delta^{s/2}+\bigl(\delta^{-(2-s)-\varepsilon}|\Omega|\bigr)^{1/Q_{\rm test}}\right).
\end{equation}
The exponent $Q_{\rm test}$ depends only on $\varepsilon$. The constants
and $\delta_\varepsilon$ may also depend on the fixed Frostman exponents
and constants, but are independent of $\Omega$ and $\delta$.
\end{proposition}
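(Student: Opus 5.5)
We prove \eqref{fd:eq:testing} by a pigeonholing argument that converts the mass carried by $\Omega$ into a family of shaded tubes, and then apply Theorem~\ref{fd:thm:WZ} to the tubes produced by Lemma~\ref{fd:lem:selection}.

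\emph{Step 1: tube decomposition.} Write $M=\int_\Omega F_\delta$. By \eqref{fd:eq:Fdelta} and Fubini,
\[
 M=\int \Phi(p,v)\dd\eta(p,v),\qquad \Phi(p,v)=\int_\Omega\varphi_\delta(x-p-uv)\dd u\dd x .
\]
Since $\varphi_\delta\le C\delta^{-2}$ and is supported in $B(0,\delta)$, we have $\Phi(p,v)\le C\delta^{-2}|\Omega\cap N_{p,v}|$. Here $N_{p,v}=\{(u,x):u\in I_0,\,|x-p-uv|\le\delta\}$, which lies in $T_i$ whenever $(p,v)\in Q_i$, by the choice of $C_{\rm fd}$. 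Also $\Phi\le |I_0|$ trivially.

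Group the cubes $Q_i$ dyadically according to the value $\lambda\in(0,1]$ of
\[
 \sup_{(p,v)\in Q_i\cap\mathcal P}\delta^{-2}|\Omega\cap N_{p,v}|,
\]
which is comparable to $|\Omega\cap T_i|/|T_i|$ up to constants. Set $G_\lambda$ to be the indices whose ratio is about $\lambda$. The indices with $\lambda\le\lambda_0$ contribute at most $C\lambda_0$ to $M$. There are $O(\log(1/\delta))$ remaining dyadic classes, so some class $G=G_\lambda$ with $\lambda\ge\lambda_0$ satisfies
\[
 M\lesssim\lambda_0+\log(1/\delta)\,\lambda\,a_G .
\]

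\emph{Step 2: applying Wang--Zahl.} If $a_G<\delta^{s/2}$, this class contributes at most $C\delta^{s/2}$; this is the first term in \eqref{fd:eq:testing}. Otherwise, Proposition~\ref{fd:prop:product} supplies \eqref{fd:eq:prism-bound} and \eqref{fd:eq:parameter-ball} for $\eta_0=\eta$. Lemma~\ref{fd:lem:selection} then gives $\mathcal S\subset G$ with $|\mathcal S|\gtrsim a_G\rho^{-s}L_\rho^{-4}$, satisfying the hypotheses of Theorem~\ref{fd:thm:WZ}.

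Shade each tube by $Y(T_i)=\Omega\cap T_i$, so that $|Y(T_i)|\gtrsim\lambda|T_i|$. Since every shading lies in $\Omega$, Theorem~\ref{fd:thm:WZ} with parameter $\varepsilon/2$ gives
\[
 |\Omega|\ge\rho^{\varepsilon/2}\lambda^{K}\,|\mathcal S|\rho^2
 \gtrsim \rho^{\varepsilon/2}L_\rho^{-4}\lambda^{K}a_G\,\delta^{2-s},
\]
where $K=K_{\rm WZ}$. Using $a_G\le1$ and $a_G\ge\lambda^{K}a_G^{K+1}$, we get
\[
 (\lambda a_G)^{K+1}\le\lambda^Ka_G\lesssim\delta^{-(2-s)-\varepsilon/2}L^4|\Omega|\le\delta^{-(2-s)-\varepsilon}|\Omega| .
\]
Hence $\lambda a_G\lesssim(\delta^{-(2-s)-\varepsilon}|\Omega|)^{1/(K+1)}$.

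\emph{Step 3: collecting terms.} Absorb the $\log(1/\delta)$ loss by enlarging the exponent slightly: take $Q_{\rm test}=K+2$ and use a spare $\delta^{\varepsilon}$ factor, or handle the loss through the case where $\delta^{-(2-s)-\varepsilon}|\Omega|\ge1$, in which the bound is trivial since $M\le|I_0|$. Choose $\lambda_0$ as a power of $\delta^{-(2-s)-\varepsilon}|\Omega|$, or simply $\lambda_0=\delta^{s/2}$.

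\emph{Main obstacle.} The delicate point is the bookkeeping linking the cube-level density $\lambda$ to the shading. We need $|\Omega\cap T_i|\ge c\lambda|T_i|$ uniformly over the cube, while the mass estimate uses $\sup_{Q_i}\Phi\lesssim\lambda$ rather than $\inf_{Q_i}\Phi$. Both hold because $N_{p,v}\subset T_i$ for all $(p,v)\in Q_i$, so defining $\lambda$ via the sup makes both directions work.

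A second, minor point is that Theorem~\ref{fd:thm:WZ} requires $\rho<\rho_\varepsilon$. This sets $\delta_\varepsilon$.
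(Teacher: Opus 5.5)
Your argument has the same skeleton as the paper's proof. You dominate $F_\delta$ by the tube indicators (your inclusion $N_{p,v}\subset T_i$ is exactly \eqref{fd:eq:domination}), extract indices carrying both a definite shading density and enough weight, pass them through Lemma~\ref{fd:lem:selection}, and apply Theorem~\ref{fd:thm:WZ} with shadings $\Omega\cap T_i$.

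The only difference is the extraction step. The paper works directly with $d_i=|\Omega\cap T_i|/|T_i|$, so that $a_\Omega\le C_4\sum_i w_id_i$, and takes the single popular set $G=\{i:d_i\ge a_\Omega/(2C_4)\}$. Since $\sum_i w_i=1$, this one threshold gives both $\lambda=a_\Omega/(2C_4)$ and $a_G\ge a_\Omega/(2C_4)$. There are then no dyadic classes and no logarithmic count, and $Q_{\rm test}=K_{\rm WZ}+1$ follows at once. Your cube-level supremum is unnecessary. It is also only bounded above by a multiple of $d_i$, not comparable to it, because $T_i$ is $C_{\rm fd}$ times thicker than $N_{p,v}$. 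That happens to be the direction you use.

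As written, your single-class pigeonholing has a bookkeeping slip. If the pigeonholed class has $a_G<\delta^{s/2}$, you only obtain $M\lesssim\delta^{s/2}\log(1/\delta)$. That is not the first term of \eqref{fd:eq:testing}, and the second term cannot absorb the logarithm when $|\Omega|$ is small.

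The fix is to avoid pigeonholing. Bound all classes with $a_{G_\lambda}<\delta^{s/2}$ together by $\delta^{s/2}\sum_\lambda\lambda\lesssim\delta^{s/2}$, which is geometric in the dyadic $\lambda$. Then apply selection and Wang--Zahl to each of the $O(\log(1/\delta))$ remaining classes with $\lambda>\delta^{s/2}$. Run Theorem~\ref{fd:thm:WZ} with $\varepsilon/4$ in place of $\varepsilon/2$. This gives $\lambda a_{G_\lambda}\lesssim\delta^{\varepsilon/(2(K+1))}\bigl(\delta^{-(2-s)-\varepsilon}|\Omega|\bigr)^{1/(K+1)}$, and the spare power of $\delta$ absorbs the number of classes.

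Raising the exponent to $K+2$, as you suggest, does not by itself absorb a logarithm. Neither does the trivial case $\delta^{-(2-s)-\varepsilon}|\Omega|\ge1$. The spare power of $\delta$ is what does the work. With these changes your route proves the proposition, but the paper's single threshold is shorter and lossless.
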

\begin{proof}
If the left-hand side of \eqref{fd:eq:testing} is small, the first term
on the right suffices. Otherwise, a positive amount of the total weight
lies on tubes for which $\Omega$ occupies a definite proportion of the
volume. We apply Lemma~\ref{fd:lem:selection} to those tubes and use the
Wang--Zahl theorem to bound $|\Omega|$ from below.

We use the cubes $Q_i$, weights $w_i$ and tubes $T_i$ defined before
Lemma~\ref{fd:lem:selection}, taking $\eta_0=\eta$ from
\eqref{fd:eq:eta}. Proposition~\ref{fd:prop:product} verifies its hypotheses.
The support and upper bound of $\varphi$ give, on $I_0\times\R^2$,
\begin{equation}\label{fd:eq:domination}
 F_\delta(u,x)\le C\delta^{-2}\sum_i w_i\one_{T_i}(u,x).
\end{equation}
For each $(p,v)\in Q_{i}$ with $\varphi_\delta(x-p-uv)\ne0$, the choice
of $C_{\rm fd}$ gives $(u,x)\in T_{i}$.

We set
\[
 a_{\Omega}=\int_\Omega F_\delta,\qquad
 d_i=\frac{|\Omega\cap T_i|}{|T_i|}\in[0,1].
\]
Since $|T_i|\asymp\delta^2$, \eqref{fd:eq:domination} gives
\begin{equation}\label{fd:eq:mass-richness}
 a_{\Omega}\le C_4\sum_i w_i d_i.
\end{equation}
Here we take $C_4\ge1$. If $a_{\Omega}\le2C_4\delta^{s/2}$, the first term in \eqref{fd:eq:testing} suffices.
Otherwise, let
\[
 G=\{i:d_i\ge a_{\Omega}/(2C_4)\}.
\]
Because $\sum\limits_iw_i=1$, the complementary indices contribute at most
$a_{\Omega}/(2C_4)$ to $\sum\limits_iw_id_i$. It follows that
\begin{equation}\label{fd:eq:good-mass}
 a_G=\sum_{i\in G}w_i
 \ge\sum_{i\in G}w_i d_i\ge a_{\Omega}/(2C_4)>\delta^{s/2}.
\end{equation}
By Lemma~\ref{fd:lem:selection}, there is $\mathcal S\subset G$ with
\eqref{fd:eq:selection-size} whose tubes satisfy the hypotheses of
Theorem~\ref{fd:thm:WZ}. We apply that theorem with $\varepsilon/2$ in
place of $\varepsilon$ and with $Y(T_i)=\Omega\cap T_i$. These shadings
have relative volume at least $\lambda=a_{\Omega}/(2C_4)$.
By \eqref{fd:eq:totalmass}, $0<a_{\Omega}\le|I_0|=1/2$, so
$0<\lambda\le1$. If $K_{\rm WZ}>1$ is the exponent in that theorem, we obtain
\begin{align*}
 |\Omega|
 &\ge\left|\bigcup_{i\in\mathcal S}Y(T_i)\right|\\
 &\ge\rho^{\varepsilon/2}
       \left(\frac{a_{\Omega}}{2C_4}\right)^{K_{\rm WZ}}
       \sum_{i\in\mathcal S}|T_i|\\
 &\ge c_\varepsilon\delta^{2-s+\varepsilon/2}
       a_{\Omega}^{K_{\rm WZ}}\frac{a_G}{(\log(1/\rho))^4}
 \ge c'_\varepsilon\delta^{2-s+\varepsilon}a_{\Omega}^{K_{\rm WZ}+1}.
\end{align*}
Here we used $|T_i|\asymp\rho^2$ and \eqref{fd:eq:selection-size}.
The last step uses
\eqref{fd:eq:good-mass} and the inequality
\[
 (\log(1/\rho))^{-4}\ge c_\varepsilon\delta^{\varepsilon/2},
\]
which holds for sufficiently small $\delta$.
We set $Q_{\rm test}=K_{\rm WZ}+1>2$ and solve the preceding inequality for
$a_{\Omega}$. Combining the resulting bound with the case
$a_{\Omega}\le2C_4\delta^{s/2}$ proves \eqref{fd:eq:testing}. This completes the proof.
\end{proof}

\begin{remark}\label{fd:rem:dimension-not-area}
The Wang--Zahl theorem is the only non-elementary input in the proof of
Theorem~\ref{thm:full-dimension}. For fixed $\varepsilon$, the estimate
\eqref{fd:eq:testing} has a fixed positive exponent $1/Q_{\rm test}$.
A cover with at most $\delta^{-d}$ squares of side $\delta$, where $d<s$,
gives $|\Omega|\lesssim\delta^{2-d}$,
so the second term is at most $C\delta^{(s-d-\varepsilon)/Q_{\rm test}}$.
We may choose $\varepsilon<s-d$ for each fixed $d$. No uniform bound for
$Q_{\rm test}$ as $d\uparrow s$ is needed. At $s=2$, the estimate still
does not give positive Lebesgue measure.
\end{remark}
\section{Proof of Theorem \ref{thm:full-dimension}}
\label{sec:proof-full-dimension}

We shall first prove a measure version of Theorem~\ref{thm:full-dimension}.
The testing estimate from the preceding section gives one exceptional
set of rotations for all Borel sets of dimension less than
$\min\{\alpha+\beta,2\}$. Frostman's lemma then gives the set statement.

\begin{proposition}
\label{prop:full-dimension-measure}
Let \(\mu,\nu\) be compactly supported Frostman probability measures on
\(\R^2\) with exponents \(\alpha,\beta\in(0,2]\), respectively. Set
\(s=\min\{\alpha+\beta,2\}\). There is a Borel set \(\mathcal G\subset\OO\),
with \(m(\OO\setminus\mathcal G)=0\), such that for every
\(g\in\mathcal G\) and every Borel set \(S\subset\R^2\) with
\(\dimH S<s\),
\[
 \lambda_g(S)=0.
\]
\end{proposition}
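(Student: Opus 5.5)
We reduce to a countable family of test sets fixed in advance, estimate their average mass over rotations using Proposition~\ref{fd:prop:testing}, and apply Borel--Cantelli. After the normalization \eqref{fd:eq:normalization} (a common dilation), it suffices to treat the component $\SO$; reflections are handled by replacing $\mu$ with its pushforward under a fixed reflection, which keeps the Frostman exponent. We also restrict to rotations $R_\theta$ with $\theta=2\arctan u$, $u\in I_0$. Finitely many rotated copies $R_{\theta_0}\mu$ cover all of $\SO$, and $R_{\theta_0}\mu$ is again $\alpha$-Frostman.

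\textbf{Test sets.} Fix rationals $d<s$. Since $\lambda_g$ is supported in a fixed compact set $K$, Lemma~\ref{fd:lem:cover} applies for every $j_0$. The difficulty is that the cover depends on $S$, while the null set must not. We therefore bound, uniformly over all admissible families, the averaged mass of a union of dyadic squares of side $2^{-j}$ with at most $2^{jd}$ squares. For such a union $U_j$, let
\[
 \Omega=\{(u,x):u\in I_0,\ x\in(\operatorname{Id}-u\mathsf J)(U_j)\}.
\]
By Lemma~\ref{fd:lem:cayley}, the set $(\operatorname{Id}-u\mathsf J)U_j$ is covered by $O(2^{jd})$ balls of radius $C2^{-j}$, so $|\Omega|\lesssim 2^{-j(2-d)}$. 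The identity \eqref{fd:eq:cayley} shows that $\lambda_{R_\theta}(U_j)$ equals $\lambda'_u((\operatorname{Id}-u\mathsf J)U_j)$. Take $\delta=2^{-j}$ and slightly enlarge the squares so that $\lambda'_u(V)\le\int_{V^\delta}F_\delta(u,\cdot)$. Proposition~\ref{fd:prop:testing}, with $\varepsilon<(s-d)/2$, then gives
\[
 \int_{I_0}\lambda_{R_{2\arctan u}}(U_j)\,du\lesssim 2^{-js/2}+2^{-j(s-d-\varepsilon)/Q_{\rm test}}=:\epsilon_j ,
\]
uniformly in the choice of $U_j$.

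\textbf{Removing the dependence on $S$.} The bound above is only an average over $u$ for a fixed union, and there are too many unions of squares at scale $j$ for a union bound. We use instead a maximal function. For each $u$, let $M_j(u)$ be the supremum of $\lambda'_u$ over unions of at most $2^{jd}$ dyadic squares at scale $2^{-j}$. This supremum is attained by the heaviest squares for that $u$, so $M_j$ is measurable. Let $\Omega_j$ be the union over $u$ of the images of those optimal squares. It is a legitimate measurable set, since it is chosen after $u$ but before any $S$, and it still satisfies $|\Omega_j|\lesssim 2^{-j(2-d)}$ because each slice has bounded area. Proposition~\ref{fd:prop:testing} allows arbitrary measurable $\Omega$, so $\int_{I_0} M_j\,du\lesssim\epsilon_j$, which is summable in $j$. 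Then $\sum_j M_j(u)<\infty$ for a.e.\ $u$. Let $\mathcal G$ be this full-measure set, intersected over rational $d<s$ and over the finitely many reduction pieces.

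\textbf{Conclusion.} Given $g\in\mathcal G$ and $S$ with $\dimH S<d<s$, apply Lemma~\ref{fd:lem:cover} with an arbitrary $j_0$. The scale-$2^{-j}$ squares of the cover number at most $2^{jd}$, so
\[
 \lambda_g(S)\le\sum_{j\ge j_0}M_j(u)\longrightarrow0 \quad (j_0\to\infty).
\]
The Borel measurability of $\mathcal G$ follows from the measurability of the $M_j$. The main obstacle is the uniformity step: Proposition~\ref{fd:prop:testing} must be applied to a set $\Omega$ whose spatial slices vary with $u$, and the enlargement from the mollifier $F_\delta$ versus $\lambda'_u$ must be checked.
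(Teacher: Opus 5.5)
Your proposal is correct and follows essentially the paper's argument. You select, for each $u$, a maximizing family of at most $2^{jd}$ dyadic squares; you feed the resulting measurable set $\Omega_j$, with $|\Omega_j|\lesssim 2^{-j(2-d)}$, into Proposition~\ref{fd:prop:testing}; you sum in $j$ and cover $S$ via Lemma~\ref{fd:lem:cover}; and you pass to all of $\OO$ through the Cayley chart, finitely many rotated charts and a fixed reflection. The only difference is minor: you bound $\int_{I_0}M_j\,du$ directly and sum via Tonelli, whereas the paper thresholds at $j^{-2}$ and applies Borel--Cantelli to $E_j=\{H_j>j^{-2}\}$; in either version you should fix a tie-breaking rule (e.g.\ the first maximizing family in a fixed enumeration) so that $\Omega_j$ is Borel.
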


\begin{proof}[Proof of Proposition~\ref{prop:full-dimension-measure}]
For each $d<s$, we control simultaneously all unions of at most
$2^{jd}$ dyadic squares at scale $2^{-j}$. The testing estimate and
Borel--Cantelli give a null set outside which the mass of every such
union tends to zero at a summable rate. The covering lemma then gives
the conclusion for every Borel set of dimension less than $d$.

We first prove the claim in the line coordinates, assuming
\eqref{fd:eq:normalization}. All the probabilities $\lambda'_u$,
$u\in I_0$, are supported in a fixed closed bounded square $K_0\subset\R^2$.
We fix a rational number $d\in(0,s)$ and set $\delta_j=2^{-j}$. We denote
by $\cD_j$ the finite family of dyadic squares of side $\delta_j$ meeting
$K_0$. We define
\begin{equation}\label{fd:eq:Hj}
 H_j(u)=\max\left\{
 \lambda'_u\left(\bigcup\mathcal Q\right):
 \mathcal Q\subset\cD_j,\quad
 |\mathcal Q|\le\lfloor2^{jd}\rfloor\right\},
\end{equation}
and
\[
 E_j=\{u\in I_0:H_j(u)>j^{-2}\}.
\]
For each fixed Borel square $Q$, the map
\[
 u\longmapsto\lambda'_u(Q)
 =\int\one_Q(p+uv)\dd\eta(p,v)
\]
is Borel. The same holds for a finite union of squares. Since the
maximum in \eqref{fd:eq:Hj} is over finitely many families, $H_j$ and $E_j$
are Borel.

We enumerate those finite families and, for each $u\in E_j$, denote by
$\mathcal Q_j(u)$ the first family attaining the maximum in
\eqref{fd:eq:Hj}. The parameter set on which any particular family is
chosen is Borel. We define $\Omega_j\subset I_0\times\R^2$ by its sections:
\[
 (\Omega_j)_u=
 \begin{cases}
 N_{\delta_j}\!\left(\displaystyle\bigcup\mathcal Q_j(u)\right),
       &u\in E_j,\\[2mm]
 \varnothing,&u\notin E_j.
 \end{cases}
\]
Here $N_r$ denotes the open $r$-neighbourhood in $\R^2$.
$\Omega_j$ is Borel because it is a finite union of products of Borel
parameter sets with fixed open subsets of $\R^2$. Each section is a union
of at most $2^{jd}$ enlarged squares of area at most $C\delta_j^2$. Therefore,
\begin{equation}\label{fd:eq:Omega-size}
 |\Omega_j|\le C\delta_j^{2-d}.
\end{equation}

For \(u\in E_j\), the nonnegativity of \(\varphi_{\delta_j}\), its support
in \(B(0,\delta_j)\), and the identity
\(\int\varphi_{\delta_j}=1\) give
\[
 \int_{(\Omega_j)_u}F_{\delta_j}(u,x)\dd x
 \ge \lambda'_u\left(\bigcup\mathcal Q_j(u)\right)=H_j(u).
\]
Hence,
\begin{equation}\label{fd:eq:Omega-mass}
 \int_{\Omega_j}F_{\delta_j}\dd u\dd x
 \ge\int_{E_j}H_j(u)\dd u
 \ge j^{-2}\LL^1(E_j).
\end{equation}

We apply Proposition~\ref{fd:prop:testing} with
$\varepsilon=(s-d)/2$ and fix the resulting exponent
$Q_{\rm test}=Q_{\rm test}((s-d)/2)$. Since
\[
 \bigl(\delta_j^{-(2-s)-\varepsilon}\delta_j^{2-d}\bigr)^{1/Q_{\rm test}}
 =\delta_j^{(s-d)/(2Q_{\rm test})},
\]
\eqref{fd:eq:Omega-size} and \eqref{fd:eq:Omega-mass} give, for all
sufficiently large $j$,
\begin{equation}\label{fd:eq:exceptional-sum}
 \LL^1(E_j)
 \le C_{s,d}j^2\left(2^{-js/2}+2^{-j(s-d)/(2Q_{\rm test})}\right).
\end{equation}
For the fixed $s$ and $d$, both series on the right are summable.
Their constants need not be uniform as $d\uparrow s$. Borel--Cantelli
now gives a Borel null set $Z_d\subset I_0$ such that, for every
$u\in I_0\setminus Z_d$ and all sufficiently large $j$,
\begin{equation}\label{fd:eq:small-covers}
 H_j(u)\le j^{-2}.
\end{equation}

We now fix $u\notin Z_d$ and a Borel set $S\subset\R^2$ with $\dimH S<d$.
Since $\lambda'_u$ is supported in $K_0$, it is enough to consider $S\cap K_0$.
For each sufficiently large $j_0$, Lemma~\ref{fd:lem:cover} provides a dyadic
cover of $S\cap K_0$ at scales $2^{-j}$, $j\ge j_{0}$, with at most
$\lfloor2^{jd}\rfloor$ squares at each scale. By \eqref{fd:eq:Hj} and
\eqref{fd:eq:small-covers},
\[
 \lambda'_u(S)=\lambda'_u(S\cap K_0)
 \le\sum_{j\ge j_{0}}H_j(u)
 \le\sum_{j\ge j_{0}}j^{-2}.
\]
Letting $j_{0}\to\infty$ proves $\lambda'_u(S)=0$.
The set $Z_d$ was fixed before choosing $S$, so this conclusion holds
simultaneously for every such Borel set $S$.

We set
\[
 Z=\bigcup_{d\in\mathbb Q\cap(0,s)}Z_d.
\]
It is a Borel null set. For every $u\in I_0\setminus Z$ and every Borel
$S$ with $\dimH S<s$, we choose \(d\in\mathbb Q\cap(0,s)\) with
\(\dimH S<d<s\).
The preceding argument then gives
\begin{equation}\label{fd:eq:slice-result}
 \lambda'_u(S)=0.
\end{equation}
Thus, the claim holds for $\lambda'_u$ outside one Borel null set.

We now return to the rotations. Set $\theta=2\arctan u$. By
Lemma~\ref{fd:lem:cayley},
\begin{equation}\label{fd:eq:pushforward-chart}
 \lambda'_u=(\operatorname{Id}-u\mathsf J)_\#\lambda_{R_\theta}.
\end{equation}
The map $\operatorname{Id}-u\mathsf J$ is an invertible linear map of the plane.
It carries Borel sets to Borel sets and preserves Hausdorff dimension.
For every Borel set $S$,
\[
 \lambda_{R_\theta}(S)
 =\lambda'_u\bigl((\operatorname{Id}-u\mathsf J)S\bigr).
\]
Thus, \eqref{fd:eq:slice-result} implies that $\lambda_{R_\theta}$ gives
zero mass to every Borel set of dimension less than $s$ for almost all
\(\theta\in[-2\arctan(1/4),2\arctan(1/4)]\).
The exceptional angles form a null set because $d\theta/du$ is bounded
above and below on $I_0$.

Finitely many translated angle intervals cover all rotations. On an
interval centred at $\theta_0$, we have
\[
 R_\theta=R_{\theta-\theta_0}R_{\theta_0}.
\]
We apply the same argument to $(R_{\theta_0})_\#\mu$ and $\nu$.
Rotations preserve the support normalization and the Frostman bounds.
The finite union of the exceptional angle sets is null, so the claim
holds for almost all elements of $\SO$.

We fix the reflection
$g_0=\operatorname{diag}(1,-1)$. Every orientation-reversing
$g\in\OO$ has the form $R_{\theta} g_0$. We apply the rotation result
to $(g_0)_\#\mu$ and $\nu$. Since Haar measure on $\OO$ gives each
component half its mass and is normalized angular measure on either
component, this proves Proposition~\ref{prop:full-dimension-measure} for
normalized supports.
A common dilation, as described before \eqref{fd:eq:normalization}, removes
that normalization. All exceptional sets constructed above are Borel,
so $\mathcal G$ may be chosen Borel. This completes the proof.
\end{proof}

\begin{proof}[Proof of Theorem~\ref{thm:full-dimension}]
The measure statement is Proposition~\ref{prop:full-dimension-measure}.
For the set statement, we apply this proposition to Frostman measures
whose exponents approach the dimensions of $A$ and $B$.
We first treat the cases where one of these dimensions is zero.
If $\dimH A=0$, then for any $a\in A$ and every $g\in\OO$,
\[
 ga-B\subset gA-B,\qquad
 \dimH(ga-B)=\dimH B=\min\{\dimH A+\dimH B,2\}.
\]
If $\dimH B=0$, we use $gA-b\subset gA-B$ for any $b\in B$.
Thus, the claim holds for every $g$ in these cases.

Suppose that both dimensions are positive. We choose positive sequences
$\alpha_n\uparrow\dimH A$ and $\beta_n\uparrow\dimH B$, with
$\alpha_n<\dimH A$ and $\beta_n<\dimH B$.
By the compact form of Frostman's lemma, there are compact sets
$A_n\subset A$, $B_n\subset B$ supporting Frostman probabilities
$\mu_n,\nu_n$ with exponents $\alpha_n,\beta_n$, respectively.
Proposition~\ref{prop:full-dimension-measure} gives a Borel set
$\mathcal G_n\subset\OO$ of full measure for each pair.

Let $g\in\bigcap_{n=1}^{\infty}\mathcal G_n$. The compact set $gA_n-B_n$
has mass one for $(g_\#\mu_n)*\widetilde\nu_n$, so the proposition gives
\[
 \dimH(gA-B)\ge\dimH(gA_n-B_n)
 \ge\min\{\alpha_n+\beta_n,2\}
 \quad\text{for every }n.
\]
Letting $n\to\infty$, we obtain
\[
 \dimH(gA-B)\ge\min\{\dimH A+\dimH B,2\}.
\]
The intersection of the sets $\mathcal G_n$ has full measure. In particular,
the endpoint $\dimH A+\dimH B=2$ is included, without a hypothesis on
the critical Hausdorff measures. This completes the proof.
\end{proof}

The slab constraints in the proof of
Proposition~\ref{fd:prop:product} have a simple geometric interpretation.
If the lines lie in a fixed plane $n_0u+n\cdot x=c$, then $n\ne0$ and
\[
 n\cdot(a-b)=c,\qquad n\cdot\mathsf J(a+b)=-n_0.
\]
These equations are equivalent to $b=\Psi(a)$, where $\Psi$ is an
orientation-reversing isometry. The following example corresponds to
a reflection.

The product hypothesis in Proposition~\ref{prop:full-dimension-measure}
is essential. Let $g_0=\operatorname{diag}(1,-1)$ and
\[
 E=\{(a,g_0a):a\in\overline{B(0,10^{-3})}\}\subset\R^4.
\]
This set has Hausdorff dimension two and supports a $2$-Frostman
probability measure. For every $\theta$,
\[
 \det(R_\theta-g_0)
 = (\cos\theta-1)(\cos\theta+1)+\sin^2\theta=0.
\]
Since a rotation cannot equal a reflection, $R_\theta-g_0$ has rank one.
Thus, $T_{R_\theta}(E)$ lies in a line for every rotation, and
$\SO$ has Haar measure $1/2$ in $\OO$. In Cayley coordinates, if
$a=(a_1,a_2)$ and $b=g_0a$, then
\[
 p=(0,2a_2),\qquad v=(0,2a_1),\qquad
 \ell_{p,v}=\{(u,0,2a_2+2ua_1):u\in\R\}.
\]
All these lines lie in the plane $x_1=0$.
The prism estimate \eqref{fd:eq:prism-bound} excludes this
concentration for product measures whenever $s>1$. Indeed, the segments
in this plane lie in prisms $W$ with transverse widths $\rho$ and one,
while \eqref{fd:eq:prism-bound} gives
$\eta(\mathcal A(W))\le C\rho^{s-1}\to0$ for product measures.

\begin{figure}[htbp]
\centering
\begin{tikzpicture}[x={(1.65cm,0cm)},y={(-0.55cm,-0.40cm)},
                    z={(0cm,1.30cm)},>=Latex,font=\small]
  \fill[gray!12] (1.7,-1.15,-0.85)--(1.7,1.15,-0.85)
       --(1.7,1.15,1.25)--(1.7,-1.15,1.25)--cycle;
  \draw[gray!65] (1.7,-1.15,-0.85)--(1.7,1.15,-0.85)
       --(1.7,1.15,1.25)--(1.7,-1.15,1.25)--cycle;
  \node[above] at (1.7,-1.15,1.25) {$u=u_0$};
  \draw[->,gray!80] (-1.8,0,0)--(3.8,0,0) node[right] {$u$};
  \draw[->,gray!80] (0,-0.85,0)--(0,1.2,0) node[below left] {$x_1$};
  \draw[->,gray!80] (0,0,-0.7)--(0,0,1.55) node[above] {$x_2$};
  \draw[thick] (-1.5,0.17,0.10)--(2.8,0.686,0.96);
  \node[above right] at (2.8,0.686,0.96) {$\ell_{p,v}$};
  \fill (0,0.35,0.40) circle (1.7pt);
  \node[above left] at (0,0.35,0.40) {$(0,p)$};
  \fill (1.7,0.554,0.74) circle (1.9pt);
  \node[above left,yshift=6pt] at (1.7,0.554,0.74) {$(u_0,p+u_0v)$};
  \draw[thick,dashed] (-1.5,-0.22,-0.525)--(2.8,-0.736,0.12);
  \node[below right] at (2.8,-0.736,0.12) {$\ell_{p',v'}$};
  \fill (1.7,-0.604,-0.045) circle (1.7pt);
\end{tikzpicture}
\caption{Two lines in Cayley coordinates. The plane $u=u_0$ meets
$\ell_{p,v}$ at $(u_0,P_{u_0}(p,v))=(u_0,p+u_0v)$.
The drawing is schematic.}
\label{fd:fig:cayley}
\end{figure}

The maps $P_u(p,v)=p+uv$ form a restricted family of projections from
$\R^4$ to $\R^2$. Related families in $\R^3$ were studied by
F\"assler and Orponen \cite{FO}. For line projections,
K\"aenm\"aki, Orponen and Venieri \cite{KOV} treated a model circle
of directions, Pramanik, Yang and Zahl \cite{PYZ} treated general
nondegenerate curves, and Gan, Guth and Maldague \cite{GGM} proved
exceptional-set estimates. Gan, Guo, Guth, Harris, Maldague and Wang
\cite{GGGHMW} treated plane projections. These results do not directly
apply to our family. Here we also use the product concentration
estimate \eqref{fd:eq:prism-bound}.

Our argument gives Haar nullity of the exceptional rotations, without
a quantitative bound for their Hausdorff dimension. Such a bound would
require further estimates, for example estimates averaged against
Frostman measures on the rotation group.
\section{Analytic estimates}
\label{sec:analytic-estimates}

We prove the estimates needed for the positive Lebesgue measure theorem.
We use the two-ends theorem to prove a dual estimate for a Frostman
measure. Refined decoupling gives the
corresponding Fourier estimate when each selected strip has small mass.
We also record the annular and angular multiplier estimates used to
combine these bounds.
\begin{lemma}\label{lem:frostman-annular}
Let $0<t\le2$ and let $\omega$ be a finite positive measure with
compact support in a fixed ball, satisfying
$\omega(B(x,r))\le C_\omega r^t$ for $x\in\R^2$, $0<r\le1$.
Then

\begin{equation}\label{eq:frostman-annular}
 \int_{R/2\le|\xi|\le2R}|\widehat\omega(\xi)|^2\dd\xi
 \lesssim R^{2-t}\omega(\R^2)
\end{equation}

for $R\ge1$. Equivalently,

\begin{equation}\label{eq:frostman-annular-polar}
 \int_{R/2}^{2R}rS_\omega(r)\dd r\lesssim R^{2-t}\omega(\R^2).
\end{equation}

\end{lemma}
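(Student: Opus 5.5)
We will prove \eqref{eq:frostman-annular} by a smooth majorant and Parseval, which reduces the Fourier integral to an energy of $\omega$ at scale $1/R$; the polar form \eqref{eq:frostman-annular-polar} then follows by writing $d\xi=r\dd r\dd\sigma(e)$ with the normalization of $\sigma$, which changes the constant by $2\pi$ at most.

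Fix a Schwartz function $\psi$ on $\R^2$ with $\widehat\psi\ge0$ everywhere, $\widehat\psi\ge1$ on $\{1/2\le|\xi|\le2\}$, and $\psi$ itself rapidly decaying. One can take $\widehat\psi=\Phi*\widetilde\Phi$ for a real bump $\Phi$ chosen so that this convolution is at least one on the annulus (after rescaling). Set $\psi_R(x)=R^2\psi(Rx)$, so $\widehat{\psi_R}(\xi)=\widehat\psi(\xi/R)$. Then
\[
 \int_{R/2\le|\xi|\le2R}|\widehat\omega(\xi)|^2\dd\xi
 \le\int\widehat\psi(\xi/R)|\widehat\omega(\xi)|^2\dd\xi
 =\iint\psi_R(x-y)\dd\omega(x)\dd\omega(y),
\]
where the last identity is Parseval applied to the finite measure $\omega$ and the Schwartz function $\psi_R$. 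Fubini is justified by the compact support of $\omega$.

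It remains to bound the double integral by $R^{2-t}\omega(\R^2)$. Use $|\psi_R(z)|\le C_N R^2(1+R|z|)^{-N}$ with $N>2$, and decompose in $y$ for fixed $x$ into the ball $B(x,1/R)$ and the dyadic shells $2^{k-1}/R\le|x-y|<2^k/R$, $k\ge1$. For $2^k/R\le1$, the Frostman bound gives shell mass at most $C_\omega(2^k/R)^t$. For larger $k$, use the total mass $\omega(\R^2)$ together with the boundedness of the support; equivalently, the bound $C_\omega\min\{r,1\}^t$ also covers $r>1$ after enlarging the constant, because the support lies in a fixed ball. This yields
\[
 \int|\psi_R(x-y)|\dd\omega(y)\lesssim R^2\sum_{k\ge0}2^{-kN}(2^k/R)^t+R^2\sum_{2^k>R}2^{-kN}\omega(\R^2)\lesssim R^{2-t}
\]
uniformly in $x$, since $N>t$ and the second sum is $O(R^{2-N})$. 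Integrating in $x$ against $\omega$ gives $\lesssim R^{2-t}\omega(\R^2)$.

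The only mildly delicate points are the choice of a majorant with nonnegative Fourier transform, which is needed so that the positivity step is legitimate, and the tail of the shell sum. For the tail one must note that the implicit constant in the second sum involves $\omega(\R^2)$, itself bounded by $C_\omega$ times a constant from the support ball, so the estimate stays linear in $\omega(\R^2)$ up to constants depending on $C_\omega$.
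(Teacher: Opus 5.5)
Your proposal is correct and follows essentially the same route as the paper: majorize the frequency region by a nonnegative smooth cutoff, use Parseval to rewrite the integral as the energy $\iint R^2\check\psi(R(x-y))\,d\omega(x)\,d\omega(y)$, and bound this by the rapid decay of the kernel combined with the Frostman bound, extended to all radii via the bounded support and summed over dyadic shells. The only cosmetic difference is that the paper takes the cutoff to be at least one on the whole ball $B(0,2)$. It does not need the positive-definite construction $\Phi*\widetilde\Phi$: any nonnegative $C_c^\infty$ function that is at least one on the annulus already makes your positivity step legitimate.
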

\begin{proof}
Bounded support extends the Frostman bound to radii greater than
one after multiplying its constant by a fixed factor.
We choose a nonnegative $\Phi_{\rm ann}\in C_c^\infty(\R^2)$ that is at least one on
$B(0,2)$. Fourier inversion, rapid decay of $\check\Phi_{\rm ann}$, and the
Frostman bound give

\begin{align*}
 \int_{|\xi|\le2R}|\widehat\omega(\xi)|^2\dd\xi
 &\le \int\Phi_{\rm ann}(\xi/R)|\widehat\omega(\xi)|^2\dd\xi\\
 &\lesssim R^2\sum_{j\ge0}2^{-jN}
   \int\omega(B(x,2^jR^{-1}))\dd\omega(x)
 \lesssim R^{2-t}\omega(\R^2),
\end{align*}

where $N>t$ is fixed. This proves \eqref{eq:frostman-annular}, and
polar coordinates give \eqref{eq:frostman-annular-polar}.
This completes the proof.
\end{proof}

We shall also use the following consequence of the Frostman condition.
If $F$ has Fourier support in $B(0,CR)$, then

\begin{equation}\label{eq:trace}
 \|F\|_{L^2(\omega)}^2\lesssim R^{2-t}\|F\|_2^2.
\end{equation}

A nonnegative even rapidly decreasing majorant $\Phi_R(x)=R^2\Phi(Rx)$
of a reproducing kernel satisfies $|F|^2\lesssim |F|^2*\Phi_R$.
The Frostman condition gives $\omega*\Phi_R\lesssim R^{2-t}$. Thus,
by Fubini's theorem,
\[
 \int|F|^2\,d\omega
 \lesssim\int |F(x)|^2(\omega*\Phi_R)(x)\,dx
 \lesssim R^{2-t}\|F\|_{2}^{2},
\]
which proves \eqref{eq:trace}. These bounds use the original Frostman
constant, so they also apply to restrictions without renormalization.

For $0<w<1/2$, we write $L_w=\log(e/w)$. A shading of a line $\ell$ is a union $Y(\ell)$ of $w$-balls
with centres on $\ell$. It satisfies the $(\alpha_{\rm te},\beta_{\rm te})$ two-ends condition if
\[
 |Y(\ell)\cap B(z,w^{\alpha_{\rm te}})|\le Cw^{\beta_{\rm te}}|Y(\ell)|
\]
for every centre $z$, with a fixed constant $C$.
We use the general two-ends theorem of Wang and Wu
\cite[Theorem~1.5]{WW}. It extends their earlier direction-separated
result \cite{WW2024} to Katz--Tao families. O'Regan, Wu and Yi
\cite{OWY} give a simpler proof and an extension to transversal curves.
For the connection with projections, see Orponen and Shmerkin
\cite{OS}; the sharp planar Furstenberg bound is due to Ren and Wang
\cite{RW}.

A \(w\)-separated family of line parameters is a
Katz--Tao \((w,t)\)-set if every parameter ball of radius \(r\ge w\)
contains at most \(C_{\rm KT}(r/w)^t\) parameters. In a bounded slope
chart, the parameter of the line \(h_{\rm du}=b_{\rm lin}+k_{\rm lin}k\) is \((k_{\rm lin},b_{\rm lin})\). For a union
\(Y(\ell)\) of \(w\)-balls centred on a bounded part of a line, define
\[
 \Gamma_{Y,2-t}=
 \sup_{\ell,\,w\le r\le1,\,z\in\ell}
 (w/r)^{2-t}w^{-2}|Y(\ell)\cap B(z,r)|.
\]
In \cite[Theorem~1.5]{WW}, the scale $\delta$ is our $w$, the
two-ends exponents $(\varepsilon_1,\varepsilon_2)$ are our
$(\alpha_{\rm te},\beta_{\rm te})$, and $\gamma_{Y,t^*}$ is our $\Gamma_{Y,2-t}$,
since $t^*=\min\{t,2-t\}=2-t$ for $1<t<2$.

\begin{theorem}[Wang--Wu]\label{thm:WW}
Let \(1<t<2\) and \(0<\beta_{\rm te}<\alpha_{\rm te}<1\). Suppose that the lines form a
Katz--Tao \((w,t)\)-set and that their \(w\)-ball shadings satisfy the
\((\alpha_{\rm te},\beta_{\rm te})\)-two-ends condition. Suppose also that each shading
has area at least \(c\lambda w\), for some \(\lambda>0\) and fixed \(c>0\),
and lies in a fixed bounded region.
Then, for every
\(\varepsilon>0\),
\begin{equation}\label{eq:WW}
 \left|\bigcup_\ell Y(\ell)\right|
 \gtrsim
 w^{\varepsilon+t\alpha_{\rm te}/2}\lambda^{1/2}w^{(t-1)/2}
 \Gamma_{Y,2-t}^{-1/2}\sum_\ell |Y(\ell)|.
\end{equation}
The implicit constant may depend on \(\varepsilon,\alpha_{\rm te},\beta_{\rm te}\), the
two-ends constant, the Katz--Tao constant, and the fixed bounded region.
\end{theorem}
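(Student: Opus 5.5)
This statement is an imported result: it is \cite[Theorem~1.5]{WW} written in our notation, so the plan is a careful translation check rather than a new argument. The order of work is as follows.

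First, match the objects. Our \(w\) is their \(\delta\). Our Katz--Tao \((w,t)\)-set of line parameters \((k_{\rm lin},b_{\rm lin})\) in a bounded slope chart is their Katz--Tao family of \(\delta\)-tubes. Our unions of \(w\)-balls centred on the lines are their shadings. Our \((\alpha_{\rm te},\beta_{\rm te})\) two-ends condition is theirs with \((\varepsilon_1,\varepsilon_2)\).

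Second, verify that our \(\Gamma_{Y,2-t}\) equals their \(\gamma_{Y,t^*}\). For \(1<t<2\) we have \(t^*=\min\{t,2-t\}=2-t\). The normalisation \((w/r)^{2-t}w^{-2}|Y(\ell)\cap B(z,r)|\) should be compared term by term with their definition. Its key property is that a full tube segment of length \(r\) gives \(\Gamma\sim (w/r)^{2-t}r/w\), which is the expected scaling.

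Third, rewrite their conclusion, which they state for the tube family, in our form. Their lower bound reads
\[
 \delta^{\varepsilon+t\varepsilon_1/2}\lambda^{1/2}\delta^{(t-1)/2}\gamma^{-1/2}\sum|Y|.
\]
Our hypothesis that each shading has area at least \(c\lambda w\) corresponds to their density parameter \(\lambda\), since a unit tube has area about \(w\). A fixed constant \(c\) only changes the implicit constant. The bounded region, together with the bounded slope chart, lets us pass between point--line duality coordinates and their tube setup by a bi-Lipschitz change of variables. That change only alters constants, including the Katz--Tao and two-ends constants.

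The main obstacle is bookkeeping. One must confirm that their theorem is stated for a shading by \(\delta\)-balls and not by subsets of tubes, or else reduce one case to the other. Replacing a ball union by its intersection with the tube changes areas by bounded factors. One must also track the exact power of \(\lambda\) and the factor \(w^{t\alpha_{\rm te}/2}\) as they appear in their Theorem~1.5. If their hypotheses require \(\lambda\) to be a lower bound for every shading's relative density, that is exactly what we assume. No further proof is needed beyond this dictionary.
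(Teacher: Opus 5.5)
Your proposal is correct and matches the paper. The paper also simply imports \cite[Theorem~1.5]{WW} via the dictionary $\delta=w$, $(\varepsilon_1,\varepsilon_2)=(\alpha_{\rm te},\beta_{\rm te})$, $\gamma_{Y,t^*}=\Gamma_{Y,2-t}$ with $t^*=\min\{t,2-t\}=2-t$. The conventions the paper checks explicitly in Remark~\ref{rem:WW-conventions} are that Wang--Wu state the two-ends condition with $w\times w^{\alpha_{\rm te}}$ tube segments rather than balls of radius $w^{\alpha_{\rm te}}$, which agree up to constants, and that a fixed dilation into the unit ball changes all radii by fixed factors, with radii between one and the ambient diameter in the definition of $\Gamma_{Y,2-t}$ handled by a fixed covering.
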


\begin{remark}\label{rem:WW-conventions}
The two-ends condition in \cite[Definition~1.2]{WW} is stated for
$w\times w^{\alpha_{\rm te}}$ tubes along a line. Our ball condition implies that
condition, since such a tube is covered by a fixed number of balls of
radius $w^{\alpha_{\rm te}}$. Conversely, the part of a ball of that radius within
a $w$-neighbourhood of a line is covered by a fixed number of those tubes.
Thus, the two formulations differ only in their constants.

We shall also use \eqref{eq:WW} for unions of $Cw$-balls, where $C\ge1$
is fixed and the other hypotheses are imposed on these shadings.
Here is the reduction. The Katz--Tao bound gives a fixed upper bound for
the number of line parameters within distance $2Cw$ of any one parameter.
A finite colouring therefore partitions the lines into $Cw$-separated
families. One family retains a fixed proportion of $\sum\limits_\ell|Y(\ell)|$.
On that family we apply the theorem at scale $\widetilde w=Cw$.
A ball of radius $\widetilde w^{\alpha_{\rm te}}$ is covered by a fixed number of balls of radius $w^{\alpha_{\rm te}}$, so the two-ends condition persists. Moreover,
\[
 \widetilde\lambda\sim\lambda,
 \qquad \Gamma_{Y,2-t}^{(\widetilde w)}
 \lesssim_C\Gamma_{Y,2-t}^{(w)},
\]
where the superscript indicates the scale used in the definition.
The latter bound follows directly by restricting the supremum to
$r\ge Cw$ and comparing the powers of the two scales. Substitution in
\eqref{eq:WW} proves the claimed version. A fixed dilation places the
ambient region in the unit ball used in \cite{WW}. It changes areas,
line-parameter distances and all radii by fixed factors; radii between
one and the original ambient diameter are handled by a fixed covering.
The same argument therefore proves the stated bounded-region version.

\end{remark}

\begin{lemma}\label{lem:KT-selection}
Fix $C_{\rm sep}\ge1$. Let $0<t\le2$ and let $\omega$ be a finite positive measure in a fixed bounded square with
$\omega(B(x,r))\le C_F r^t$, where $C_F>0$. Let $E$ be measurable
with $\omega(E)>0$, and let $0<w<1$. There is a finite
\(C_{\rm sep}w\)-separated set \(\mathcal {X}\subset E\) with
$|\mathcal X|\gtrsim\omega(E)/(C_Fw^t)$ and
\begin{equation}\label{eq:KT-selection}
 |\mathcal X\cap B(z,r)|\lesssim (r/w)^t
 \qquad(w\le r\le1).
\end{equation}
The constants may depend on $t$, the ambient square, and $C_{\rm sep}$, but
not on $C_F$, $E$, or $w$.
\end{lemma}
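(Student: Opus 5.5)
The plan is a greedy selection on a grid adapted to the scale \(w\), with the lower bound on the number of selected points coming from the Frostman condition.

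First partition the ambient square into half-open dyadic-type squares of side \(w\). Call such a square \emph{occupied} if it meets \(E\) in a set of positive \(\omega\)-measure; only occupied squares will be used. By the Frostman hypothesis, each square of side \(w\) lies in a ball of radius \(\sqrt2\,w\), so it carries \(\omega\)-mass at most \(C C_F w^t\). The occupied squares cover \(E\) up to an \(\omega\)-null set. Hence the number \(N\) of occupied squares satisfies
\[
 N\ge \frac{\omega(E)}{C C_F w^t}.
\]

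Next enforce \(C_{\rm sep}w\)-separation by colouring. Choose an integer \(M\ge C_{\rm sep}+2\), depending only on \(C_{\rm sep}\). Split the grid into \(M^2\) residue classes modulo \(M\) in each coordinate. Two distinct squares in the same class have centres at distance at least \(Mw\). Since \(N\) is finite for bounded ambient square, pigeonhole gives a class containing at least \(N/M^2\) occupied squares. In each such square, pick one point of \(E\); this is possible because the square meets \(E\) in positive measure, and measurability plays no role in picking points. The selected points are then at distance at least \((M-2)w\ge C_{\rm sep}w\) from one another. This gives a finite set \(\mathcal X\subset E\) with \(|\mathcal X|\gtrsim\omega(E)/(C_Fw^t)\), where the constant depends only on \(C_{\rm sep}\).

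It remains to check \eqref{eq:KT-selection}, which is the only step needing care. A ball \(B(z,r)\) with \(w\le r\le1\) meets at most \(C(r/w)^2\) squares of side \(w\), and \(\mathcal X\) has at most one point per square. This counting gives only exponent \(2\), not \(t\), so pure counting is insufficient for \(t<2\). Instead I will use \(\omega\) again by replacing the naive pick with a thinning step, which I expect to be the main obstacle.

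The thinning works as follows. For each occupied square \(Q\) in the chosen class, keep it with probability \(p_Q=\omega(Q\cap E)/(C C_F w^t)\le 1\); these are valid probabilities by the mass bound above. Alternatively, keep it deterministically in proportion to its mass via a greedy sweep over dyadic scales. In the random version:
\begin{itemize}
\item The expected number of kept squares is \(\gtrsim\omega(E)/(C_F w^t)\).
\item For a ball \(B(z,r)\), the expected number of kept squares inside it is at most \(\omega(B(z,2r))/(C C_F w^t)\lesssim (r/w)^t\). Crucially, this is independent of \(C_F\).
\end{itemize}
A Chernoff bound of the form \eqref{fd:eq:bernoulli}, combined with a union bound over a polynomial-size net of dyadic balls, then gives all the ball bounds up to a logarithmic loss. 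To remove the logarithm, I would instead argue deterministically. Process dyadic squares from coarse to fine, and cap the number of selected points inside each dyadic square \(D\) of side \(r\) at \(\lceil (r/w)^t\rceil\), discarding excess points. Since each occupied square contributes \(\omega\)-mass at most \(CC_Fw^t\), the discarded points are few. Specifically, a square \(D\) in which the cap binds must satisfy \(\omega(D\cap E)\gtrsim C_F r^t\) in normalized terms. The total loss can then be charged against \(\omega(E)/(C_Fw^t)\) with a factor smaller than one, provided the caps are taken with a large constant depending only on \(t\). Every ball of radius \(r\) is covered by a bounded number of dyadic squares of side comparable to \(r\), so the caps yield \eqref{eq:KT-selection}.
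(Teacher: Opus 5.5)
Your proposal has a genuine gap: the deterministic capping step is what is meant to remove the logarithm, and its justification fails. The random thinning is a reasonable idea, but as you note, the union bound over a net only controls balls with $(r/w)^t\gtrsim\log(1/w)$. That gives a constant depending on $w$, which the lemma forbids, so everything rests on the capping.

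\emph{The key claim is false.} You assert that a dyadic square $D$ of side $r$ in which the cap binds satisfies $\omega(D\cap E)\gtrsim C_F r^t$. For the selection you apply it to (one point per occupied $w$-square), this fails. A square is occupied as soon as it carries positive mass. The Frostman bound $\omega(Q)\le CC_Fw^t$ is an upper bound, which is the wrong direction for turning point counts into mass.

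\emph{Counterexample.} Take $\omega=\varepsilon\,\mathcal L^2$ on the unit square and let $E$ be the whole square. Then $C_F\approx\varepsilon$ and $\omega(E)/(C_Fw^t)\approx w^{-t}$. Every $w$-square is occupied, so a dyadic square of side $r$ holds $\approx(r/w)^2$ candidates. For $t<2$ the cap therefore binds at all scales $r\gg w$, yet $\omega(D\cap E)=\varepsilon r^2\ll C_Fr^t$. About $w^{-2}$ points are discarded, far more than $\omega(E)/(C_Fw^t)$, so the losses cannot be charged against that quantity.

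\emph{The coarse-to-fine order also fails on its own.} Suppose the root cap happens to retain only points inside one dyadic subsquare of side $\approx w^{1-t/2}$. The cap at that subsquare then leaves $\lesssim w^{-t^2/2}\ll w^{-t}$ points.

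\emph{The missing idea} is to obtain the lower bound by comparing the integral selection with a mass-proportional fractional one, not by counting occupied squares. The paper puts capacities $b(Q)=\lfloor(\operatorname{side}(Q)/w)^t\rfloor$ on the dyadic tree. The values $f(Q)=\omega(E\cap Q)/(CC_Fw^t)$ form a feasible flow: the Frostman bound gives $f(Q)\le\frac12(\operatorname{side}(Q)/w)^t\le b(Q)$, and $f\le1$ on terminal squares. The integral max-flow theorem then gives at least $f(Q_0)$ selected terminal squares, with at most $b(Q)$ below each $Q$. Colouring afterwards gives the separation.

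Your capping can be repaired in the same spirit in either of two ways.
\begin{enumerate}
\item Process from fine to coarse. Keep one point in each occupied terminal square. At each dyadic square $D$, keep $\min\{b(D),N(D)\}$ of the $N(D)$ points already kept in its children. Induction shows at least $f(D)$ points are kept in $D$, since $f(D)\le b(D)$, $f$ is additive, and $f\le1$ on terminal squares.
\item Keep your random thinning, then delete every point lying in a dyadic square of side $r$ that received more than $A(r/w)^t$ points. For $A$ large depending on $t$, the Bernoulli tail summed over all dyadic squares bounds the expected number of deletions by a small multiple of the expected count, uniformly in $w$. This needs no union bound, hence no logarithm.
\end{enumerate}
In either repair, choose the colour class by $\omega$-mass rather than by the number of occupied squares, or colour after selecting as the paper does.
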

\begin{proof}
Replacing \(w\) by a comparable dyadic length changes only the constants.
We shall select occupied squares subject to an upper bound in every
larger dyadic square. We use the measure to define a fractional flow,
then apply the integral max-flow theorem to obtain the selected squares.
Consider the finite dyadic tree from a fixed ambient square down to
squares of side \(w\), with edges directed away from the root. We give the edge
entering a square \(Q\) the capacity
\[
 b(Q)=\lfloor(\operatorname{side}(Q)/w)^t\rfloor.
\]
Let \(Q_0\) denote the root square. We join the source to \(Q_0\) by an
edge of capacity \(b(Q_0)\).
We join every terminal square meeting \(E\) in positive \(\omega\)-measure
to a sink by an edge of capacity one. If \(C\) is a sufficiently large
absolute constant, then
\[
 \frac{\omega(E\cap Q)}{CC_Fw^t}
 \le \tfrac12(\operatorname{side}(Q)/w)^t\le b(Q),
\]
and the left side is at most one for a terminal square. Additivity of
the measure makes these values a feasible flow of total value
\(\omega(E)/(CC_Fw^t)\). Since all capacities
are integers, the integral max-flow theorem \cite{FF} gives an integral flow of at
least this value. It therefore selects occupied terminal squares, at most
one from each terminal square and at most \(b(Q)\) below every dyadic
square \(Q\).

We choose one point of \(E\) in each selected terminal square. A ball of
radius \(r\ge w\) meets a bounded number of dyadic squares with side
between \(r\) and \(2r\), so the capacity bound gives
\eqref{eq:KT-selection}. Finally, we colour the terminal grid by the two
coordinate indices modulo a fixed integer larger than \(C_{\rm sep}+3\). A largest
colour class retains a fixed proportion of the points and has the required
separation. If \(\omega(E)/(C_Fw^t)<1\), the positive feasible flow still implies
that the integral maximum flow has value at least one, so the same argument
applies. This completes the proof.
\end{proof}

\begin{lemma}\label{lem:dual}
Let \(\omega\) be a compactly supported positive measure of mass at most one
with a \(t\)-Frostman upper bound, where
\(1<t<2\), and fix \(0<\beta_{\rm te}<\alpha_{\rm te}<1\).
In a fixed bounded slope chart, associate \(x=(x_1,x_2)\) with
\[
 \ell_x=\{(k,h_{\rm du}):h_{\rm du}=x_2-kx_1\}.
\]
Let \(x\mapsto D_x\) be a measurable family of unions of grid intervals
of length comparable to \(w\). For each nonempty \(D_x\), let
\(Y_x\) be the union of one \(Cw\)-ball centred on \(\ell_x\)
above each grid interval in \(D_x\). Suppose that these shadings satisfy the
\((\alpha_{\rm te},\beta_{\rm te})\)-two-ends
condition and lie in a measurable set \(S\). Then, for every
\(\varepsilon>0\),
\begin{equation}\label{eq:dual}
 \int|D_x|\dd\omega(x)
 \lesssim L_w
 w^{-(2\varepsilon+t\alpha_{\rm te})/(4-t)}|S|^{2/(4-t)}.
\end{equation}
The implicit constant may depend on the constants listed in
Theorem~\ref{thm:WW}, on the Frostman constant, and on the fixed charts,
but not on \(w\), \(S\), or the family \(D_x\).
\end{lemma}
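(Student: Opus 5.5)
We will discretize $\omega$ at scale $w$ with Lemma~\ref{lem:KT-selection}, stratify the points by the size of $D_x$, and apply Theorem~\ref{thm:WW} (in the $Cw$-ball form of Remark~\ref{rem:WW-conventions}) to each stratum. Set $J=\int|D_x|\dd\omega(x)$. Since $D_x$ lies in a bounded chart, $|D_x|\lesssim1$, and points with $|D_x|\le w$ contribute at most $w$. We may assume $J\ge w^{1/2}$; otherwise the claim is trivial, because the right side of \eqref{eq:dual} is at least $\gtrsim w$ whenever $|S|\gtrsim w^{2}$, which holds as soon as any shading is present. Pigeonholing over $O(L_w)$ dyadic values of $\lambda$ gives a measurable set $E=\{x:|D_x|\sim\lambda w\}$ with
\[
 \lambda w\,\omega(E)\gtrsim J/L_w .
\]
This pigeonholing is the source of the factor $L_w$ in \eqref{eq:dual}. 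Here $\lambda$ is the number of grid intervals, so $1\le\lambda\lesssim w^{-1}$.

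Next we apply Lemma~\ref{lem:KT-selection} to $\omega|_E$ with a large separation constant. This gives a $C_{\rm sep}w$-separated set $\mathcal X\subset E$ with $|\mathcal X|\gtrsim\omega(E)w^{-t}$ and the Katz--Tao bound \eqref{eq:KT-selection}. The map $x\mapsto\ell_x$, with line parameter $(-x_1,x_2)$, is an isometry. Hence $\{\ell_x:x\in\mathcal X\}$ is a Katz--Tao $(w,t)$-set of lines. The shadings $Y_x$ satisfy the two-ends hypothesis and lie in $S$. Since they consist of about $\lambda$ boundedly overlapping $Cw$-balls, $|Y_x|\sim\lambda w^2$, which is area $\gtrsim\lambda' w$ with $\lambda'=\lambda w$. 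For $\Gamma_{Y,2-t}$ we use only the trivial bound: a ball $B(z,r)$ meets $Y_x$ in area $\lesssim rw$, so
\[
 (w/r)^{2-t}w^{-2}rw\le (w/r)^{2-t}(r/w)\le (r/w)^{t-1}\le w^{1-t}.
\]
A sharper bound would use $|Y\cap B|\le\min\{rw,\lambda w^2\}$, but the trivial one should suffice for the stated exponent.

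Theorem~\ref{thm:WW} then gives
\[
 |S|\ge\Bigl|\bigcup Y_x\Bigr|\gtrsim w^{\varepsilon+t\alpha_{\rm te}/2}(\lambda w)^{1/2}w^{(t-1)/2}\Gamma^{-1/2}\,|\mathcal X|\lambda w^{2}.
\]
Inserting $\Gamma\le w^{1-t}$ and $|\mathcal X|\gtrsim\omega(E)w^{-t}$ yields
\[
 |S|\gtrsim w^{\varepsilon+t\alpha_{\rm te}/2}\,\lambda^{3/2}w^{3/2}\,\omega(E)\,w^{-1/2}.
\]
Writing $X=\lambda w\,\omega(E)\gtrsim J/L_w$, we get $|S|\gtrsim w^{\varepsilon+t\alpha_{\rm te}/2}\,X\,(\lambda w)^{1/2}\,w^{-1/2}$. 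To eliminate $\lambda$ we use $\omega(E)\le1$, which gives $\lambda w\ge X$, and hence $|S|\gtrsim w^{\varepsilon+t\alpha_{\rm te}/2}X^{3/2}w^{-1/2}$. This yields an exponent $2/3$, not $2/(4-t)$. The correct interpolation must therefore use the Frostman mass more carefully, and this is the main obstacle.

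The fix is to keep the Katz--Tao count, $|\mathcal X|\lesssim w^{-t}$, in place of the crude bound $\omega(E)\le1$, and to use a sharper estimate of $\Gamma$ via $\min\{rw,\lambda w^2\}$. The resulting bound is $\Gamma\lesssim\lambda^{t-1}$, attained at $r\sim\lambda w$. With it, $|S|\gtrsim w^{\varepsilon+t\alpha_{\rm te}/2}\,\lambda^{(4-t)/2}\,w^{(t+2)/2}\,|\mathcal X|$, and substituting $|\mathcal X|\gtrsim\omega(E)w^{-t}$ gives $w^{\varepsilon+t\alpha_{\rm te}/2}\,\lambda^{(4-t)/2}\,w^{(2-t)/2}\,\omega(E)$. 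Combining this with $\omega(E)\le1$ in the form
\[
 X^{(4-t)/2}\le(\lambda w)^{(4-t)/2}\,\omega(E)
\]
gives
\[
 |S|\gtrsim w^{\varepsilon+t\alpha_{\rm te}/2}\,w^{-1}\,X^{(4-t)/2}.
\]
The extra factor $w^{-1}$ only helps. Solving for $X$ gives
\[
 X\lesssim w^{-(2\varepsilon+t\alpha_{\rm te})/(4-t)}|S|^{2/(4-t)},
\]
and therefore $J\lesssim L_wX$, which is \eqref{eq:dual}. Measurability of the stratum $E$ follows from the measurability of $x\mapsto D_x$. We expect the main work to lie in checking the $\Gamma$ bound and the exponent bookkeeping.
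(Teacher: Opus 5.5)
Your argument is the paper's proof. It uses the $O(L_w)$ classes by the size of $|D_x|$ (pigeonholing one class instead of summing over them is equivalent), Katz--Tao extraction via Lemma~\ref{lem:KT-selection}, the bound $\Gamma_{Y,2-t}\lesssim\lambda^{t-1}$ from $|Y_x\cap B(z,r)|\lesssim\min\{rw,\lambda w^2\}$, Theorem~\ref{thm:WW}, and $\omega(E)\le1$ at the end. The conclusion is correct, but two steps as written are wrong and should be fixed.

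First, the $w$-bookkeeping in your final chain is off. The powers of $w$ in $(\lambda w)^{1/2}\,w^{(t-1)/2}\,\lambda w^2$ add up to $(t+4)/2$, not $(t+2)/2$. With $|\mathcal X|\gtrsim\omega(E)w^{-t}$, Theorem~\ref{thm:WW} therefore gives
\[
 |S|\gtrsim w^{\varepsilon+t\alpha_{\rm te}/2}(\lambda w)^{(4-t)/2}\omega(E)
 \ge w^{\varepsilon+t\alpha_{\rm te}/2}X^{(4-t)/2}.
\]
This is \eqref{eq:weighted-dual-key} with the paper's $\lambda$ equal to your $\lambda w$. There is no extra factor $w^{-1}$; a free power gain over a sharp two-ends estimate should have been a warning sign. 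However, the corrected inequality is exactly what you then solve for $X$, so your final bound stands. The abandoned first attempt has a similar slip: the factor $w^{-1/2}$ there is spurious, though this is harmless. Note also that the improvement over that first attempt comes entirely from the sharper bound on $\Gamma$. The upper count $|\mathcal X|\lesssim w^{-t}$ plays no role, and $\omega(E)\le1$ is still what closes the argument.

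Second, delete the reduction to $J\ge w^{1/2}$. Its justification is false. For $|S|\sim w^2$, the right side of \eqref{eq:dual} is about $L_w w^{(4-2\varepsilon-t\alpha_{\rm te})/(4-t)}$, which is $o(w)$ as $w\to0$ once $2\varepsilon<t(1-\alpha_{\rm te})$. In any case, $J<w^{1/2}$ would not give $J\lesssim w$. The step is also never used: the pigeonholing needs only $J>0$, and Lemma~\ref{lem:KT-selection} still returns at least one point when $\omega(E)w^{-t}<1$.
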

\begin{proof}
We group the points according to the lengths of their direction sets.
In each group, Lemma~\ref{lem:KT-selection} selects a Katz--Tao family
of dual lines. We then apply Theorem~\ref{thm:WW} to bound the measure
of that group in terms of $|S|$.

We divide the points for which \(D_x\ne\varnothing\) into \(O(L_w)\)
measurable classes
\[
 E_\lambda=\{x:\lambda\le |D_x|<2\lambda\},
 \qquad cw\le\lambda\le C,
\]
and consider a class with \(\omega(E_\lambda)>0\). We apply
Lemma~\ref{lem:KT-selection} to this class and denote the selected set by
\(\mathcal {X}\). The map
\(x\mapsto(-x_1,x_2)\), which gives the dual-line parameter, is an
isometry. The selected lines therefore form a Katz--Tao \((w,t)\)-set,
with
\[
 |\mathcal X|\gtrsim \omega(E_\lambda)w^{-t}.
\]
The separation constant in Lemma~\ref{lem:KT-selection} is chosen large
enough for the \(Cw\)-ball shadings. Bounded overlap along a line gives
\(|Y_x|\asymp\lambda w\). Moreover, for \(w\le r\le1\),
\[
 |Y_x\cap B(z,r)|\lesssim w\min\{r,\lambda\}.
\]
Hence,
\begin{equation}\label{eq:gamma-lambda}
 \Gamma_{Y,2-t}\lesssim(\lambda/w)^{t-1}.
\end{equation}
To see this, the expression defining \(\Gamma_{Y,2-t}\) is at most
\[
 \begin{cases}
 C(r/w)^{t-1},&r\le\lambda,\\
 C(w/r)^{2-t}\lambda/w,&r\ge\lambda.
 \end{cases}
\]
Since \(\lambda\gtrsim w\), both bounds are
\(\lesssim(\lambda/w)^{t-1}\).

We apply Theorem~\ref{thm:WW} with the $Cw$-ball convention in
Remark~\ref{rem:WW-conventions}. Substituting \eqref{eq:gamma-lambda}
and the lower bound for $|\mathcal X|$ into \eqref{eq:WW} gives

\begin{align}
 |S|&\gtrsim
 w^{\varepsilon+t\alpha_{\rm te}/2}\lambda^{1/2}w^{(t-1)/2}
 (\lambda/w)^{-(t-1)/2}|\mathcal X|\lambda w\notag\\
 &\gtrsim w^{\varepsilon+t\alpha_{\rm te}/2}
 \omega(E_\lambda)\lambda^{(4-t)/2}.
 \label{eq:weighted-dual-key}
\end{align}

Raising both sides to the power $2/(4-t)$ and using
$\omega(E_\lambda)\le1$, we obtain
\begin{align*}
 \omega(E_\lambda)\lambda
 &\lesssim \omega(E_\lambda)^{(2-t)/(4-t)}
 w^{-(2\varepsilon+t\alpha_{\rm te})/(4-t)}|S|^{2/(4-t)}\\
 &\le w^{-(2\varepsilon+t\alpha_{\rm te})/(4-t)}|S|^{2/(4-t)}.
\end{align*}
Summing over the \(O(L_w)\) classes proves \eqref{eq:dual}.
This completes the proof.
\end{proof}

We fix a nonnegative smooth partition of unity
$\{\chi_j\}_{j\in\mathbb Z/N_{w}\mathbb Z}$ on the circle, with
$N_{w}\asymp w^{-1}$, obtained by translating and rescaling one function. Its
supports have length $O(w)$ and bounded overlap, and
$\|\partial_\theta^k\chi_j\|_\infty\le C_kw^{-k}$.
If \(I\) is a consecutive set of indices, we define
\(\chi_I=\sum\limits_{j\in I}\chi_j\).
All radial cutoff functions in this section are smooth and supported in a
fixed annulus $\{c<|\xi|<C\}$ after scaling by $R$.

\begin{lemma}\label{lem:sector}
Let $\chi_I$ be as above, and let
$\phi$ be any fixed smooth annular cutoff.
Then, uniformly in $R\ge1$ and $I$,
\begin{equation}\label{eq:sector}
 \norm{\bigl[\phi(|\xi|/R)\chi_I(\xi/|\xi|)\bigr]^\vee}_1
 \less L_w.
\end{equation}
For two such cutoffs the corresponding product kernel has $L^1$ norm at
most $CL_w^2$. If an angular symbol is supported in an $O(w)$-arc and has
derivatives bounded by $C_kw^{-k}$, its annular kernel has uniformly
bounded $L^1$ norm.
The implicit constants may depend on the annular cutoff.
\end{lemma}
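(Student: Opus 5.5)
The plan is to reduce \eqref{eq:sector} to a sum of $O(L_w)$ kernels, each with uniformly bounded $L^1$ norm; this reduction is the only place the logarithm enters. We first prove the last assertion, for a single $O(w)$-arc symbol, by a scaling and integration-by-parts argument. Rotating, we may assume the arc is centred at angle $0$. The symbol $m(\xi)=\phi(|\xi|/R)a(\xi/|\xi|)$ is then supported in a sector of the annulus $|\xi|\sim R$, and this sector lies in a rectangle of dimensions $\sim R\times Rw$ aligned with $e_1$. In polar coordinates the radial derivative costs $R^{-1}$ and the angular derivative costs $w^{-1}$, and the angular derivative converts to a Cartesian derivative in the $e_2$ direction at a cost of $(Rw)^{-1}$. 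Hence $|\partial_{\xi_1}^k\partial_{\xi_2}^l m|\lesssim R^{-k}(Rw)^{-l}$. Integrating by parts then gives
\[
 |\check m(x)|\lesssim_N R^2w\,(1+R|x_1|)^{-N}(1+Rw|x_2|)^{-N},
\]
and this has $L^1$ norm $O(1)$ uniformly in $R$, $w$ and the arc.

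For a general consecutive index set $I$ we do not sum the $\chi_j$ one at a time, since that would cost $|I|$. Instead we use a dyadic (Whitney) decomposition of the arc toward its two endpoints. The symbol $\chi_I$ equals one in the interior of its arc, away from the $O(w)$ transition zones near the endpoints. We write $\chi_I$ as a sum over dyadic pieces $\psi_k\chi_I$, where the pieces sit at angular distance $\sim 2^kw$ from the nearer endpoint, $0\le k\lesssim L_w$, together with the two $O(w)$ endpoint pieces. The cutoffs $\psi_k$ are chosen with $\|\partial_\theta^n\psi_k\|_\infty\le C_n(2^kw)^{-n}$. On the support of $\psi_k$ with $k\ge1$ we have $\chi_I\equiv1$, so the piece $\psi_k\chi_I=\psi_k$ is a smooth symbol on an arc of length $2^kw$ with derivative bounds at that scale. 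The endpoint pieces are $O(w)$-arc symbols with $w^{-n}$ bounds, because the partition of unity has bounded overlap.

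The argument of the first paragraph, run with $w$ replaced by $2^kw$, gives $O(1)$ for each piece. If $2^kw\gtrsim1$, the piece is simply a smooth symbol on the annulus, still with $O(1)$ kernel. Summing the $O(L_w)$ pieces yields \eqref{eq:sector}. The point requiring care is the bookkeeping of this decomposition: the dyadic pieces must be built from a fixed smooth partition of unity adapted to the distance to the endpoints, and one must confirm that $\chi_I$ really equals one on the support of every $\psi_k$ with $k\ge1$, with no constants depending on $|I|$. When $I$ is the whole circle the symbol is purely radial and the bound is trivial.

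For the product of two such cutoffs, note that the kernel of a product of symbols is the convolution of the two kernels. Young's inequality then bounds its $L^1$ norm by the product of the two $L^1$ norms, which is $CL_w^2$. The only real obstacle is the uniformity of the decomposition just described; everything else is routine.
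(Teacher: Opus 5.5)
Your proposal is correct, but it gets the logarithm by a different mechanism than the paper.

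The $O(w)$-arc case is the same in both. Your anisotropic bounds $|\partial_{\xi_1}^k\partial_{\xi_2}^l m|\lesssim R^{-k}(Rw)^{-l}$ are what the paper obtains after setting $R=1$ and rescaling $\xi_2$ by $w$.

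For a long arc, the paper does not decompose the interior. It cuts $I$ into a bounded number of blocks of angular length at most $\pi/8$. It then writes each block symbol exactly as a difference $\Psi_a-\Psi_b$ of two smoothed Heaviside steps at scale $w$. The kernel of a single localized step is estimated directly by mixed integration by parts:
\[
 |\psi_{\rm end}^\vee(x_1,x_2)|\lesssim(1+|x_1|)^{-2}\min\{1,|x_2|^{-1},w^{-1}|x_2|^{-2}\}.
\]
The factor $L_w$ then comes from integrating the Hilbert-transform-like tail $|x_2|^{-1}$ over $1\le|x_2|\le w^{-1}$.

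Your Whitney decomposition toward the endpoints is the Littlewood--Paley version of the same fact. Instead of one kernel with a $1/|x_2|$ tail, you have $O(L_w)$ pieces, each with $O(1)$ kernel norm by reusing the single-arc estimate at scale $2^kw$.

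Your route is more modular: only one integration-by-parts estimate is needed, applied at every dyadic scale. The price is the bookkeeping you flag. You need a smooth partition adapted to the distance to the nearer endpoint, split near the midpoint of the arc. You also need to treat the pieces of scale $\gtrsim1$, the boundedly-many-cell case and the full-circle case separately. The paper's route avoids building that partition, at the cost of the explicit mixed-derivative $L^1$ bounds on $\psi_{\rm end}$.

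On the product statement: where the lemma is used, only one radial factor $\phi(|\xi|/R)$ appears. So, as the paper does, first insert an auxiliary radial cutoff $\tilde\phi$ equal to one on $\supp\phi$ and factor. Since the first part holds for any fixed annular cutoff, your Young's inequality argument then applies unchanged, and rotating one factor does not change its kernel norm.
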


\begin{proof}
We first rescale an arc of length $O(w)$ to obtain the uniform kernel
bound. For a longer arc, we estimate the two endpoint cutoffs separately.
Their kernels have a logarithmic $L^1$ bound.

By dilation, it suffices to take $R=1$. We may also assume that $w$
is sufficiently small; for $w$ bounded below the symbol has uniformly
bounded derivatives, and integration by parts gives the claim. First suppose that the angular
symbol is supported in an $O(w)$-arc. After a rotation, we rescale the second frequency coordinate by $w$.
The resulting symbol is supported in a fixed compact rectangle, with
the first frequency coordinate bounded away from zero, and all its derivatives are
bounded independently of $w$. Integration by parts and the change of
variables give a bounded $L^1$ norm for its inverse transform.

We prove the logarithmic bound by an explicit decomposition at the
endpoints. Divide the consecutive set $I$ into a fixed number of
consecutive blocks, each of angular length at most $\pi/8$. It suffices
to estimate one block. Blocks of length $O(w)$ were treated above. For
any other block $I'$, rotate its middle direction to angle zero.
The partition of unity shows that $\chi_{I'}$ is
one between its two endpoint regions and zero outside the block enlarged
by $O(w)$. Its derivative is supported in two disjoint intervals of
length $O(w)$, with endpoints $a,b\in[-\pi/8,\pi/8]$.
If $g_a$ and $g_b$ are the restrictions of
$\frac{d}{d\theta}\chi_{I'}(e_\theta)$ to these intervals, then
\[
 \int g_a=1,\qquad \int g_b=-1,
 \qquad
 \chi_{I'}(e_\theta)=\Psi_a(\theta)-\Psi_b(\theta),
\]
where
\[
 \Psi_a(\theta)=\int_{-\infty}^{\theta}g_a(v)\,dv,
 \qquad \Psi_b(\theta)=-\int_{-\infty}^{\theta}g_b(v)\,dv.
\]
Both steps are bounded, their derivatives of positive order are supported
within $O(w)$ of their endpoints, and
$|\Psi_a^{(k)}|+|\Psi_b^{(k)}|\le C_kw^{-k}$ for $k\ge1$.
Choose a fixed angular cutoff equal to one on $[-\pi/8,\pi/8]$ and
supported in $(-\pi/4,\pi/4)$. The support of the block lies in
$[-\pi/8,\pi/8]$ for sufficiently small $w$. Multiplication by this cutoff expresses
the block multiplier exactly as two localized endpoint multipliers.

For either endpoint, rotate it to angle zero. The angular cutoff and
the fixed annulus then give $c\le\xi_1\le C$. The step changes only
when $|\xi_2|\lesssim w$. Let $\Psi$ denote the endpoint function
$\Psi_a$ or $\Psi_b$ in the rotated coordinate. Differentiating
$\Psi(\arctan(\xi_2/\xi_1))$ shows, for every fixed $n\ge0$, that
the corresponding multiplier $\psi_{\rm end}$ satisfies
\[
 \|\partial_{\xi_1}^n\psi_{\rm end}\|_1\lesssim_n1,
 \qquad
 \|\partial_{\xi_1}^n\partial_{\xi_2}^k\psi_{\rm end}\|_1
 \lesssim_n w^{1-k}\quad(k=1,2).
\]
Each derivative in $\xi_1$ of the step supplies a factor $\xi_2/w$,
which is bounded on its transition region. Each derivative in $\xi_2$
costs $w^{-1}$ there,
and that region has area $O(w)$. Terms differentiated only on the fixed
cutoffs have bounded derivatives and support of bounded area.
Integration by parts in the two coordinates now gives
\[
 |\psi_{\rm end}^{\vee}(x_1,x_2)|
 \lesssim (1+|x_1|)^{-2}
 \min\{1,|x_2|^{-1},w^{-1}|x_2|^{-2}\}.
\]
The integral of the second factor is at most
\[
 2\left(1+\int_1^{1/w}\frac{dv}{v}
       +w^{-1}\int_{1/w}^{\infty}\frac{dv}{v^2}\right)
 \lesssim L_w.
\]
Summing the fixed number of pieces proves \eqref{eq:sector}.
For a product of two sector cutoffs, insert an additional smooth radial
cutoff equal to one on $\supp\phi$, factor the multiplier, and apply
the $L^1$ convolution inequality. This completes the proof.
\end{proof}

We shall use a wave-packet formulation of refined decoupling, based on the result of Guth, Iosevich, Ou and Wang
\cite[Corollary~4.3]{GIOW}. See also
Carbery, Li, Pang and Yung \cite[Theorems~6.2--6.3]{CLPY}
for related formulations. The estimate below is stated in
the normalization needed for our application, allowing a
small thickening of the Fourier support and using slightly
enlarged tubes to measure multiplicity. Such modifications
are natural in the wave-packet setting, and the auxiliary
parameters are chosen sufficiently small so that the
corresponding losses can be absorbed into the
$R^{\varepsilon_{\mathrm{dec}}}$ factor.

For a rectangular tube $T$, the notation $cT$ means
dilation about its centre by a factor $c$ in both
coordinates. In particular, the tubes used to count
multiplicity below are slightly larger than those on which
the corresponding wave packets are concentrated.

\begin{theorem}[Refined decoupling]\label{thm:RD}
For every $\varepsilon_{\rm dec}>0$, there is
$\delta_0(\varepsilon_{\rm dec})>0$ with the following property.
Let $R$ be sufficiently large, $r\in[R/2,2R]$, and
$0<\delta_{\rm freq}\le\delta_0(\varepsilon_{\rm dec})$.
Let $\mathcal W$ be a finite family of rectangular tubes in a fixed ball,
all of length one and of the same half-width
\[
 C_T R^{-1/2+\delta_{\rm freq}},
\]
where $C_T>0$ is fixed. In each fixed circle chart their directions
belong to a uniform angular lattice of mesh comparable to $r^{-1/2}$,
and tubes of each direction have
bounded overlap. For $T\in\mathcal W$, let $F_T$ be Schwartz with Fourier
support in a rectangular cap about the circle $\{\xi:|\xi|=r\}$, of tangential length
$O(R^{1/2})$ and normal thickness at most $r^{3\delta_{\rm freq}/4}$,
whose normal is parallel to $T$. Suppose that
\[
 \|F_T\|_{L^6(\R^2\setminus2.1T)}\le r^{-100}\|F_T\|_6,
 \qquad a\le\|F_T\|_6\le2a
\]
for some $a>0$ and every $T\in\mathcal W$.
Let $Y$ be a union of $r^{-1/2}$-squares in a fixed ball, and suppose
$1\le M\le|\mathcal W|$ and
\[
 |\{T\in\mathcal W:3T\cap60Q\ne\varnothing\}|\le M
 \quad\text{for every square }Q\subset Y.
\]
Then
\begin{equation}\label{eq:RD}
 \left\|\sum_{T\in\mathcal W}F_T\right\|_{L^6(Y)}^2
 \lesssim_{\varepsilon_{\rm dec},\delta_{\rm freq}}R^{\varepsilon_{\rm dec}}
 \left(\frac M{|\mathcal W|}\right)^{2/3}
 \sum_{T\in\mathcal W}\|F_T\|_6^2.
\end{equation}
The constant may also depend on $C_T$, the fixed balls, the circle charts
and the overlap bound.
\end{theorem}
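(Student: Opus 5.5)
The plan is to deduce \eqref{eq:RD} from the refined decoupling theorem of Guth, Iosevich, Ou and Wang \cite[Corollary~4.3]{GIOW}, in its wave-packet form for the unit circle at scale $R$. That result concerns functions $f=\sum_T f_T$ with Fourier support in the $R^{-1}$-neighbourhood of the unit circle. Each $f_T$ is microlocalized to an $R^{-1/2}$-cap and spatially concentrated on an $R^{1/2}\times R$ tube. The conclusion is
\[
\|f\|_{L^6(Y')}^2\lesssim R^{\varepsilon}\bigl(M/|\mathcal W|\bigr)^{2/3}\sum_T\|f_T\|_6^2
\]
whenever each $R^{1/2}$-cube of $Y'$ meets at most $M$ tubes and all $\|f_T\|_6$ are comparable.

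First I rescale. Set $f_T(y)=F_T(y/r)$. Then $f_T$ has Fourier support in a cap of the circle of radius one, of tangential length $O(r^{-1/2})$ and normal thickness $r^{-1+3\delta_{\rm freq}/4}$. Our tubes $T$ become tubes of length $r$ and width $\sim R^{1/2+\delta_{\rm freq}}$, and the $r^{-1/2}$-squares of $Y$ become $r^{1/2}$-squares. Both $L^6$ norms and the ratio $M/|\mathcal W|$ scale the same way on both sides, so \eqref{eq:RD} reduces to the rescaled statement at scale $r\sim R$.

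Second, I absorb the losses by working at the slightly coarser scale $R_1=r^{1-\delta'}$, with $\delta'$ a small multiple of $\delta_{\rm freq}$. At this scale the thickened annulus is contained in the $R_1^{-1}$-neighbourhood of the circle. Each original cap lies in $O(1)$ caps of width $R_1^{-1/2}$. The enlarged tubes of width $R^{1/2+\delta_{\rm freq}}$ are covered by $O(R^{C\delta_{\rm freq}})$ parallel $R_1$-tubes. The decay hypothesis $\|F_T\|_{L^6(\R^2\setminus2.1T)}\le r^{-100}\|F_T\|_6$ lets us treat each $F_T$ as a single wave packet adapted to $2.1T$, with negligible error. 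I then localize $Y$ to $R_1^{1/2}$-squares. The multiplicity hypothesis, stated for $3T\cap60Q$, is designed so that the enlarged tubes meeting any such square number at most $M$, up to a factor $R^{C\delta_{\rm freq}}$.

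Finally, I apply \cite[Corollary~4.3]{GIOW} at scale $R_1$ with loss $\varepsilon_{\rm dec}/2$. All factors $R^{C\delta_{\rm freq}}$ and $R^{C\delta'}$ are absorbed into $R^{\varepsilon_{\rm dec}/2}$ once $\delta_0(\varepsilon_{\rm dec})$ is small. The bounded overlap of parallel tubes and the uniform angular lattice ensure that no wave packet is counted more than $O(1)$ times.

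The main obstacle is the second step. A mismatch between the counts $M$ and $|\mathcal W|$ at the two scales would cost a factor not absorbable into $R^{\varepsilon_{\rm dec}}$. To avoid this, I would keep each $F_T$ as one wave packet at the coarser scale rather than splitting it. Its Fourier support still lies in one coarse cap, and its spatial support lies in one coarse tube after a fixed dilation. With this choice $|\mathcal W|$ is unchanged, and $M$ grows by at most a bounded factor.
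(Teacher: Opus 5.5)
Your overall strategy matches what the paper intends: rescale, reduce to \cite[Corollary~4.3]{GIOW}, and absorb $R^{C\delta_{\rm freq}}$ losses. The paper gives no argument beyond that citation and the remark that the thickened Fourier support and enlarged counting tubes cost only absorbable losses.

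The gap is in the step you identify as the main obstacle, which you resolve with a false claim. After rescaling, $F_T$ is concentrated on a tube of length $r$ and width $\sim R^{1/2+\delta_{\rm freq}}$, inside a ball of radius $\sim r$. A wave packet at scale $R_1=r^{1-\delta'}$ lives on an $R_1^{1/2+\delta_{\rm freq}}\times R_1$ tube, and the refined decoupling statement at that scale is formulated on a ball of radius $R_1$. Your tube is longer by $r^{\delta'}$ and wider by roughly $r^{\delta'/2}$. No fixed dilate of a coarse tube contains it, so $F_T$ cannot be kept as a single coarse packet. Your own remark that each tube is covered by $O(R^{C\delta_{\rm freq}})$ coarse tubes already contradicts ``$|\mathcal W|$ is unchanged''. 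Working at scale $r$ instead fails for the opposite reason: the normal thickness $r^{-1+3\delta_{\rm freq}/4}$ exceeds the $r^{-1}$ allowed there. One of these two mismatches has to be handled, and the proposal handles neither.

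The fix is to split after all; your concern about a count mismatch is unfounded. When $\|F_T\|_6\sim a$ for all $T$, the right side of \eqref{eq:RD} is comparable to $(M^2\sum_T\|F_T\|_6^6)^{1/3}$. So \eqref{eq:RD} is equivalent to $\|\sum_TF_T\|_{L^6(Y)}^6\lesssim R^{3\varepsilon_{\rm dec}}M^2\sum_T\|F_T\|_6^6$, in which $|\mathcal W|$ no longer appears. This form is stable under splitting, as follows.
\begin{enumerate}
\item Localize with a smooth partition of unity subordinate to $R_1$-balls, and split each localized $F_T$ across its width at scale $R_1^{1/2+\delta_{\rm freq}}$.
\item Take $\delta'\ge3\delta_{\rm freq}/4$. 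Up to Schwartz tails, the pieces are then scale-$R_1$ packets: the cutoffs blur frequencies by $O(R_1^{-1})$ normally and $O(R_1^{-1/2-\delta_{\rm freq}})$ tangentially, so each piece stays in one coarse cap.
\item Since $\psi^6\le\psi$ for cutoffs with values in $[0,1]$, the sixth powers of the piece norms sum to at most $\sum_T\|F_T\|_6^6$.
\item Each $R_1^{1/2}$-square meets $O(1)$ pieces of any one $T$, so the multiplicity grows only by a bounded factor.
\item Pigeonhole the piece norms into $O(\log R)$ classes, discarding pieces below $R^{-100}a$. Apply \cite[Corollary~4.3]{GIOW} on each doubled ball in the sixth-power form, and sum over balls and classes.
\end{enumerate}
This proves \eqref{eq:RD} with only logarithmic and $R^{C\delta'}$ losses.
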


The light-strip estimate below follows the good-part estimate in
\cite[Section~5]{GIOW}. We allow an arbitrary strip-mass bound $q$ and
smooth selected strips, and we verify that the resulting packets satisfy
Theorem~\ref{thm:RD}. The dependence on $q$ follows from the mass estimate
\eqref{eq:multiplicity-mass}.

For each coarse angular cell $j$, let $e_j$ be its centre direction.
We fix a nonnegative smooth transverse partition of unity
\[
 \zeta_{j,k}(x)=\zeta\bigl((x\cdot e_j^\perp-kw)/w\bigr),
 \qquad \sum_{k\in\mathbb Z}\zeta_{j,k}(x)=1,
\]
where $\zeta$ is supported in $[-2,2]$. Let $G_{j,k}$ be the strip
$|x\cdot e_j^\perp-kw|\le2w$.
Before selecting the strips, we fix a sufficiently large tube-width
constant $C_{\rm pkt}$, and then a sufficiently large enlargement
constant $C_*$. These constants depend only on the fixed cutoffs and
packet geometry. Their required properties are verified below.
The enlargement $C_*G_{j,k}$ is taken about the same central line.
For $w\le q\le1$, we choose any family of indices $(j,k)$ satisfying

\begin{equation}\label{eq:light-test}
 \omega(C_*G_{j,k})\le q,
\end{equation}

and set $\mathrm l_j=\sum\limits_{k:(j,k)\text{ chosen}}\zeta_{j,k}$.
Then $0\le \mathrm l_j\le1$ and $|\partial_x^\gamma \mathrm l_j|\le C_\gamma w^{-|\gamma|}$.

We decrease the function $\delta_0$ in Theorem~\ref{thm:RD}, if necessary,
so that the following estimate also holds.

\begin{lemma}\label{lem:light}
Let $1<t<2$, and let $\omega$ be a positive measure of mass at most one
in a fixed bounded ball, satisfying a $t$-Frostman upper bound.
Let $\mathrm l_j$ be selected using this same measure $\omega$ in
\eqref{eq:light-test}.
For every $\varepsilon_{\rm dec}>0$, when
$0<\delta_{\rm freq}\le\delta_0(\varepsilon_{\rm dec})$, $w=R^{-1/2+\delta_{\rm freq}}$, and $r\in[R/2,2R]$,

\begin{equation}\label{eq:light-operator}
 \left\|\sum_j \mathrm l_j(x)\int_{S^1}e^{2\pi irx\cdot e}f_j(e)\dd\sigma(e)\right\|_{L^2(\omega)}^2
 \lesssim_{\varepsilon_{\rm dec},\delta_{\rm freq}}R^{-t/3+\varepsilon_{\rm dec}}q^{2/3}
 \sum_j\|f_j\|_2^2
\end{equation}

for $f_j\in L^2(S^1)$ supported in $\supp\chi_j$.
The constant depends on the original Frostman constant. It is uniform
in the chosen patches and in subfamilies of angular cells.
\end{lemma}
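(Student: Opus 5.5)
We will prove Lemma~\ref{lem:light} by reducing it to the refined decoupling estimate of Theorem~\ref{thm:RD}, following the good-part argument of \cite[Section~5]{GIOW}. Write $G=\sum_j\mathrm l_jE_r f_j$, with $E_r f_j(x)=\int e^{2\pi irx\cdot e}f_j(e)\dd\sigma(e)$.

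\textbf{Wave-packet decomposition.} For each angular cell $j$, decompose $E_rf_j$, restricted to the fixed ball, into packets $\sum_{T}F_{j,T}$. The tubes $T$ point in the direction $e_j$, have length one and half-width $C_{\rm pkt}R^{-1/2+\delta_{\rm freq}}$, and lie on a grid adapted to the strips $G_{j,k}$. Each $F_{j,T}$ has Fourier support in an $R^{1/2}\times O(1)$ cap about the circle of radius $r$, and is $r^{-100}$-concentrated on $2.1T$. Next we multiply by $\mathrm l_j$. Since $w=R^{-1/2+\delta_{\rm freq}}$ and $|\partial^\gamma\mathrm l_j|\lesssim w^{-|\gamma|}$, this multiplication thickens the Fourier support by only $O(w^{-1})=O(R^{1/2-\delta_{\rm freq}})$. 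That is within the allowed tangential length, and the normal thickness $r^{3\delta_{\rm freq}/4}$ absorbs the cap enlargement. Because $\zeta_{j,k}$ is supported in $G_{j,k}$, the product $\mathrm l_jF_{j,T}$ is negligible unless $T$ meets a chosen strip. We therefore choose $C_{\rm pkt}$ so that $T$ sits inside the strip, and $C_*$ so that $3T$ enlarged by $60$ lies in $C_*G_{j,k}$. The packets $\mathrm l_jF_{j,T}$ still satisfy the tail condition of Theorem~\ref{thm:RD}. Orthogonality gives $\sum_T\|F_{j,T}\|_2^2\lesssim R^{-1/2}\|f_j\|_2^2$, up to the normalisation of $E_r$.

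\textbf{Pigeonholing.} Dyadically pigeonhole the packets by the size $a$ of $\|F_{j,T}\|_6$; there are $O(\log R)$ classes. Then cover $\supp\omega$ by $r^{-1/2}$-squares $Q$ and pigeonhole them by the multiplicity $M\sim|\{T:3T\cap60Q\neq\varnothing\}|$, again with $O(\log R)$ classes. Let $Y$ be the resulting union of squares. The $\log$ losses are absorbed into $R^{\varepsilon_{\rm dec}}$.

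\textbf{Passing from $L^2(\omega)$ to $L^6(Y)$.} By H\"older's inequality with respect to $\omega$, $\|G\|_{L^2(\omega|_Y)}^2\le\omega(Y)^{2/3}\|G\|_{L^6(\omega|_Y)}^2$. Locally constant (trace) estimates of the type \eqref{eq:trace}, applied at scale $r^{-1/2}$ on each square, convert $L^6(\omega)$ on $Y$ into $R^{(2-t)/6}$ times a weighted $L^6(Y)$ norm; more carefully, one weights each square by $\omega(Q)\lesssim r^{-t/2}$. Theorem~\ref{thm:RD} then gives the factor $(M/|\mathcal W|)^{2/3}\sum\|F_T\|_6^2$.

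\textbf{Mass estimate.} The key inequality, \eqref{eq:multiplicity-mass}, relates multiplicity to mass. Each of the $M$ tubes through a square $Q\subset Y$ lies in a strip $C_*G_{j,k}$ with $\omega(C_*G_{j,k})\le q$, by \eqref{eq:light-test}. Counting incidences gives
\[
 M\,\omega(Y)\lesssim\sum_{T\in\mathcal W}\omega(C_*T)\lesssim |\mathcal W|\,q.
\]
Hence $(M/|\mathcal W|)^{2/3}\omega(Y)^{2/3}\lesssim q^{2/3}$. Combining this with $\|F_T\|_6^2\approx R^{-2/3}\cdot(\text{area factor})\|F_T\|_2^2$ for packets on $R^{-1/2}$-tubes, and with the Frostman factor $r^{-t/2}$ per square, yields the power $R^{-t/3}$. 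We will track the exponents to confirm this, expecting it to come out as in GIOW with $q$ in place of their fixed threshold.

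\textbf{Main obstacle.} The hardest step is the exponent bookkeeping in the $L^2(\omega)\to L^6$ transfer. The measure is only $t$-Frostman, so one must use $\omega(Q)\lesssim R^{-t/2}$ on squares together with the H\"older step, and not lose the $q^{2/3}$ gain. The second delicate point is verifying the packet hypotheses after multiplication by $\mathrm l_j$, namely the Fourier thickening and the $2.1T$ concentration. This is what fixes the order in which $C_{\rm pkt}$ and then $C_*$ are chosen, and may require shrinking $\delta_0$. Uniformity over subfamilies of cells holds because every estimate is termwise in $j$.
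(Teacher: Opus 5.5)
Your outline has the same architecture as the paper's proof, which is the good-part argument of \cite{GIOW}: packets attached to selected strips, pigeonholing in packet size and multiplicity, the incidence bound $M\,\omega(Y)\lesssim|\mathcal W|q$ from \eqref{eq:light-test}, H\"older, and Theorem~\ref{thm:RD}. However, the packet construction as you state it does not meet the hypotheses of Theorem~\ref{thm:RD}. A coarse cell has angular length $\sim w=R^{-1/2+\delta_{\rm freq}}$. On the circle of radius $r$ its arc therefore has length $\sim R^{1/2+\delta_{\rm freq}}$ and sagitta $\sim R^{2\delta_{\rm freq}}$, which exceed both the tangential length $O(R^{1/2})$ and the normal thickness $r^{3\delta_{\rm freq}/4}$ allowed there. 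If you instead split the cell into $R^{1/2}$-caps, as you assert, their normals $e_\theta$ differ from $e_j$ by up to $w$, so tubes along $e_j$ are not parallel to the cap normals.

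The paper fixes this as follows.
\begin{enumerate}
\item It uses fine caps of angular length $\rho=r^{-1/2}$ and $\rho$-packets $\psi_{\theta,n}$ along the fine axes $\ell_{\theta,n}$.
\item It multiplies by $\vartheta\zeta_{j,k}$. A fine axis meets only $O(1)$ strips of its cell, since $|e_\theta-e_j|\lesssim w$.
\item It truncates the Fourier transform of $\zeta_{j,k}$ at $R^{\delta_{\rm freq}/4}/w$, so the normal spread is only $O(R^{\delta_{\rm freq}/4})$.
\item It attaches to each product a tube of half-width $C_{\rm pkt}w$ about the \emph{fine} axis. Here $C_{\rm pkt}$ is taken large so that the packet is concentrated in $2.1T$, not so that $T$ fits inside $G_{j,k}$.
\item The axes are $\rho$-spaced while the tubes are $w$-wide, so same-direction tubes overlap $\sim R^{\delta_{\rm freq}}$-fold. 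The paper splits them into residue classes modulo $\sim w/\rho$ to obtain the bounded overlap Theorem~\ref{thm:RD} requires. It then recombines by Cauchy--Schwarz at a cost $R^{C\delta_{\rm freq}}$, and this loss is what forces the decrease of $\delta_0(\varepsilon_{\rm dec})$.
\end{enumerate}

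Second, the exponent count is the actual content of \eqref{eq:light-operator}, and you leave it unverified; two of your inputs are also wrong.
\begin{itemize}
\item With the paper's normalisation, $\|E_rf\|_{L^2(B(0,1))}^2\lesssim r^{-1}\|f\|_2^2$, so the packet energy is $\lesssim R^{-1}$, not $R^{-1/2}$. Since a packet has height $\sim|c_{\theta,n}|\rho^{1/2}$ on a $\rho\times1$ tube, this gives $\sum_T\|F_T\|_6^2\lesssim R^{-2/3}$, which is \eqref{eq:packet-six-sum}.
\item The density factor must come from the $R^{-1}$-scale trace bound \eqref{eq:trace}, that is, $\|\omega*\Phi_R\|_\infty\lesssim R^{2-t}$. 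Weighting $r^{-1/2}$-squares by $\omega(Q)\lesssim r^{-t/2}$ is not legitimate, because $G$ carries frequencies of size $R$ and is not locally constant on such squares.
\end{itemize}
With the correct inputs one gets $R^{(2-t)/3}\cdot R^{\varepsilon}(M/|\mathcal W|)^{2/3}(|\mathcal W|q/M)^{2/3}\cdot R^{-2/3}=R^{-t/3+\varepsilon}q^{2/3}$.

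There is also a localisation issue in your order (H\"older in $\omega$ on $Y$, then trace). The trace step lands on an $R^{-1+}$-neighbourhood of $Y$, which meets squares where no multiplicity bound was imposed. The paper avoids this by smoothing first: $\|F\|_{L^2(\omega)}^2\lesssim\int|F|^2v_R$ with $v_R=\omega*\Phi_R$. It then partitions the ambient ball into the sets $Y_M$ and applies H\"older with respect to Lebesgue measure on each. Finally, it proves the incidence bound \eqref{eq:multiplicity-mass} for $\int_{Y_M}v_R$, using $Q+B(0,cw)\subset C_*G_{j,k}$ and $R^{-1}\ll w$.
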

\begin{proof}
We construct wave packets adapted to the selected strips and verify
the hypotheses of Theorem~\ref{thm:RD}. The strip-mass bound then
controls the smoothed measure on regions where many packets meet.
H\"older's inequality and refined decoupling cancel the multiplicity
factors and give \eqref{eq:light-operator}.

By homogeneity, we may normalize $\sum\limits_j\|f_j\|_2^2=1$.

After a fixed translation and dilation, we may assume that
\(\supp\omega\subset B(0,1/10)\); this changes only the implicit
constants.

We set \(\rho=r^{-1/2}\sim R^{-1/2}\). In the fixed circle charts
of Theorem~\ref{thm:RD}, we use a smooth partition into caps of angular
length \(O(\rho)\). Their centres belong to the lattices specified in
that theorem. Only a bounded number of the angular cells of length
\(O(w)\) meets any one of these finer caps.

On a fine cap centred at $e_\theta$, we use the parameter
$\theta+\rho u$, with $u$ in a bounded interval. We expand the input
in an orthonormal Fourier series on a slightly larger interval and
multiply by a fixed smooth function equal to one on the input support.
Let $c_{\theta,n}$ be the Fourier coefficients. We include
$c_{\theta,n}$ in the definition of the packet $\psi_{\theta,n}$, so the
contribution of the cap is $\sum\limits_n\psi_{\theta,n}$. These packets satisfy
\begin{align}
 \sum_{\theta,n}|c_{\theta,n}|^2&\less1,\label{eq:coeff}\\
 |\psi_{\theta,n}(x)|
 &\less_J |c_{\theta,n}|\rho^{1/2}
 \bigl(1+\dist(x,\ell_{\theta,n})/\rho\bigr)^{-J}
 \quad\text{for }x\in B(0,1),\label{eq:packet-point}\\
 \|\vartheta\psi_{\theta,n}\|_6^2
 &\less \rho^{4/3}|c_{\theta,n}|^2
 \less R^{-2/3}|c_{\theta,n}|^2.\label{eq:packet-six}
\end{align}
Here the parallel axes $\ell_{\theta,n}$ have spacing comparable to $\rho$
and direction $e_\theta$. The function $\vartheta$ is a fixed smooth cutoff
equal to one on $B(0,1/5)$ and supported in $B(0,1/2)$. To prove
\eqref{eq:packet-point}, we note that the phase after rescaling the cap has
the form
$n u+r\rho(x\cdot e_\theta^\perp)u+O(|x|u^2)$.
Its derivatives of order at least two are uniformly bounded. Integration
by parts gives the displayed transverse decay. Integrating its sixth power
over the support of $\vartheta$ gives \eqref{eq:packet-six}. Bounded chart
Jacobians and overlap affect only constants. These assertions hold for the
contribution of each coarse angular cell separately; the bounded overlap
gives \eqref{eq:coeff} for their combined collection.

We multiply each packet from coarse angular cell \(j\) by each \(\zeta_{j,k}\) for
which \((j,k)\) is chosen, and by \(\vartheta\). Since
\(|e_j-e_\theta|\lesssim w\), there are only \(O(1)\) transverse cutoffs
whose central lines lie within \(O(w)\) of \(\ell_{\theta,n}\) on
\(B(0,1/2)\). We retain these terms. For the other strip cutoffs,
\eqref{eq:packet-point} gives an error \(O(R^{-J})\) for any prescribed
\(J\), since \(w/\rho\sim R^{\delta_{\rm freq}}\). The same integration by parts allows us
to restrict to \(|n|\le R^2\), with error \(O(R^{-J})\) in supremum norm
for every prescribed \(J\).
In each error estimate below, we first fix the required power $J$
and then choose the integration-by-parts order sufficiently large.
This order may depend on $J/\delta_{\rm freq}$, but not on $R$.
There are at most \(CR^{5/2}\) remaining packets before
accounting for the bounded number of transverse cutoffs.

We also discard axes missing $B(0,3/4)$. On the support of
$\vartheta$, such an axis has a fixed positive separation, so
\eqref{eq:packet-point}, Cauchy--Schwarz in the coefficients, and summation
of the transverse tails give $O(R^{-J})$ for every prescribed $J$.
All remaining axes therefore meet the fixed ambient ball; their unit
segments and required dilations can be placed in $B(0,3)$.
Hence, the geometric containment hypothesis in Theorem~\ref{thm:RD} holds.

We associate each remaining product
$\vartheta\zeta_{j,k}\psi_{\theta,n}$ with a tube \(T\) about
\(\ell_{\theta,n}\) of length one and half-width \(C_{\rm pkt}w\),
centred at the point of the axis nearest the origin. The fine and coarse axes differ
by \(O(w)\) in the fixed ambient ball.
With these fixed choices, we have
\[
 3T\cap60Q\ne\varnothing
 \quad\Longrightarrow\quad
 Q+B(0,cw)\subset C_*G_{j,k},
\]
where \(c\) is the fixed enlargement needed for smoothing at scale
\(R^{-1}\). These choices are independent of the measure and the scale.
Every tube comes from a chosen index \((j,k)\), so the strip on the
right satisfies \eqref{eq:light-test}.

For a fixed fine direction, the axes initially have spacing $\rho$. We
split their indices modulo a sufficiently large fixed multiple of
$w/\rho\sim R^{\delta_{\rm freq}}$. The tubes of half-width $C_{\rm pkt}w$ then have bounded overlap within
each residue class. A further bounded colouring separates the $O(1)$
coarse angular cells and strip cutoffs. All resulting families meet the directional
and overlap conditions in \eqref{eq:RD}.

We next verify the Fourier support and decay required in
Theorem~\ref{thm:RD}. Before inserting
the spatial cutoffs, each packet is supported on its fine circle cap in
Fourier space. The transverse cutoff $\zeta_{j,k}$ has Fourier frequencies
along $e_j^\perp$. We truncate these frequencies at $R^{\delta_{\rm freq}/4}/w$.
Their component normal to the fine cap is
$O(w)R^{\delta_{\rm freq}/4}/w=O(R^{\delta_{\rm freq}/4})$, and their tangential component is
$O(R^{1/2-3\delta_{\rm freq}/4})$. We likewise truncate the Fourier transform of
$\vartheta$ at $R^{\delta_{\rm freq}/4}$. The discarded parts, and their needed
derivatives, are $O(R^{-J})|c_{\theta,n}|$ for every prescribed $J$, since
the original cutoffs are smooth.

If \(L_\zeta=R^{\delta_{\rm freq}/4}/w\), the curvature of the circle contributes the
additional normal error
\[
 O(L_\zeta r^{-1/2}+L_\zeta^2/r)
 =O(R^{-3\delta_{\rm freq}/4}+R^{-3\delta_{\rm freq}/2}),
\]
which is smaller than the allowed normal thickness.

For each tube \(T\), let \(P_T\) be a smooth Fourier multiplier equal to
one on the truncated support. Its support is a rectangle of tangential
length $O(R^{1/2})$ and normal thickness at most $r^{3\delta_{\rm freq}/4}/10$.
After rescaling the two coordinates, its kernel has uniformly bounded
$L^1$ norm. We define
\[
 F_T=P_T\bigl(\vartheta\zeta_{j,k}\psi_{\theta,n}\bigr).
\]
The function inside the multiplier is smooth and compactly supported,
so $F_T$ is Schwartz. By definition, it has the exact Fourier support
required in \eqref{eq:RD}, and
\begin{equation}\label{eq:packet-six-sum}
 \sum_T\|F_T\|_6^2\less R^{-2/3}.
\end{equation}
After summing all $O(R^{5/2})$ projected packets, the projection and
truncation errors on \(\supp\omega\) are still $O(R^{-J})$ in supremum
norm, upon increasing the integration-by-parts orders. The projection kernel has
rapid decay outside transverse distance $O(\rho)$ and longitudinal distance
$O(R^{-3\delta_{\rm freq}/4})$. Together with \eqref{eq:packet-point}, this gives an
absolute $L^6$ tail $O(R^{-J})|c_{\theta,n}|$ outside $2.1T$. There is a
fixed transverse margin: the strip cutoff is supported in a smaller
transverse dilation of \(T\), while \eqref{eq:packet-point} controls the
spatial tails of the original wave packet. In the longitudinal direction,
$\supp\vartheta\subset B(0,1/2)$ lies a fixed positive distance from
the ends of $2.1T$, by our choice of the centre of $T$.
Thus, after summing over the charts,

\begin{equation}\label{eq:packet-decomposition-error}
 \vartheta\sum_j \mathrm l_jE_rf_j=\sum_TF_T+\mathcal E_J,
 \qquad
 \|\mathcal E_J\|_{L^\infty(\supp\omega)}\less_J R^{-J},
\end{equation}

provided the integration-by-parts orders are sufficiently large.

We discard the packets with \(\|F_T\|_6<R^{-100}\). Their Fourier
supports lie in \(B(0,CR)\), so Bernstein's inequality gives
\(\|F_T\|_\infty\lesssim R^{1/3}\|F_T\|_6\). We choose the projection and
tail errors above to be \(O(R^{-400})|c_{\theta,n}|\).

Since \(\sum\limits_{\theta,n}|c_{\theta,n}|^2\lesssim1\), we have
\(|c_{\theta,n}|\lesssim1\). For the retained packets,
\[
 \frac{\|F_T\|_{L^6(\R^2\setminus2.1T)}}{\|F_T\|_6}
 \lesssim R^{-300}\le r^{-100}
\]
for sufficiently large $R$. This is the relative decay required in
Theorem~\ref{thm:RD}.

The packet count
and Bernstein's inequality show that the discarded contribution is
\(O(R^{-50})\) on \(\supp\omega\); the implicit constant may depend on
\(\delta_{\rm freq}\). By \eqref{eq:packet-six-sum},
\(\|F_T\|_6\lesssim1\). We divide the remaining packets into
\(O(\log R)\) classes in which the \(L^6\) norms lie in one
interval \([2^k,2^{k+1})\). Thus, they differ by at most a factor of two. Each class now
satisfies the Fourier support and decay hypotheses of
Theorem~\ref{thm:RD}.

We now estimate the mass of the smoothed measure where tubes overlap.
We fix one of these classes, in one chart and one residue class, and
denote its family of tubes by $\mathcal W$. Let $F=\sum\limits_{T\in\mathcal W}F_{T}$.
The reproducing
inequality used in \eqref{eq:trace} gives
\[
 \|F\|_{L^2(\omega)}^2\less\int|F(x)|^2v_R(x)\dd x,
 \qquad v_R=\omega*\Phi_R,
 \quad \|v_R\|_\infty\less R^{2-t},\quad\|v_R\|_1\less1.
\]
Outside the fixed ambient ball, the $L^{3/2}$ norm of $v_R$ is
$O(R^{-J})$ for every fixed $J$, while $\|F\|_6$ has a polynomial bound
by \eqref{eq:packet-six-sum} and the packet count. H\"older's inequality
therefore bounds this contribution by $O(R^{-50})$. We tile that ball by
$\rho$-squares and write
\[
 n(Q)=|\{T:3T\cap60Q\ne\varnothing\}|.
\]
We group the squares according to the dyadic ranges
$M\le n(Q)<2M$ and denote their union by $Y_M$.
Theorem~\ref{thm:RD} applies to $Y_M$ with multiplicity parameter
$\min\{2M,|\mathcal W|\}$, which changes its bound by at most a fixed factor.
The containment proved above, together with the strip bound, gives

\begin{equation}\label{eq:multiplicity-mass}
 \int_{Y_M}v_R\less\frac{|\mathcal W|q}{M}
 +O\!\left(\frac{|\mathcal W|}{M}R^{-J}\right).
\end{equation}

For every square in $Y_M$, the count $n(Q)$ is at least $M$. Hence,
\[
 M\int_{Y_{M}}v_{R}
 \le\sum_{T}\sum_{\substack{Q\subset Y_M\\3T\cap60Q\ne\varnothing}}
        \int_{Q}v_{R}.
\]
For each tube, the union of these squares and its fixed
$w$-neighbourhood lie in $C_*G_{j,k}$. Smoothing at scale
$R^{-1}\ll w$ bounds the corresponding integral by $Cq+O(R^{-J})$,
using \eqref{eq:light-test}. Thus, the last sum is at most
$C|\mathcal W|q+O(|\mathcal W|R^{-J})$, which proves \eqref{eq:multiplicity-mass}.
Since $|\mathcal W|\lesssim R^{5/2}$ and $M\ge1$, we can make its error
$O(R^{-200})$ by increasing $J$.

H\"older's inequality, \eqref{eq:RD}, and \eqref{eq:multiplicity-mass} now give

\begin{align*}
 \int_{Y_M}|F|^2v_R
 &\le \|F\|_{L^6(Y_M)}^2
        \|v_R\|_\infty^{1/3}\left(\int_{Y_M}v_R\right)^{2/3}\\
 &\less R^{\varepsilon_{\rm aux}}R^{(2-t)/3}
     \left(\frac M{|\mathcal W|}\right)^{2/3}
     \left(\frac{|\mathcal W|q}{M}\right)^{2/3}
     \sum_T\|F_T\|_6^2+O(R^{-50})\\
 &\less R^{\varepsilon_{\rm aux}+(2-t)/3}q^{2/3}
     \sum_T\|F_T\|_6^2+O(R^{-50}).
\end{align*}

Squares with \(n(Q)=0\) lie outside every $2.1T$, so the decay estimate controls their contribution. The sum over the $O(\log R)$ values of $M$ costs a logarithmic factor. The charts, residue classes, and packet-norm ranges partition the packets into at most $R^{C\delta_{\rm freq}}(\log R)^C$ families. We apply Cauchy--Schwarz to their sum and use \eqref{eq:packet-six-sum} for the total packet energy. Together with \eqref{eq:packet-decomposition-error}, this gives
\[
 \left\|\sum_{j} \mathrm l_{j}E_{r}f_{j}\right\|_{L^2(\omega)}^2
 \less R^{-t/3+\varepsilon_{\rm aux}+C\delta_{\rm freq}}(\log R)^Cq^{2/3}+O(R^{-50}).
\]
We first choose \(\varepsilon_{\rm aux}<\varepsilon_{\rm dec}/4\), and then decrease $\delta_0(\varepsilon_{\rm dec})$ so that the packet estimate \eqref{eq:RD} holds with loss $\varepsilon_{\rm aux}$ and \(C\delta_0(\varepsilon_{\rm dec})<\varepsilon_{\rm dec}/4\). The argument holds
for every \(0<\delta_{\rm freq}\le\delta_0(\varepsilon_{\rm dec})\). The remaining power of $R^{\varepsilon_{\rm dec}}$
absorbs the fixed logarithmic factors. Since $q\ge w$ and $1<t<2$,
the error $O(R^{-50})$ is absorbed as well. This proves
\eqref{eq:light-operator}. The proof is unchanged for a subfamily of
coarse angular cells. This completes the proof.
\end{proof}

\section{Angular decomposition}\label{sec:angular-decomposition}

We shall decompose each Frostman measure into three angular parts.
The light and remaining heavy parts satisfy circular estimates.
The clustered parts will be estimated together after averaging over
rotations.

\subsection{Outline of the proof}\label{sec:proof-outline}

We first explain how these estimates prove
Theorem~\ref{thm:breaking-four-thirds}. Let \(1<t<2\) be a common
Frostman exponent for the two measures. In this outline we write \(w=R^{-1/2}\) and
omit logarithms and arbitrarily small power losses. In the proof we
use \(w=R^{-1/2+\delta_{\rm freq}}\), write out those losses, and
choose all parameters before summing in \(R\).

Let \(\omega\) be either of the two Frostman measures. For each
angular cell of length \(w\), divide the support into transverse
strips of width \(w\). Fix a mass threshold \(h=w^{1-b_0}\), with
\(0<b_0<1\). A strip is heavy if its remaining mass exceeds \(h\).
The difficulty is that the heavy directions through a point may
concentrate in a short interval. A two-ends estimate cannot be
applied to such a concentration.

We remove these concentrations by a finite procedure. At each point,
the removed directions lie in at most \(K\) intervals of length
\(O(w^{\alpha_c})\), where
\(K\lesssim w^{-\beta_c}(\log(e/w))^2\) and
\(0<2\beta_c<\alpha_c\). We denote their symbol by \(c\), the
remaining light symbol by \(\mathrm l\), and the remaining heavy symbol
by \(d\). Thus, \(c+\mathrm l+d=1\). Removing directions changes the strip
masses. We therefore repeat the angular and mass tests until both
are satisfied, and we do this simultaneously at every dyadic mass
threshold above \(h\). Figure~\ref{fig:angular-parts} illustrates
the three resulting parts at one point.

\begin{figure}[htbp]
\centering
\begin{tikzpicture}[scale=1.35,>=Latex]
 \fill[gray!20] (0,0) -- (12:1.45) arc[start angle=12,end angle=72,radius=1.45] -- cycle;
 \fill[gray!20] (0,0) -- (172:1.45) arc[start angle=172,end angle=210,radius=1.45] -- cycle;
 \draw[gray!65] (0,0) circle (1.45);
 \foreach \a in {0,8,...,352}{\draw[gray!45] (\a:1.39)--(\a:1.50);}
 \foreach \a in {102,126,246,286,326}{
   \draw[line width=2pt] (\a:1.45) arc[start angle=\a,end angle=\a+5,radius=1.45];
 }
 \draw[gray!70] (0,0)--(12:1.45) (0,0)--(72:1.45);
 \draw[gray!70] (0,0)--(172:1.45) (0,0)--(210:1.45);
 \fill (0,0) circle (1.2pt);
 \node[below left] at (0,0) {$x$};
 \node at (42:1.83) {$c$};
 \node at (104:1.79) {$d$};
 \node at (264:1.80) {$\mathrm l$};
 \draw[->] (2.05,.55)--(1.04,.71);
 \node[anchor=west,align=left,font=\small] at (2.10,.55)
   {removed angular intervals};
 \draw[->] (2.05,-.20)--(1.22,-.76);
 \node[anchor=west,align=left,font=\small] at (2.10,-.20)
   {remaining heavy cells};
 \node[anchor=west,align=left,font=\small] at (2.10,-.90)
   {all other cells are light};
\end{tikzpicture}
\caption{The angular partition at a fixed point. The shaded sectors
represent \(c\); the thick arcs represent \(d\). The picture shows the underlying cell classification. The symbols
use overlapping smooth cutoffs, and the remaining heavy part also
contains fixed neighbors of heavy bins. The cells have length comparable to
\(w\), and the removed intervals have length \(O(w^{\alpha_c})\).}
\label{fig:angular-parts}
\end{figure}

The light part satisfies the circular estimate
\[
 S_\omega^{\mathrm l}(r)\lesssim R^{-t/3}h^{2/3},\qquad r\asymp R,
\]
up to the losses just described. This follows from refined decoupling,
with the strip mass \(h\) left as a parameter.
For the remaining heavy strips, the dual lines satisfy a two-ends
condition. The Furstenberg estimate gives a tail bound
\[
 w\sum_{j,k:a_{jk}>q}a_{jk}\lesssim(q/w)^{-p_t},
 \qquad p_t=\frac2{2-t},\qquad q\ge h.
\]
The circular energy of the strips in one dyadic mass range is bounded
by \(w\sum a_{jk}^2\). Summing the square roots of these bounds over
the mass ranges is a geometric sum, since \(p_t>1\). Hence,
\[
 S_\omega^d(r)\lesssim h(h/w)^{-p_t}
 =w^{1+(p_t-1)b_0}.
\]

The clustered parts are treated together. For fixed source points,
two sectors of lengths \(\lambda_1,\lambda_2\) overlap after a
rotation only for an angular set of measure
\(O(\lambda_1+\lambda_2+w)\). The inverse Fourier kernel of each
sector pair has \(L^1\) norm \(O((\log(e/w))^2)\). The total length of the selected intervals is
\(O(Kw^{\alpha_c})\). Summing over at most \(K^2\) pairs therefore
gives a bound tending to zero because
\(\alpha_c>2\beta_c\). This averaging in \(L^1\) is the reason
we can retain the clustered parts in the measure.

The four other products have one light or remaining heavy factor.
Plancherel's theorem and rotational averaging pair its circular
estimate with an annular bound \(R^{2-t}\) for the other factor.
Thus, before losses, the two squared \(L^2\) bounds are
\[
 R^{2-t}R^{-t/3}h^{2/3}
   =w^{(8t-10-2b_0)/3},\qquad
 R^{2-t}w^{1+(p_t-1)b_0}
   =w^{2t-3+(p_t-1)b_0}.
\]
They decay exactly when
\[
 \frac{(3-2t)(2-t)}{t}<b_0<4t-5.
\]
This interval is nonempty if and only if \(t^2+t-3>0\), or
\(t>\sbase=(\sqrt{13}-1)/2\). Thus, the light and remaining heavy
estimates together give the threshold in Theorem~\ref{thm:breaking-four-thirds}.
To reach \(5/4\), one would need to improve at least one of these estimates.
The positive margins absorb the small power losses in the estimates below.
The resulting \(L^1\) bounds for the frequency pieces are summable,
which proves absolute continuity.

\subsection{Construction of the decomposition}

Throughout this section, let $1<t<2$. On a bounded support, a measure
with a larger Frostman exponent also satisfies the bound with exponent $t$.
We apply the following constructions separately to $\mu$ and $\nu$,
writing $\omega$ for either measure. We may translate the two supports
and apply a common dilation so that both lie in a fixed small ball.
These operations translate and dilate each rotated difference, so they
preserve absolute continuity.
All restrictions of $\omega$ retain its original Frostman upper bound;
we do not normalize their masses.

For dyadic $R\ge2$, let
\begin{equation}\label{imp:scale-parameters}
 \begin{gathered}
 w=R^{-1/2+\delta_{\rm freq}},\qquad p_t=\frac2{2-t},\qquad
 H=w^{-b_0},\qquad h=wH,\\
 A_{\rm l}(q)=R^{-t/3}q^{2/3},\qquad A_{\rm d}(q)=q(q/w)^{-p_t},
 \end{gathered}
\end{equation}
where $0<b_0<1$ and $0<\delta_{\rm freq}<1/4$. All auxiliary
parameters are fixed independently of $R$. We retain their losses in
the estimates below and choose them simultaneously in
Section~\ref{sec:proof-subcritical}.

Let $J_j$ be the disjoint angular cells of length comparable to $w$
associated with the smooth partition $\chi_j$ of
Section~\ref{sec:analytic-estimates}, and let
$e_j$ be their central directions. For an interval $I$ made up of cells,
$j\in I$ means $J_j\subset I$. Define the disjoint transverse bins
\begin{equation}\label{imp:ownership-bins}
 Q_{jk}=\{x:kw\le x\cdot e_j^\perp<(k+1)w\}.
\end{equation}
We use a fixed finite collection of bounded slope charts, with endpoints
on the angular grid. In each chart the change from angle to slope is
bi-Lipschitz. Tests in a chart always use the part of the angular set
in that chart. Thus, mass in another chart does not affect the test.
The smooth angular cutoffs may extend beyond the chart endpoints.
Their overlap remains bounded.

Let $0<2\beta_c<\alpha_c<1$. Fix a constant $C_{\rm 2e}>1$,
independent of $R$, for the angular test below.
We use all thresholds $h_n=2^nh<1$,
where $n$ is a nonnegative integer. The higher thresholds are needed
to estimate the whole distribution of residual bin masses: a
two-ends condition on one set of directions need not hold on a subset.
Let $k_j(x)$ be the index of the bin containing $x$.
For each chart, fix an ordered list of intervals of consecutive cells
with between $\lceil C_{\rm test}w^{\alpha_c}/w\rceil$ and
$2\lceil C_{\rm test}w^{\alpha_c}/w\rceil$ cells, clipped at its endpoints.
Choose the fixed constant $C_{\rm test}$ so that every interval of length
$w^{\alpha_c}$ in the slope coordinate lies in a test interval.
Order all pairs $(n,I)$ once and for all. The construction is as follows.
\begin{enumerate}
\item Set
\[
 c_j(x)=0,\qquad
 \mathcal P_{n,j}=\{k:\omega(Q_{jk})>h_n\}.
\]
At each $x$, start with an empty list of recorded angular intervals.
For the current coefficients and bin lists, set
\[
 D_{n,x}=\bigcup\{J_j:c_j(x)=0,\ k_j(x)\in\mathcal P_{n,j}\}.
\]
\item Keep the bin lists fixed. At each $x$, if a pair $(n,I)$ violates
\begin{equation}\label{imp:terminal-test}
 |D_{n,x}\cap I|\le C_{\rm 2e}^{-1}w^{\beta_c}|D_{n,x}|,
\end{equation}
where both occurrences of $D_{n,x}$ refer to its restriction to the
chart containing $I$, choose the first such pair, record $I$ with its
threshold $n$, and set $c_j(x)=1$ on every cell of $I$.
Recompute the sets $D_{n,x}$ and repeat until all angular tests hold.
\item Compute the residual bin masses
\[
 a_{jk}=\int_{Q_{jk}}(1-c_j(x))\dd\omega(x)
\]
and remove $k$ from $\mathcal P_{n,j}$ whenever $a_{jk}\le h_n$.
If a list changes, return to step~2; otherwise stop.
\end{enumerate}
Throughout the construction, the coefficients $c_j$ only increase,
the masses $a_{jk}$ and the lists $\mathcal P_{n,j}$ only decrease,
and $\mathcal P_{n+1,j}\subset\mathcal P_{n,j}$.
The selected cells at $x$ are exactly the union of its recorded
intervals. Each angular operation removes at least one cell from the
set that failed its test. The finite ordered choices therefore give
measurable coefficients.

\begin{lemma}
\label{imp:terminal}
The construction terminates after finitely many rounds, all final
coefficients are measurable, and for every $n,j,k$,
\[
 k\in\mathcal P_{n,j}\quad\Longleftrightarrow\quad a_{jk}>h_n.
\]
In each slope chart, every final set $D_{n,x}$ satisfies the
 two-ends condition at scale $w^{\alpha_c}$ with exponent $\beta_c$.
 Its fixed $O(w)$ thickening, and the associated $Cw$-ball shading
 in the dual plane, satisfy the corresponding condition with exponent
 $\beta_c/2$ for sufficiently small $w$. Here $C$ is any fixed
 enlargement constant.
At each $x$, the final selected set $\{j:c_j(x)=1\}$ is the union of at most
\[
 K\lesssim w^{-\beta_c}L_w^2
\]
recorded intervals of length $O(w^{\alpha_c})$.
\end{lemma}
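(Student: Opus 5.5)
Termination and the bin-list characterization come from a monotonicity argument over a finite state space; the two-ends conditions and the count $K$ come from reading the terminal state of the procedure.

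\textbf{Termination.} At any fixed $x$, the state of the construction is the finite set of pairs $(j,n)$ with $c_j(x)=1$, together with the global bin lists $\mathcal P_{n,j}$. There are $O(w^{-1})$ cells, $O(L_w)$ thresholds $h_n<1$, and finitely many bins meeting the support.

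Each angular operation in step~2 sets at least one new $c_j(x)=1$. Each return from step~3 to step~2 removes at least one bin from some $\mathcal P_{n,j}$. Both quantities are monotone and bounded, so the procedure stops after finitely many rounds.

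\textbf{Measurability.} Each coefficient is obtained by a finite sequence of first-violating choices among finitely many ordered pairs. Each choice depends on $x$ only through the cell indices $k_j(x)$, and these are Borel. Hence every final $c_j$ is a finite combination of indicator functions of Borel sets.

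\textbf{Bin-list characterization.} At termination step~3 changes nothing, so every $k\in\mathcal P_{n,j}$ satisfies $a_{jk}>h_n$. For the converse I argue by induction that no bin with $a_{jk}>h_n$ is ever removed. The masses $a_{jk}$ only decrease, and a bin is removed only when its current mass is $\le h_n$. Its final mass is therefore also $\le h_n$. Initially $\mathcal P_{n,j}$ contains every bin of mass $>h_n$, and this gives the equivalence.

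\textbf{Two-ends condition for $D_{n,x}$.} At termination every test \eqref{imp:terminal-test} holds for every test interval $I$ in every chart. Any slope interval of length $w^{\alpha_c}$ lies in some test interval, by the choice of $C_{\rm test}$. This gives the two-ends condition at scale $w^{\alpha_c}$ with exponent $\beta_c$ and constant $C_{\rm 2e}^{-1}$.

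For the $O(w)$ thickening, each cell grows by a bounded factor and $|D_{n,x}|\gtrsim w$. A $w^{\alpha_c}$-ball of the thickening is therefore covered by the thickening of a bounded number of test intervals. The ratio changes by at most a constant, and the loss $C w^{\beta_c}\le w^{\beta_c/2}$ is absorbed for small $w$.

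The passage to $Cw$-ball shadings in the dual plane uses the bi-Lipschitz slope charts. A ball of radius $w^{\alpha_c}$ around a point of the dual line corresponds to a slope interval of comparable length. The shading area is comparable to $w$ times the length of the thickened direction set. The exponent $\beta_c/2$ gives room to absorb these constants.

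\textbf{Bound on $K$, the main obstacle.} I expect this step to be the hardest. When a pair $(n,I)$ is recorded, the failed test gives
\[
 |D_{n,x}\cap I|>C_{\rm 2e}^{-1}w^{\beta_c}|D_{n,x}|,
\]
and all of $D_{n,x}\cap I$ is then removed from $D_{n,x}$. Between recordings at the same threshold $n$, $D_{n,x}$ never grows: the coefficients increase and the lists shrink. So each recording at threshold $n$ multiplies $|D_{n,x}|$ by at most $1-C_{\rm 2e}^{-1}w^{\beta_c}$.

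Within one chart, $|D_{n,x}|$ starts at $O(1)$. It is either empty or at least $cw$, since it is a union of cells. The number of recordings at threshold $n$ is therefore at most
\[
 C w^{-\beta_c}\log(1/w)\lesssim w^{-\beta_c}L_w .
\]
Summing over the $O(L_w)$ thresholds and the fixed number of charts gives $K\lesssim w^{-\beta_c}L_w^2$.

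Each recorded interval has length at most $2\lceil C_{\rm test}w^{\alpha_c}/w\rceil$ cells, which is $O(w^{\alpha_c})$. The selected set $\{j:c_j(x)=1\}$ is exactly the union of the recorded intervals, as noted before the lemma.

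The one subtle point is that removals triggered at a different threshold $n'$ or by step~3 only decrease $D_{n,x}$. They never undo the progress at threshold $n$, so the geometric decay count is valid.
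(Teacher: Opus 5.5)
Your proposal is correct and follows essentially the same route as the paper. Termination comes from the finitely many list entries, and the bin-list equivalence from monotonicity of the residual masses. The two-ends conditions are read off the terminal angular tests, with fixed constant losses absorbed by passing to $\beta_c/2$. The count $K\lesssim w^{-\beta_c}L_w^2$ comes from the geometric shrinking of each $D_{n,x}$, per threshold and chart, down to its minimal nonempty length $\gtrsim w$.

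The only slips are cosmetic. $k_j(x)$ is a bin index rather than a cell index, and $c_j(x)$ carries no threshold label.
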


\begin{proof}
We first verify termination and identify the final bin lists.
We then count the recorded intervals and check the two-ends condition.
There are finitely many entries $(n,j,k)$ in the lists. Each round
either ends the construction or removes an entry. Thus, the construction
terminates. The operation at each point is finite and
measurable as explained above, and every residual mass is an integral
of a measurable function.

The masses $a_{jk}$ decrease throughout. If a bin is absent from a list
at any stage, its mass at that stage is at most the corresponding
threshold. Its final mass is no larger. Conversely, the final test
removes every bin whose mass is at most its threshold. This proves the
stated equivalence.

For a fixed $x$, chart, and threshold $n$, the set $D_{n,x}$ only
shrinks during the entire construction. Each angular interval
recorded because of that threshold deletes more than a
$C_{\rm 2e}^{-1}w^{\beta_c}$ fraction of its current length.
Other angular deletions and bin updates can only reduce this length.
The initial length is bounded, and any nonempty set of cells has
length at least a fixed multiple of $w$. Thus, the number of such
records is at most
\[
 1+\frac{\log(C/w)}{-\log(1-C_{\rm 2e}^{-1}w^{\beta_c})}
 \lesssim w^{-\beta_c}L_w.
\]
There are $O(L_w)$ thresholds and a fixed number of charts,
giving the asserted bound on $K$.

At the final round every set defined by the lists passes the angular
test, and the lists do not change thereafter. Thus, all the final sets
satisfy the two-ends condition.
For a fixed $O(w)$ thickening, the cells contributing to its
intersection with an interval of length $O(w^{\alpha_c})$ lie in an
$O(w)$ enlargement of that interval. Within the chart, this enlargement
is covered by a bounded number of the test intervals. Each original
cell contributes only a bounded number of neighboring cells. Thus,
the final tests bound the thickened intersection by $Cw^{\beta_c}$
times the original total length, and hence by the same factor times
the thickened total length. The passage to areas of ball shadings
changes only fixed constants. For sufficiently small $w$, these
constants are absorbed by replacing $\beta_c$ with $\beta_c/2$.
This completes the proof.
\end{proof}

For the final coefficients, set $\omega_j=(1-c_j)\omega$ and
\[
 M(q)=w\sum_{j,k:a_{jk}>q}a_{jk},\qquad h\le q\le1.
\]
At a dyadic threshold $q=h_n$, the final list consists precisely of the
bins with $a_{jk}>h_n$. Hence,
\[
 \int|D_{n,x}|\dd\omega(x)
 =\sum_{j}|J_{j}|\sum_{k:a_{jk}>h_{n}}a_{jk}
 \sim M(h_{n}),
\]
since $|J_{j}|\sim w$.

\begin{lemma}\label{imp:terminal-tail}
For every $\varepsilon_F>0$ and $h\le q\le1$,
\begin{equation}\label{imp:terminal-tail-explicit}
 M(q)\lesssim
 L_w^{(4-t)/(2-t)}w^{-(2\varepsilon_F+t\alpha_c)/(2-t)}(q/w)^{-p_t}.
\end{equation}
The constant may depend on $t,\alpha_c,\beta_c,\varepsilon_F$ and the
original Frostman constant, but not on $R$ or $q$.
\end{lemma}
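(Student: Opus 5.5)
The plan is to obtain \eqref{imp:terminal-tail-explicit} from the dual Furstenberg estimate, Lemma~\ref{lem:dual}. I first treat the dyadic thresholds $q=h_n$ and then interpolate. Fix $n$ and one of the finitely many slope charts, and let $D_x$ be the restriction of the final set $D_{n,x}$ to that chart. This is a measurable union of cells of length $\sim w$. By Lemma~\ref{imp:terminal}, the associated $Cw$-ball shadings $Y_x$ in the dual plane satisfy the $(\alpha_c,\beta_c/2)$ two-ends condition. Since $\beta_c/2<\alpha_c<1$, Lemma~\ref{lem:dual} applies with $(\alpha_{\rm te},\beta_{\rm te})=(\alpha_c,\beta_c/2)$, $\varepsilon=\varepsilon_F$, and the original Frostman bound of $\omega$; no renormalization is needed because $\omega$ has mass at most one. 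By the identity preceding the lemma, $M(h_n)\sim\int|D_{n,x}|\dd\omega(x)$, and summing over the fixed number of charts costs only a constant.

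The key step is to choose the set $S$ containing all the shadings and to bound its area by $M(h_n)$ itself.
\begin{itemize}
\item In the chart, the bin $Q_{jk}$ is the set of $x$ with $x_2-\kappa_jx_1$ in an interval of length $\sim w$, where $\kappa_j$ is the slope of cell $j$.
\item Hence, for $k=k_j(x)$, the dual line $\ell_x$ passes within $O(w)$ of a point $z_{jk}$ whose coordinates are essentially $(\kappa_j,kw)$ after the bi-Lipschitz chart change.
\item The ball of $Y_x$ above a cell $J_j\subset D_{n,x}$ therefore lies in a $C'w$-ball about $z_{jk_j(x)}$. By the definition of $D_{n,x}$ and Lemma~\ref{imp:terminal}, $k_j(x)\in\mathcal P_{n,j}$, that is, $a_{jk_j(x)}>h_n$.
\end{itemize}
So I take $S$ to be the union of the $C'w$-balls about the points $z_{jk}$ with $a_{jk}>h_n$. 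If $N$ is the number of such heavy bins, then
\[
 N\,w\,h_n\le w\sum_{a_{jk}>h_n}a_{jk}=M(h_n),
 \qquad\text{hence}\qquad
 |S|\lesssim w^2N\le wM(h_n)/h_n .
\]

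Inserting this into \eqref{eq:dual} gives
\[
 M(h_n)\lesssim L_w\,w^{-(2\varepsilon_F+t\alpha_c)/(4-t)}\bigl(wM(h_n)/h_n\bigr)^{2/(4-t)}.
\]
If $M(h_n)=0$ there is nothing to prove; otherwise $M(h_n)\le1$ is finite and we may divide by $M(h_n)^{2/(4-t)}$. This leaves $M(h_n)^{(2-t)/(4-t)}$ on the left. Raising to the power $(4-t)/(2-t)$ yields
\[
 M(h_n)\lesssim L_w^{(4-t)/(2-t)}\,w^{-(2\varepsilon_F+t\alpha_c)/(2-t)}\,(h_n/w)^{-2/(2-t)},
\]
which is \eqref{imp:terminal-tail-explicit} at $q=h_n$, since $p_t=2/(2-t)$.

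For general $q$ with $h\le q<1$, choose $n$ with $h_n\le q<h_{n+1}$. Monotonicity gives $M(q)\le M(h_n)$, and $h_n\sim q$ changes only the constant. For $q$ above the largest threshold below one, comparability holds up to a factor of two. The constants depend only on those of Lemma~\ref{lem:dual} and Lemma~\ref{imp:terminal}, so they are uniform in $R$ and $q$.

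I expect the main obstacle to be the dual-plane geometry in the second step. One must check that the point $z_{jk}$ and the ball chosen in Lemma~\ref{lem:dual} really sit within $O(w)$ of each other for every $x\in Q_{jk}$, so that a single fixed enlargement $C'$ works. This is what allows $|S|$ to be controlled by counting heavy bins rather than by the union of shadings directly. The remainder is the algebra of absorbing $M$ on both sides.
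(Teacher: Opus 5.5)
Your proposal is correct and follows the paper's proof essentially step for step. You bound the area of the enlarged dual shadings by $w^2$ times the number of heavy bins, hence by $(w/h_n)M(h_n)$, apply Lemma~\ref{lem:dual} with $(\alpha_{\rm te},\beta_{\rm te})=(\alpha_c,\beta_c/2)$, absorb the power of $M$, and pass to intermediate $q$ by monotonicity. The only cosmetic difference is that the paper first proves the bound for each chart's contribution $M_{\rm ch}(h_n)$ and then sums, whereas you bound each chart's area by the full $M(h_n)$ and sum before absorbing; this works equally well.
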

\begin{proof}
We estimate the area occupied by the dual rectangles in terms of
$M(q)$. The two-ends estimate then gives an inequality for $M(q)$.
We first take $q=h_n$ and fix one slope chart. Let $M_{\rm ch}(h_n)$
be its contribution to $M(h_n)$. The occupied dual rectangles have
area at most
\[
 Cw^2|\{(j,k):j\text{ belongs to the chart},\ a_{jk}>h_n\}|
 \le C\frac{w}{h_n}M_{\rm ch}(h_n).
\]
To see this, let $s_j$ be the central slope. Up to a fixed sign,
$x\cdot e_j^\perp=(x_2-s_jx_1)/\sqrt{1+(s_j)^2}$, so a transverse bin
fixes the dual height at $s_j$ to an interval of length $O(w)$.
Varying the slope across $J_j$ changes the height by at most
$O(w)|x_1|=O(w)$. One fixed enlargement therefore contains the
$Cw$-ball shading associated with each occupied bin.
By Lemma~\ref{imp:terminal}, these shadings satisfy the two-ends
condition. Lemma~\ref{lem:dual} gives
\[
 M_{\rm ch}(h_n)\lesssim L_w
 w^{-(2\varepsilon_F+t\alpha_c)/(4-t)}
 \left(\frac{w}{h_n}M_{\rm ch}(h_n)\right)^{2/(4-t)}.
\]
If $M_{\rm ch}(h_n)>0$, we divide by its power on the right and raise
to $(4-t)/(2-t)$. This gives
\[
 M_{\rm ch}(h_n)\lesssim
 L_w^{(4-t)/(2-t)}w^{-(2\varepsilon_F+t\alpha_c)/(2-t)}
 (h_n/w)^{-p_t}.
\]
The estimate is immediate when $M_{\rm ch}(h_n)=0$. We sum over the
fixed number of charts. For an intermediate $q$, we choose the largest
$h_n\le q$. Then $q<2h_n$, and monotonicity gives the same bound.
This completes the proof.
\end{proof}
\begin{lemma}
\label{imp:cap-bin-energy}
Let $\tau$ be a positive measure supported in the fixed ambient ball,
and set $a_k=\tau(Q_{jk})$. Uniformly for $r\sim R$,
\[
 \int_{S^1}|\chi_j(e)\widehat\tau(re)|^2\dd\sigma(e)
       \lesssim w\sum_k a_k^2.
\]
No spatial regularity of a density defining $\tau$ is required.
\end{lemma}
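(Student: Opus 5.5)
We decompose $\tau=\sum_k\tau_k$ with $\tau_k=\tau|_{Q_{jk}}$, so that $a_k=\tau_k(\R^2)$. The strips $Q_{jk}$ are perpendicular to $e_j^\perp$, and on $\supp\chi_j$ the frequency $re$ lies within angle $O(w)$ of $e_j$. The plan is to exhibit almost orthogonality among the pieces $\widehat\tau_k(re)$ on this arc. After that, a trivial bound on each piece will suffice.

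Write $e=\cos\psi\,e_j+\sin\psi\,e_j^\perp$ with $|\psi|\lesssim w$. Then
\[
 \int|\chi_j(e)\widehat\tau(re)|^2\dd\sigma(e)
 =\sum_{k,k'}\iint K(x,y)\dd\tau_k(x)\dd\tau_{k'}(y),
\]
where $K(x,y)=\int\chi_j(e)^2e^{-2\pi i r(x-y)\cdot e}\dd\sigma(e)$. We parametrize by $\psi$ and set $z=x-y$, $z_1=z\cdot e_j$, $z_2=z\cdot e_j^\perp$. The phase is then $r(z_1\cos\psi+z_2\sin\psi)$.

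The goal is the kernel bound
\[
 |K(x,y)|\lesssim_N w\,(1+|z_2|/w)^{-N}\qquad\text{for } |z|\lesssim1 .
\]
The amplitude $\chi_j^2$ has $\psi$-derivatives of size $C_kw^{-k}$ on an interval of length $O(w)$. The $\psi$-derivative of the phase is $r(-z_1\sin\psi+z_2\cos\psi)$. Its term $rz_2\cos\psi$ is of size $r|z_2|$, while the correction $r|z_1|\,|\psi|$ can be as large as $rw\sim R^{1/2+\delta_{\rm freq}}$.

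This correction is the main obstacle. A naive integration by parts only yields decay in $|z_2|\gg w$, together with a quadratic curvature term $rz_1\psi^2$ of size $rw^2=R^{2\delta_{\rm freq}}$. That term is not bounded, so plain nonstationary phase loses a factor $R^{O(\delta_{\rm freq})}$.

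I would first try a decomposition that does not need sharp kernel decay. Split the arc $\supp\chi_j$ into $O(w/\rho)$ fine caps of length $\rho=r^{-1/2}$, exactly as in the proof of Lemma~\ref{lem:light}. On a fine cap centred at $\theta$, the phase is linear up to a bounded error. Integration by parts then gives a kernel bounded by $\rho(1+|z\cdot e_\theta^\perp|/\rho)^{-N}$, which is concentrated in a $\rho$-strip.

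This fine-cap kernel must then be summed in $\theta$, and two separate bounds are needed. Since $e_\theta^\perp$ differs from $e_j^\perp$ by $O(w)$ and $|z|\lesssim1$, we have $z\cdot e_\theta^\perp=z_2+O(w)$. Two cases follow.
\begin{itemize}
\item When $|z_2|\ge Cw$, every fine-cap kernel is tiny, and summing the $w/\rho$ caps gives rapid decay in $|z_2|/w$ after a further integration by parts in the slowly varying variable.
\item When $|z_2|\lesssim w$, the full kernel is trivially bounded by $\int\chi_j^2\,\dd\sigma\lesssim w$.
\end{itemize}
Together these give $|K|\lesssim w(1+|z_2|/w)^{-N}$, although the off-diagonal decay may need refinement in how the caps are summed.

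With the kernel bound in hand the sum is routine. For $x\in Q_{jk}$ and $y\in Q_{jk'}$ we have $|z_2|\ge(|k-k'|-1)w$. Hence
\[
 \sum_{k,k'}\iint|K|\dd\tau_k\dd\tau_{k'}\lesssim w\sum_{k,k'}(1+|k-k'|)^{-N}a_ka_{k'}\lesssim w\sum_k a_k^2,
\]
where the last step uses Schur's test, or $2a_ka_{k'}\le a_k^2+a_{k'}^2$. Only the masses $a_k$ of $\tau$ enter, which matches the statement that no regularity of $\tau$ is needed. The bounds are uniform for $r\sim R$.
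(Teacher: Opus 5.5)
Your proof is correct, and its skeleton is the paper's: expand the square, prove $|K|\lesssim_N w(1+|z\cdot e_j^\perp|/w)^{-N}$, then use $|z_2|\ge(|k-k'|-1)w$ and the $\ell^2$ convolution (Schur) bound. The only difference is how you obtain the kernel bound, and the ``main obstacle'' you describe there does not exist.

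The paper simply integrates by parts over the whole $O(w)$-arc. Write $\Theta(\psi)=r(z_1\cos\psi+z_2\sin\psi)$ and take $|z_2|\ge Cw$ with $C$ a large fixed constant. The correction $r|z_1||\sin\psi|\lesssim rw$ is then dominated, so $|\Theta'|\ge r|z_2|/2$, while $|\Theta^{(k)}|\lesssim r$ for all $k\ge2$. Each integration by parts gains at most $(rw|z_2|)^{-1}$ from the amplitude. The phase contributes $r/(r|z_2|)^2=(r|z_2|^2)^{-1}\le(rw|z_2|)^{-1}$. Hence $|K|\lesssim_N w(rw|z_2|)^{-N}\le w(|z_2|/w)^{-N}$, because $rw^2\gtrsim R^{2\delta_{\rm freq}}\ge1$.

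The large quadratic term $rz_1\psi^2$ would cost something only if you linearized the phase and absorbed the remainder into the amplitude. That step is unnecessary here, and the size of $rw^2$ actually helps rather than hurts. Your claim that plain nonstationary phase loses $R^{O(\delta_{\rm freq})}$ is therefore wrong, though your proof does not rely on it.

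Your fine-cap route is also valid, and it closes more easily than you suggest. When $|z_2|\ge Cw$, every fine cap has $|z\cdot e_\theta^\perp|\ge|z_2|/2$. Summing the absolute values of the $O(w/\rho)$ cap kernels then gives $w(|z_2|/\rho)^{-N}\le w(|z_2|/w)^{-N}$ directly. No further integration by parts in a slowly varying variable, and no refinement of the summation, is needed. Both routes use the same input, since $\rho\le w$ is equivalent to $rw^2\ge1$; the direct argument is just shorter.
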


\begin{proof}
Expanding the square gives
\[
 \int_{S^1}|\chi_j(e)\widehat\tau(re)|^2\dd\sigma(e)
 =\iint K_{j}(x-y)\dd\tau(x)\dd\tau(y),
\]
where
\[
 K_j(x-y)=\int_{S^1}|\chi_j(e)|^2
                     e^{-2\pi i r(x-y)\cdot e}\dd\sigma(e).
\]
Bounded support and repeated angular integration by parts give
\[
 |K_j(z)|\lesssim_N
 w(1+|z\cdot e_j^\perp|/w)^{-N}.
\]
For transverse separation $O(w)$, the bound follows from the length
of the cap. Outside a sufficiently large fixed multiple of $w$, the phase
derivative is comparable to $R|z\cdot e_j^\perp|$. Smoothness of the
cap and $Rw^2\geq1$ give the displayed decay.
For $x\in Q_{jk}$ and $y\in Q_{j,k'}$ this is bounded by
$C_Nw(1+|k-k'|)^{-N}$. Thus, the left side is at most
\[
 C_N w\sum_{k,k'}(1+|k-k'|)^{-N}a_k a_{k'}
 \lesssim w\sum_k a_k^2,
\]
by the $\ell^2$ convolution inequality for a summable sequence.
This completes the proof.
\end{proof}

We now use the transverse patches $\zeta_{j,k}$ and their fixed enlarged strips from Lemma~\ref{lem:light}. Let $\mathrm l_j$ be the sum of patches whose enlarged strips meet no bin of $\omega_j$-mass greater than $h$.
Each enlarged strip has width $4C_*w$, so it meets at most
$4C_*+2$ bins and has $\omega_j$-mass at most $(4C_*+2)h$. Set
\begin{equation}\label{imp:fixed-symbols}
 \begin{split}
 c(x,e)&=\sum_j\chi_j(e)c_j(x),\\
 \mathrm l(x,e)&=\sum_j\chi_j(e)(1-c_j(x))\mathrm l_j(x),\\
 d(x,e)&=\sum_j\chi_j(e)(1-c_j(x))(1-\mathrm l_j(x)).
 \end{split}
\end{equation}
Then $c+\mathrm l+d=1$, and all three symbols are nonnegative.
For a bounded symbol $b(x,e)$, define
\begin{equation}\label{imp:symbol-transform}
 \mathcal F_\omega^b(\xi)=
 \int e^{-2\pi ix\cdot\xi}b(x,\xi/|\xi|)\dd\omega(x),\qquad
 S_\omega^b(r)=\int_{S^1}|\mathcal F_\omega^b(re)|^2\dd\sigma(e).
\end{equation}
For the two measures, define $T_R(b,b')$ by
\begin{equation}\label{imp:bilinear-output}
 \widehat{T_R(b,b')}(g,\xi)=\phi(|\xi|/R)
 \mathcal F_\mu^b(g^{-1}\xi)\overline{\mathcal F_\nu^{b'}(\xi)}.
\end{equation}
The Fourier transform here is in the planar variable $z$.
Norms of $T_R$ below are taken with respect to $\dd m(g)\dd z$ on
$\OO\times\R^2$.

\begin{lemma}\label{imp:fixed-source-bounds}
For every $\varepsilon_{\rm dec}>0$, let
$0<\delta_{\rm freq}\le\delta_0(\varepsilon_{\rm dec})$ be as in
Lemma~\ref{lem:light}. Uniformly for $r\asymp R$,
\begin{equation}\label{imp:light}
 S_\omega^{\mathrm l}(r)\lesssim
 R^{\varepsilon_{\rm dec}}w^{-\beta_c}L_w^4A_{\rm l}(h).
\end{equation}
For $b=1,c$,
\begin{equation}\label{imp:ann}
 \int_{R/2}^{2R}rS_\omega^b(r)\dd r
 \lesssim w^{-\beta_c}L_w^4R^{2-t}.
\end{equation}
For $b=1$, the factor $w^{-\beta_c}L_w^4$ can be omitted.
The constants may depend on the fixed auxiliary parameters and the
original Frostman constant, but not on $R$.
\end{lemma}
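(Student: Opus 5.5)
The plan is to reduce each estimate to a statement about finitely many angular pieces of $\omega$, and then apply Lemma~\ref{lem:light}, Lemma~\ref{lem:frostman-annular} and Lemma~\ref{lem:sector}. The key structural fact comes from Lemma~\ref{imp:terminal}: at each $x$ the clustered symbol $c(x,\cdot)$ is a sum of at most $K\lesssim w^{-\beta_c}L_w^2$ functions $\chi_{I}$, where each $I$ is a recorded interval of consecutive cells. The finite construction makes only finitely many choices, so we can partition the support into measurable pieces. Concretely, we index the recorded intervals by their position in the fixed ordered list of test intervals and write $c(x,e)=\sum_{m}\sum_{I}\one_{E_{m,I}}(x)\chi_I(e)$. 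Here $m\le K$ is the order of recording and $E_{m,I}$ is the set of $x$ whose $m$-th recorded interval is $I$. Overlaps between recorded intervals are not an issue, since the selected cells are exactly the union of recorded intervals and $c_j\in\{0,1\}$. To avoid double counting, we record each interval only as the new cells it adds. These are still unions of $O(1)$ consecutive runs, or we simply accept the $\chi_{I\setminus\text{previous}}$ pieces.

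\textbf{Estimate \eqref{imp:ann}.} For $b=1$ this is exactly \eqref{eq:frostman-annular-polar}, with no loss. For $b=c$, fix $m$ and consider $\mathcal F^{c_m}_\omega(\xi)$, the sum over the disjoint sets $E_{m,I}$. On the annulus $|\xi|\sim R$ it equals $\sum_I\chi_I(\xi/|\xi|)\widehat{\omega_{m,I}}(\xi)$, where $\omega_{m,I}=\one_{E_{m,I}}\omega$. The difficulty is that $I$ varies with $x$. We handle this by Cauchy--Schwarz over the $K$ values of $m$, which costs a factor $K$, and then, for fixed $m$, by bounding the annular $L^2$ norm of $\sum_I\chi_I\widehat{\omega_{m,I}}$. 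This is done through the spatial form: for a fixed Schwartz bump $\Phi_R$ the quantity is $\iint k_{I,I'}(x-y)\,d\omega_{m,I}(x)\,d\omega_{m,I'}(y)$, and the kernels $k_{I,I'}$, the inverse transforms of $\phi^2\chi_I\chi_{I'}$ at scale $R$, obey $|k_{I,I'}|\lesssim L_w^2\,(R^2\Phi(R\cdot)$-type majorant after averaging$)$. A cleaner route is to dominate pointwise, using $|\chi_I|\le1$ and $\chi_I\ge0$, and then use the $L^1$ kernel bound of Lemma~\ref{lem:sector} together with Schur's test against the Frostman bound. Since the $\omega_{m,I}$ have disjoint supports summing to at most $\omega$, each pair contributes $\lesssim L_w^2R^2\,\omega_{m,I}\otimes\omega_{m,I'}(\{|x-y|\lesssim R^{-1}\})$ plus rapidly decaying tails. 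Summing over pairs gives $L_w^2R^{2-t}$ by the argument of Lemma~\ref{lem:frostman-annular}. The main obstacle is this step: the kernel of $\chi_I$ is not localized at scale $R^{-1}$ in all directions, since it is only $L^1$-bounded with logarithmic loss. If the pointwise approach fails, the fallback is to replace $\widehat{\omega}$ by $\widehat{\omega}\,\phi$ and use $L^2(\omega)$ duality with \eqref{eq:trace}. That is, test against $F$ with Fourier support in the annulus, write $\langle \mathcal F^{c_m}_\omega,\widehat F\rangle=\sum_I\int_{E_{m,I}}(\chi_I(D)F)\,d\omega$, and bound this by Cauchy--Schwarz as $\le\omega(\R^2)^{1/2}\|\sup_I|\chi_I(D)F|\|_{L^2(\omega)}$. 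The supremum is over $O(w^{-N})$ intervals, and dyadic interval decomposition reduces it to $O(L_w)$ disjoint families. Each family is controlled by a square function, whose $L^2(\omega)$ norm is at most $R^{(2-t)/2}\|F\|_2$ by \eqref{eq:trace} and orthogonality. Together with the factor $K$ this produces $w^{-\beta_c}L_w^4$.

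\textbf{Estimate \eqref{imp:light}.} We expand $\mathcal F^{\mathrm l}_\omega(re)$ by duality: $S^{\mathrm l}_\omega(r)=\sup\bigl|\int\sum_j(1-c_j)\mathrm l_j\,E_r(\chi_jf)\,d\omega\bigr|^2$ over $\|f\|_{L^2(\sigma)}\le1$. We again split $1-c_j(x)$ using the at most $K$ recorded intervals. The complement of the selected set is a union of at most $K+1$ runs of consecutive cells, so for $x$ in a fixed measurable class the function $j\mapsto 1-c_j(x)$ is the indicator of a fixed subfamily of angular cells. Lemma~\ref{lem:light} is uniform in subfamilies of angular cells, and the strips selected by $\mathrm l_j$ satisfy \eqref{eq:light-test} with $q=(4C_*+2)h$ using $\omega_j\le\omega$. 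Applying Cauchy--Schwarz in $L^2(\omega)$ for each class then gives $R^{\varepsilon_{\rm dec}-t/3}h^{2/3}\sum_j\|\chi_jf\|_2^2$. Summing the classes through the same $K$-fold Cauchy--Schwarz and the logarithmic run decomposition yields the factor $w^{-\beta_c}L_w^4$. One further point needs checking: the light test uses $\omega_j$-masses, while Lemma~\ref{lem:light} is phrased with one measure $\omega$. I would resolve this by applying the lemma to $\omega$ restricted to each class, where $\omega_j$ coincides with $\omega$ on the retained cells.
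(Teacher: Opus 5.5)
\textbf{Gap: the estimates come out with $K^2$ instead of $K$.} Your overall plan is sound. You restrict $\omega$ to measurable classes on which the angular set is fixed, and apply Lemma~\ref{lem:light} to the restricted measure, using $\omega|_{\mathcal C}\le\omega_j$ for retained $j$. For the clustered part you use Lemma~\ref{lem:frostman-annular} or its dual \eqref{eq:trace}. You are right that the pointwise Schur route fails, since the sector kernels are not dominated by $R^2\Phi(R\,\cdot)$. Your fallback duality and square-function argument is valid for a single $m$. The error is in how you count $K$. You index blocks by recording order $m\le K$ and apply Cauchy--Schwarz over $m$, which costs a factor $K$. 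The sets $E_m=\bigcup_I E_{m,I}$ are nested, not disjoint. So the per-$m$ bounds $L_w^2R^{2-t}\omega(E_m)$, respectively $L_w^2R^{-t/3+\varepsilon_{\rm dec}}h^{2/3}\omega(E_m)$, sum to $\int K(x)\,d\omega(x)$, which can be of size $K$. Both estimates therefore come out as $K^2L_w^2\lesssim w^{-2\beta_c}L_w^6$ times the main term, not the stated $w^{-\beta_c}L_w^4$. This weaker bound would be harmless in the proof of Theorem~\ref{thm:breaking-four-thirds}, where $\beta_c\le\Delta/256$, but it does not prove the lemma as stated.

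\textbf{The missing idea: drop the index $m$.} Cut the cyclic grid at a fixed index. Decompose every block at $x$ into canonical dyadic intervals: selected cells for $c$, unselected cells for $\mathrm l$. Set $\omega_J=\xi_J\omega$, where $\xi_J(x)=1$ exactly when $J$ is one of the pieces at $x$.
\begin{itemize}
\item Canonical intervals of one length are either equal or disjoint. So the pieces of a fixed length have bounded angular overlap, uniformly in $x$ and across all blocks, and orthogonality in $e$ costs nothing.
\item $K$ then enters only through $\sum_{J\text{ of one length}}\omega_J(\R^2)\le2(K+1)$. Cauchy--Schwarz is needed only over the $O(L_w)$ lengths, which gives $(K+1)L_w^2$.
\item Each symbol $\chi_J$ is now independent of $x$. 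The clustered bound then follows from $|\chi_J|\le1$ and Lemma~\ref{lem:frostman-annular} applied to $\omega_J$, with no kernel or maximal-function argument.
\item For $\mathrm l$, apply the adjoint of Lemma~\ref{lem:light} to $1\in L^2(\omega_J)$, using $\omega_J\le\omega_j$ for $j\in J$, as you correctly noted.
\item Your ``new cells'' device for overlapping recorded intervals becomes unnecessary.
\end{itemize}
If you prefer to keep your maximal-function framework, the same saving is available there. At each length, apply Cauchy--Schwarz pointwise over the at most $2(K+1)$ canonical pieces present at $x$, rather than over $m$.
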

\begin{proof}
We first apply the light estimate to one fixed positive restriction
of $\omega$. We then partition the selected angular sets into dyadic
intervals, so that this estimate applies to each resulting restriction.
Let $I$ be an angular interval made up of cells, and let
$\omega_0\le\omega$ be a fixed positive measure. Suppose that, for
$j\in I$, each strip selected in $\mathrm l_j$ has enlarged-strip
$\omega_0$-mass at most $(4C_*+2)h$. Apply Lemma~\ref{lem:light}
to these cells with $q=(4C_*+2)h$, which lies in $[w,1]$ for
sufficiently large $R$. The Frostman constant is unchanged.
To apply duality, define
\[
 \mathcal E_{\rm sel}f(x)=\sum_{j\in I}\mathrm l_j(x)
    \int_{S^1}e^{2\pi irx\cdot e}\chi_j(e)f(e)\dd\sigma(e).
\]
The lemma and bounded angular overlap give
\[
 \|\mathcal E_{\rm sel}\|_{L^2(\sigma)\to L^2(\omega_0)}^2
 \lesssim R^{-t/3+\varepsilon_{\rm dec}}h^{2/3}.
\]
Here we used $\sum\limits_j\|\chi_jf\|_2^2\lesssim\|f\|_2^2$.
Fubini's theorem identifies the adjoint as
\[
 \mathcal E_{\rm sel}^*F(e)=\sum_{j\in I}\chi_j(e)
       \int e^{-2\pi irx\cdot e}\mathrm l_j(x)F(x)\dd\omega_0(x).
\]
Thus, $\|\mathcal E_{\rm sel}^*1\|_2^2\le\|\mathcal E_{\rm sel}\|^2
\|1\|_{L^2(\omega_0)}^2$ gives
\begin{equation}\label{imp:positive-source-light}
 \left\|\sum_{j\in I}\chi_j(e)\int e^{-2\pi irx\cdot e}
                  \mathrm l_j(x)\dd\omega_0(x)\right\|_{L^2(\sigma)}^2
 \lesssim R^{-t/3+\varepsilon_{\rm dec}}h^{2/3}\omega_0(\R^2).
\end{equation}
No spatial regularity of the density defining $\omega_0$ is needed.

At each point $x$, we cut the cyclic angular grid at a fixed index and
partition the consecutive blocks with $c_j(x)=0$ into canonical dyadic
intervals. At any given length there are at most $2(K+1)$ such intervals.
For each dyadic interval $I$, let $\xi_I(x)$ indicate that this interval
is selected, and set $\omega_I=\xi_I\omega$. The choices are measurable,
and the partition gives
\[
 1-c_{j}(x)=\sum_{I:j\in I}\xi_I(x).
\]
Therefore,
\[
 \mathcal F_{\omega}^{\mathrm l}(re)
 =\sum_{I}\sum_{j\in I}\chi_j(e)
   \int e^{-2\pi irx\cdot e}\mathrm l_{j}(x)\dd\omega_{I}(x).
\]
For $j\in I$, we have $\omega_I\le\omega_j$. Thus, each patch in
$\mathrm l_j$ has enlarged-strip $\omega_I$-mass at most $(4C_*+2)h$.
Apply \eqref{imp:positive-source-light} to the cells $j\in I$ with
this one fixed measure. The squared norm of their contribution is at most
$CR^{-t/3+\varepsilon_{\rm dec}}h^{2/3}\omega_I(\R^2)$.

At one dyadic length the angular supports have bounded overlap, and
\[
 \sum_{I\text{ of this length}}\omega_I(\R^2)\le2(K+1).
\]
There are $O(L_w)$ lengths. Cauchy--Schwarz across these lengths gives
\[
 S_\omega^{\mathrm l}(r)\lesssim
 (K+1)L_w^2R^{-t/3+\varepsilon_{\rm dec}}h^{2/3}.
\]
Since $K+1\lesssim w^{-\beta_c}L_w^2$, this proves
\eqref{imp:light}.

For $c$, we decompose the blocks with $c_j(x)=1$ in the same way.
By Lemma~\ref{lem:frostman-annular}, each resulting positive restriction
has annular energy bounded by $CR^{2-t}$ times its mass.
Bounded overlap at each length
and Cauchy--Schwarz across lengths give
\[
 \int_{R/2}^{2R}rS_\omega^c(r)\dd r
 \lesssim (K+1)L_w^2R^{2-t}.
\]
For $b=1$, Lemma~\ref{lem:frostman-annular} gives the estimate
without this factor. Substituting the bound for $K+1$ proves
\eqref{imp:ann}.
This completes the proof.
\end{proof}
\begin{lemma}\label{imp:cluster-pair}
Suppose that, at each source point, the symbol $b_1$ is a sum of
sector cutoffs associated with at most $K_\mu$ angular intervals,
whose total length is at most $\Lambda_\mu$. Suppose that $b_2$
has the corresponding bounds $K_\nu,\Lambda_\nu$.
The intervals are unions of the fixed angular cells and may depend
measurably on the source point. Then
\begin{equation}\label{imp:cluster-general}
 \|T_R(b_1,b_2)\|_1
 \lesssim L_w^2\bigl(K_\nu\Lambda_\mu
             +K_\mu\Lambda_\nu+K_\mu K_\nu w\bigr).
\end{equation}
In particular, common bounds $K,\Lambda$ give
$\|T_R(b_1,b_2)\|_1\lesssim(K\Lambda+K^2w)L_w^2$.
For the clustered symbols in \eqref{imp:fixed-symbols},
\begin{equation}\label{imp:cc}
 \|T_R(c,c)\|_{L^1(\dd m(g)\dd z)}
 \lesssim K^2w^{\alpha_c}L_w^2
 \lesssim w^{\alpha_c-2\beta_c}L_w^6.
\end{equation}
\end{lemma}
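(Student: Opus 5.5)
The plan is to pass to fixed source points, bound the $L^1$ norm of each resulting kernel by counting overlapping sector pairs, and then average over $g$. Fix $x\in\supp\mu$, $y\in\supp\nu$ and $g\in\OO$. Define $K_{x,y,g}$ as the inverse Fourier transform of
\[
 \phi(|\xi|/R)\,b_1\bigl(x,g^{-1}\xi/|\xi|\bigr)\,\overline{b_2(y,\xi/|\xi|)}.
\]
Inserting the definitions \eqref{imp:symbol-transform} into \eqref{imp:bilinear-output}, and using that $g$ is orthogonal so that $x\cdot g^{-1}\xi=gx\cdot\xi$, gives
\[
 T_R(b_1,b_2)(g,z)=\iint K_{x,y,g}(z-gx+y)\dd\mu(x)\dd\nu(y).
\]
Fubini is justified because the symbols are bounded and $\phi(|\cdot|/R)$ has compact support, so all $\xi$-integrals converge absolutely. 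Minkowski's inequality and translation invariance of $\dd z$ then give
\[
 \|T_R(b_1,b_2)\|_{L^1(\dd m\dd z)}\le\iiint\|K_{x,y,g}\|_{L^1}\dd m(g)\dd\mu(x)\dd\nu(y).
\]
The rest of the argument bounds the inner $L^1$ norm pointwise and averages it in $g$.

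At fixed $x$, write $b_1(x,\cdot)=\sum_{i\le K_\mu}\chi_{I_i}$, and at fixed $y$, write $b_2(y,\cdot)=\sum_{k\le K_\nu}\chi_{J_k}$. Then $b_1(x,g^{-1}\cdot)=\sum_i\chi_{I_i}\circ g^{-1}$ is a sum of sector cutoffs on the arcs $gI_i$. These are built from a rotated (or reflected) copy of the same smooth partition, and the proof of Lemma~\ref{lem:sector} is invariant under such isometries. Expanding the product gives at most $K_\mu K_\nu$ pairs. For each pair, the product symbol vanishes unless the $O(w)$-enlarged supports of $gI_i$ and $J_k$ meet. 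When it does not vanish, Lemma~\ref{lem:sector} (the product-kernel statement) bounds its kernel in $L^1$ by $CL_w^2$. Hence
\[
 \|K_{x,y,g}\|_1\lesssim L_w^2\,\#\{(i,k):gI_i\cap N_{Cw}(J_k)\ne\varnothing\}.
\]
On each component of $\OO$, the Haar measure of the set of $g$ for which the arcs $gI$ and $N_{Cw}(J)$ meet is $O(|I|+|J|+w)$. Summing over pairs gives
\[
 \int\|K_{x,y,g}\|_1\dd m(g)\lesssim L_w^2\bigl(K_\nu\Lambda_\mu+K_\mu\Lambda_\nu+K_\mu K_\nu w\bigr).
\]
Integrating against the probability measures $\mu,\nu$ proves \eqref{imp:cluster-general}. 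Measurable dependence of the intervals on the source point makes the integrand measurable.

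For \eqref{imp:cc}, note that $c_j(x)\in\{0,1\}$, so $c(x,\cdot)=\chi_{U(x)}$, where $U(x)$ is the union of the recorded intervals at $x$. By Lemma~\ref{imp:terminal}, $U(x)$ splits into at most $K$ disjoint consecutive blocks, with $K\lesssim w^{-\beta_c}L_w^2$. Their total length is $\Lambda\lesssim Kw^{\alpha_c}$. Applying the common-bound case gives
\[
 \|T_R(c,c)\|_1\lesssim(K^2w^{\alpha_c}+K^2w)L_w^2\lesssim K^2w^{\alpha_c}L_w^2,
\]
since $\alpha_c<1$. Substituting the bound for $K$ gives $w^{\alpha_c-2\beta_c}L_w^6$.

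The main obstacle is the kernel step. Lemma~\ref{lem:sector} must be applied uniformly to the rotated or reflected sector $g$-image paired with an unrotated one, and the vanishing claim must hold exactly. For this I would use that the smooth cutoffs are supported within $O(w)$ of their arcs, and enlarge the overlap condition by $Cw$. This enlargement is what produces the $K_\mu K_\nu w$ term.
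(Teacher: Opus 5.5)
Your proposal is correct and follows essentially the same argument as the paper. You fix the source points and expand into interval pairs. You bound each non-vanishing pair kernel by $CL_w^2$ via Lemma~\ref{lem:sector}; this bound is uniform in $g$, since rotating or reflecting a multiplier leaves the $L^1$ norm of its kernel unchanged and the product bound comes from factoring into a convolution. A pair survives only for a set of $g$ of Haar measure $O(|I|+|J|+w)$ on each component of $\OO$, and you then integrate against $\mu\otimes\nu$. The clustered estimate then follows exactly as in the paper from Lemma~\ref{imp:terminal} and $w\le w^{\alpha_c}$.
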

\begin{proof}
Fix source points $x,y$. For a pair of their angular intervals $I,J$,
the product of the corresponding cutoffs after a random
rotation is identically zero unless their smooth supports meet. This
occurs with probability at most $C(|I|+|J|+w)$, whereas the nonzero
annular kernel has $L^1$ norm at most $CL_w^2$ by
Lemma~\ref{lem:sector}. Thus, the integral in $g$ of the $L^1$ norm
of the kernel for this pair is at most
$CL_w^2(|I|+|J|+w)$. Summing gives
\[
 CL_w^2\left(K_\nu\sum_I|I|+K_\mu\sum_J|J|
                      +K_\mu K_\nu w\right)
 \le CL_w^2(K_\nu\Lambda_\mu+K_\mu\Lambda_\nu+K_\mu K_\nu w).
\]
Integration over the probability measures in $x,y$, and translation
invariance of the kernel norm, prove \eqref{imp:cluster-general}.
For positive finite measures the right side is multiplied by
the product of their masses. The same argument works on each
component of $\OO$.

For the clustered symbols, the selected cells form at most $K$ disjoint
consecutive blocks of total length at most $CKw^{\alpha_c}$, by
Lemma~\ref{imp:terminal}. Applying the common-bound estimate and
using $w\le w^{\alpha_c}$ gives \eqref{imp:cc}.
This completes the proof.
\end{proof}

The exponent $\alpha_c-2\beta_c$ is positive. Thus, the $L^1$
estimate for the clustered pair has the required decay, without a
pointwise Fourier estimate for either clustered part.

We keep the final coefficients $c_j$ and the fixed transverse partition.
Let $U_{q,j}$ be the
sum of patches whose required enlargements meet no $\omega_j$-bin of mass
greater than $q$. Let $n_h$ be the least integer with $2^{n_h}h\ge1$. For
$q=2^nh$, $0\le n<n_h$, define
\begin{equation}\label{imp:exact-layers}
 A_{q,j}=U_{2q,j}-U_{q,j},\qquad b_{q,j}=(1-c_j)A_{q,j}.
\end{equation}
The lists are nested, so $A_{q,j}$ is a nonnegative sum of newly selected
patches. Since $\omega_{j}(\R^2)\le1$ and $2^{n_h}h\ge1$, we have
$U_{2^{n_{h}}h,j}=1$. Thus,
\[
 \sum_{n=0}^{n_{h}-1}A_{2^{n}h,j}
 =U_{2^{n_{h}}h,j}-U_{h,j}=1-\mathrm l_{j}.
\]
This identity retains the contributions of neighboring inactive bins.
Every enlarged strip belonging to the layer has $\omega_j$-mass at most
$Cq$.

\begin{lemma}\label{imp:layer}
For each layer, at most $CM(q)/(wq)$ transverse bins meet its support,
and every such bin has $\omega_j$-mass at most $Cq$.
The estimates below hold uniformly for $r\asymp R$.
For $b_q=\sum\limits_j\chi_j b_{q,j}$ and every $\varepsilon_F>0$,
\begin{equation}\label{imp:layer-bound}
 S_{\omega}^{b_q}(r)\lesssim
 L_w^{(4-t)/(2-t)}w^{-(2\varepsilon_F+t\alpha_c)/(2-t)}A_{\rm d}(q).
\end{equation}
Hence, the full residual symbol satisfies
\begin{equation}\label{imp:base-residual}
 S_\omega^d(r)\lesssim
 L_w^{(4-t)/(2-t)}w^{-(2\varepsilon_F+t\alpha_c)/(2-t)}A_{\rm d}(h).
\end{equation}
For any positive measures $\tau_j\le b_{q,j}\omega$,
\begin{equation}\label{imp:schur}
 \int_{S^1}\left|\sum_j\chi_j(e)\widehat\tau_j(re)\right|^2\dd\sigma(e)
 \lesssim w\sum_{jk}\tau_j(Q_{jk})^2.
\end{equation}
\end{lemma}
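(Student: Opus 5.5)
We prove the four assertions in order: the bin count, then the Schur-type bound \eqref{imp:schur}, then the layer bound \eqref{imp:layer-bound}, and finally the residual bound \eqref{imp:base-residual}.

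\emph{Bin count.} A patch enters $A_{q,j}$ only if its required enlargement meets some $\omega_j$-bin of mass greater than $q$. It must also meet no bin of mass greater than $2q$. Each enlarged strip meets at most $4C_*+2$ bins, so every bin meeting the support of the layer has mass at most $2q\le Cq$. Each such bin lies within a bounded number of bins of a bin with $a_{jk}>q$. By the definition of $M(q)$, the number of bins with $a_{jk}>q$ is at most $M(q)/(wq)$, because each contributes $wa_{jk}>wq$. Multiplying by the bounded neighbourhood factor gives $CM(q)/(wq)$ bins.

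\emph{Schur bound.} For \eqref{imp:schur}, expand the square as a sum over pairs $j,j'$ of angular cells. Only cells with overlapping supports of $\chi_j,\chi_{j'}$ interact, and these have bounded overlap. Cauchy--Schwarz then reduces the left side to $\sum_j\int|\chi_j\widehat\tau_j(re)|^2\dd\sigma$, up to a constant. Lemma~\ref{imp:cap-bin-energy} applied to each $\tau_j$ gives $w\sum_k\tau_j(Q_{jk})^2$. No regularity of $\tau_j$ is needed, as that lemma notes.

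\emph{Layer bound.} Apply \eqref{imp:schur} with $\tau_j=b_{q,j}\omega\le A_{q,j}\omega_j$. Then $\tau_j(Q_{jk})$ vanishes unless $Q_{jk}$ meets the layer support, and in that case $\tau_j(Q_{jk})\le Cq$. This gives
\[
 S_\omega^{b_q}(r)\lesssim w\cdot q^2\cdot \frac{M(q)}{wq}=qM(q).
\]
Inserting Lemma~\ref{imp:terminal-tail} yields $q\,(q/w)^{-p_t}$ times the stated loss factor, which is \eqref{imp:layer-bound}.

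\emph{Residual bound.} The identity $\sum_nA_{2^nh,j}=1-\mathrm l_j$ gives
\[
 d=\sum_{n<n_h}b_{2^nh}.
\]
By Minkowski in $L^2(\sigma)$,
\[
 (S_\omega^d)^{1/2}\le\sum_n (S_\omega^{b_{2^nh}})^{1/2}.
\]
Using \eqref{imp:layer-bound}, the summands are bounded by a constant times $A_{\rm d}(2^nh)^{1/2}$. Here $A_{\rm d}(q)=w^{p_t}q^{1-p_t}$ with $p_t>1$, so $A_{\rm d}(2^nh)^{1/2}$ decays geometrically in $n$. The sum is therefore dominated by its $n=0$ term, with a constant depending only on $t$. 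Squaring gives \eqref{imp:base-residual}.

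The main obstacle is the bin count, and specifically the neighbouring inactive bins. The support of $A_{q,j}$ includes bins whose own mass may be at most $q$, or even negligible. One must check that each is within $O(1)$ bins of an active bin with $a_{jk}>q$. This holds because the patch was excluded from $U_{q,j}$ precisely because its enlargement meets such a bin, and enlargements have width $O(w)$. A further point is that $\tau_j(Q_{jk})$ must be bounded by the residual $\omega_j$-mass of the bin, not by the original mass $\omega(Q_{jk})$. This is guaranteed by the factor $(1-c_j)$ in $b_{q,j}$.
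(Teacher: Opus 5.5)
Your proposal is correct and follows the paper's proof essentially step for step. The steps are: the neighbour count giving $CM(q)/(wq)$ support bins; bounded angular overlap plus Lemma~\ref{imp:cap-bin-energy} for \eqref{imp:schur}; the bound $S_\omega^{b_q}(r)\lesssim qM(q)$ combined with Lemma~\ref{imp:terminal-tail}; and Minkowski's inequality over the layers $d=\sum_n b_{2^nh}$, which is summable because of the geometric factor $2^{n(1-p_t)/2}$. One small point: the bound $a_{jk}\le 2q$ for support bins follows because the enlargement meets no bin of mass $>2q$, not from the count $4C_*+2$ you cite next to it, but your conclusion is right.
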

\begin{proof}
We count the bins supporting one layer and use their upper mass bound
to estimate its circular energy. Minkowski's inequality then allows
us to sum over the layers.
A newly selected patch has an enlargement meeting a bin of mass $>q$
but none of mass $>2q$. Every bin in its support is within boundedly many
bin indices of such a bin. There are at most $M(q)/(wq)$ bins of mass $>q$,
and each has boundedly many neighbors. The enlarged strip contains
each support bin, giving its upper mass bound.
By bounded angular overlap and Lemma~\ref{imp:cap-bin-energy},
\begin{align*}
 \int_{S^1}\left|\sum_j\chi_j(e)\widehat\tau_j(re)\right|^2\dd\sigma(e)
 &\lesssim\sum_{j}\int_{S^{1}}
       |\chi_{j}(e)\widehat\tau_{j}(re)|^{2}\dd\sigma(e)\\
 &\lesssim w\sum_{j,k}\tau_j(Q_{jk})^2.
\end{align*}
This proves \eqref{imp:schur}. Applying it to
$\tau_j=b_{q,j}\omega$, we obtain
\[
 S_{\omega}^{b_{q}}(r)\lesssim w\frac{M(q)}{wq}(Cq)^2
 \lesssim qM(q).
\]
Lemma~\ref{imp:terminal-tail} proves \eqref{imp:layer-bound}.
The identity $d=\sum\limits_{n=0}^{n_h-1}b_{2^nh}$ and Minkowski's inequality
then give
\begin{align*}
 S_\omega^d(r)^{1/2}
 &\le\sum_{n=0}^{n_h-1}S_\omega^{b_{2^nh}}(r)^{1/2}\\
 &\lesssim
 L_w^{(4-t)/(2(2-t))}w^{-(2\varepsilon_F+t\alpha_c)/(2(2-t))}
 A_{\rm d}(h)^{1/2}\sum_{n\ge0}2^{n(1-p_t)/2}.
\end{align*}
The series converges because $p_t>1$, and
$A_{\rm d}(h)=h(h/w)^{-p_t}=wH^{1-p_t}$.
Squaring proves \eqref{imp:base-residual}. The constants have the
same dependencies as in Lemma~\ref{imp:terminal-tail}.
This completes the proof.
\end{proof}

We have now estimated all three fixed symbols. The clustered pair has
small average $L^1$ norm, while every other pair has a factor with a
circular estimate and a factor with an annular estimate.

\section{Proof of Theorem \ref{thm:breaking-four-thirds}}
\label{sec:proof-subcritical}

We shall prove Theorem~\ref{thm:breaking-four-thirds} by summing the
frequency estimates from the preceding section. We then obtain an
integrability bound for the densities and a sufficient condition with
two different Frostman exponents.

\begin{proof}[Proof of Theorem~\ref{thm:breaking-four-thirds}]
We choose the parameters so that the clustered pair has summable
$L^1$ norms and the other pairs have summable $L^2$ norms.
On a fixed ball, these bounds give an $L^1$ density for $\lambda_g$.
By decreasing the Frostman exponent, we may assume that
$\sbase<t<4/3$. A common dilation and separate translations place
both supports in the fixed ball used above. These operations change
each rotated difference measure only by a dilation and a translation,
so they preserve absolute continuity.

We first choose all parameters independently of $R$.
If we first set $w=R^{-1/2}$ and omit the auxiliary losses, the two
quantities to be summed are
\begin{equation}\label{imp:decay-exponents}
 R^{2-t}A_{\rm l}(h)=w^{(8t-10-2b_0)/3},\qquad
 R^{2-t}A_{\rm d}(h)=w^{2t-3+(p_t-1)b_0}.
\end{equation}
Both powers are positive precisely when
\begin{equation}\label{imp:threshold-choice}
 \frac{(3-2t)(2-t)}{t}<b_0<4t-5,
\end{equation}
since $p_t-1=t/(2-t)$. The difference between the upper and lower
endpoints is
\[
 (4t-5)-\frac{(3-2t)(2-t)}{t}
 =\frac{2(t^2+t-3)}{t}>0.
\]
Thus, $t>\sbase$ makes this interval nonempty. Its endpoints lie
between $0$ and $1$ because $\sbase<t<4/3$.
We make the following choices in order.
\begin{enumerate}
\item Choose $b_0$ satisfying \eqref{imp:threshold-choice}, and let
\[
 \Delta=\min\left\{\frac{4t-5-b_0}{3},
        \frac{2t-3+(p_t-1)b_0}{2}\right\}>0.
\]
These are the two decay exponents in $R$ before the auxiliary losses.
\item Fix the analytic loss and the angular parameters by
\[
 \varepsilon_{\rm dec}=\frac{\Delta}{16},\qquad
 \alpha_c=\min\left\{\frac14,
                  \frac{\Delta(2-t)}{64(1+t)}\right\},\qquad
 \beta_c=\frac{\alpha_c}{4},\qquad
 \varepsilon_F=\frac{\alpha_c}{2}.
\]
In particular, $0<2\beta_c<\alpha_c<1$ and
\[
 \beta_c\le\frac{\Delta}{256},\qquad
 \frac{2\varepsilon_F+t\alpha_c}{2-t}\le\frac{\Delta}{64}.
\]
\item Let $\delta_0(\varepsilon_{\rm dec})$ be supplied by
Lemma~\ref{lem:light}, and choose
\[
 0<\delta_{\rm freq}<
 \min\left\{\delta_0(\varepsilon_{\rm dec}),\frac14,
 \frac{\Delta}{16\left(1+(p_t-1)b_0+\frac23(1-b_0)\right)}\right\}.
\]
Use $w=R^{-1/2+\delta_{\rm freq}}$ and the definitions in
\eqref{imp:scale-parameters} at every dyadic scale $R$.
\end{enumerate}
All constants below may depend on these fixed choices and on the
original Frostman constants, but not on $R$.

The norms below are taken over $dm(g)\,dz$.
Plancherel's theorem, polar coordinates and invariance of Haar
measure give
\begin{equation}\label{imp:rotation-identity}
 \|T_R(b_1,b_2)\|_2^2
 =2\pi\int_0^\infty r|\phi(r/R)|^2
                 S_\mu^{b_1}(r)S_\nu^{b_2}(r)\,dr.
\end{equation}
Here the first and second symbols use $\mu$ and $\nu$, respectively.
The identity $c+\mathrm l+d=1$ implies
\[
 1-c_\mu c_\nu
 =\mathrm l_\mu+d_\mu+c_\mu\mathrm l_\nu+c_\mu d_\nu.
\]
Hence,
\[
 P_R\lambda_g=B_R(g,\cdot)+G_R(g,\cdot),
\]
where
\begin{align*}
 B_R&=T_R(c,c),\\
 G_R&=T_R(\mathrm l,1)+T_R(d,1)+T_R(c,\mathrm l)+T_R(c,d).
\end{align*}
In every term of $G_R$, one factor is $\mathrm l$ or $d$ and has a
circular estimate; the other is $1$ or $c$ and has an annular
estimate. Equations \eqref{imp:light}, \eqref{imp:ann},
\eqref{imp:base-residual}, and \eqref{imp:rotation-identity} therefore
show that
\begin{equation}\label{imp:output-bounds}
 \begin{split}
 \|B_R\|_1&\lesssim w^{\alpha_c-2\beta_c}L_w^6,\\
 \|G_R\|_2^2&\lesssim R^{2-t}\left[
 R^{\varepsilon_{\rm dec}}w^{-2\beta_c}L_w^8A_{\rm l}(h)
 +w^{-\beta_c-(2\varepsilon_F+t\alpha_c)/(2-t)}
 L_w^{(4-t)/(2-t)+4}A_{\rm d}(h)\right].
 \end{split}
\end{equation}
The first estimate is \eqref{imp:cc}.

For completeness, the two terms in the second estimate, with their
logarithms omitted, are $R^{-e_{\rm light}}$ and
$R^{-e_{\rm heavy}}$, where
\begin{align*}
 e_{\rm light}
 &=\frac{4t-5-b_0}{3}
   -\frac{2\delta_{\rm freq}}3(1-b_0)
   -\varepsilon_{\rm dec}-(1-2\delta_{\rm freq})\beta_c,\\
 e_{\rm heavy}
 &=\frac{2t-3+(p_t-1)b_0}{2}
   -\delta_{\rm freq}\bigl(1+(p_t-1)b_0\bigr)
   -\left(\frac12-\delta_{\rm freq}\right)
       \left(\beta_c+\frac{2\varepsilon_F+t\alpha_c}{2-t}\right).
\end{align*}
Our choices give $e_{\rm light},e_{\rm heavy}\ge3\Delta/4$.
For example, the loss in the first line is at most
$\Delta/16+\Delta/16+\Delta/256<\Delta/4$; that in the
second is at most
$\Delta/16+(\Delta/256+\Delta/64)/2<\Delta/4$.
The fixed logarithmic powers are at most $R^{\Delta/4}$ for large $R$.
Also, $\alpha_c-2\beta_c=\alpha_c/2$ and
$1/2-\delta_{\rm freq}>1/4$. Hence, after absorbing $L_w^6$,
\begin{equation}\label{imp:base-decay}
 \|B_R\|_1\lesssim R^{-\alpha_c/16},\qquad
 \|G_R\|_2^2\lesssim R^{-\Delta/2}.
\end{equation}
Thus, both estimates decay with fixed positive powers of $R$.

The measures $\lambda_g=(g_\#\mu)*\widetilde\nu$ are supported in
one fixed ball. Let $V$ be a larger ball whose boundary has positive
distance from that ball. The Schwartz kernel of the fixed annular
multiplier gives
\[
 \|P_R\lambda_g\|_{L^1(\R^2\setminus V)}=O_N(R^{-N}),
\]
uniformly in $g$, for every $N>0$. On $V$, the decomposition
and Cauchy--Schwarz give
\begin{equation}\label{imp:annular-L1-decay}
 \int_{\OO}\|P_R\lambda_g\|_1\,dm(g)
 \le\|B_R\|_1+|V|^{1/2}\|G_R\|_2+O_N(R^{-N})
 \lesssim R^{-\zeta_{\rm dec}}
\end{equation}
for some $\zeta_{{\rm dec}}>0$. This argument requires no support condition on
$B_R$ or $G_R$.

The right side of \eqref{imp:annular-L1-decay} is summable over
dyadic $R$. By Tonelli's theorem,
\[
 \sum_R\|P_R\lambda_g\|_{1}<\infty
\]
for almost all $g$. Hence, $\sum\limits_RP_R\lambda_g$ converges in $L^1$ for these $g$. The low-frequency part in \eqref{eq:dyadic-resolution} is also in $L^1$, since it is the
convolution of a finite measure with a Schwartz function. The
resolution of the identity identifies the resulting sum with
$\lambda_g$ as a distribution. Hence, $\lambda_g$ has an $L^1$
density for almost all $g$. The rotation identity holds on both
components of $\OO$.

For Borel sets $A,B$, we choose
\[
 \sbase<t<\min\{4/3,\dimH A,\dimH B\}.
\]
Frostman's lemma gives $t$-Frostman probabilities on compact subsets
of $A$ and $B$. For almost all $g$, their convolution is an
absolutely continuous probability supported on a compact subset
of $gA-B$. Thus, $\LL^2(gA-B)>0$. The difference set is analytic
and hence Lebesgue measurable. This completes the proof.
\end{proof}

The decay in \eqref{imp:annular-L1-decay} also gives higher
integrability of the densities. It controls the measure of the
rotated difference sets as follows.

\begin{corollary}\label{imp:integrability}
Under the measure hypotheses of
Theorem~\ref{thm:breaking-four-thirds}, there is $\eta_{\rm int}>0$ such that
the densities $f_g=d\lambda_g/d\LL^2$ satisfy
\[
 \int_{\OO}\int_{\R^2}f_g(z)^{1+\eta_{\rm int}}\,dz\,dm(g)<\infty.
\]
If $A',B'$ are compact sets supporting these probabilities, then
\[
 \int_{\OO}\LL^2(gA'-B')^{-\eta_{\rm int}}\,dm(g)<\infty.
\]
\end{corollary}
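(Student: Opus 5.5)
We interpolate between the averaged $L^1$ decay \eqref{imp:base-decay} of the dyadic pieces and a crude $L^\infty$ growth bound. The aim is a summable bound for $\int_{\OO}\|P_R\lambda_g\|_{1+\eta}^{1+\eta}\,dm(g)$ over dyadic $R$.

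\emph{The two inputs.} Each $P_R\lambda_g$ is the convolution of a probability measure with the kernel $R^2\check\phi(R\,\cdot)$. Hence
\[
 \|P_R\lambda_g\|_\infty\lesssim R^2
\]
uniformly in $g$. On the fixed ball $V$, write $P_R\lambda_g=B_R+G_R+O(R^{-N})$ as in the proof of Theorem~\ref{thm:breaking-four-thirds}. We have $\|G_R\|_{L^2(dm\,dz)}^2\lesssim R^{-\Delta/2}$ and $\|B_R\|_{L^1(dm\,dz)}\lesssim R^{-\alpha_c/16}$. We also need an $L^\infty$ bound for $B_R$ itself. Since $B_R=T_R(c,c)$ is the convolution of $c$-restricted pieces with kernels of $L^1$ norm $O(L_w^2)$ from Lemma~\ref{lem:sector}, and each kernel has sup norm $\lesssim R^2$ per pair of intervals, we get
\[
 \|B_R\|_\infty\lesssim K^2R^2L_w^2\lesssim R^{2+\beta_c}L_w^6 .
\]
Alternatively, we can avoid this by interpolating $P_R\lambda_g$ directly: $\|P_R\lambda_g\|_{L^1(V)}$ has averaged decay $R^{-\zeta_{\rm dec}}$ by \eqref{imp:annular-L1-decay}, and $\|P_R\lambda_g\|_\infty\lesssim R^2$. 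I would use this simpler route.

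\emph{Interpolation.} By H\"older,
\[
 \|F\|_{1+\eta}^{1+\eta}\le\|F\|_\infty^{\eta}\|F\|_1 .
\]
Averaging over $g$ gives
\[
 \int\|P_R\lambda_g\|_{L^{1+\eta}(V)}^{1+\eta}\,dm\lesssim R^{2\eta-\zeta_{\rm dec}} .
\]
Off $V$ the contribution is $O(R^{-N})$. Choose $\eta_{\rm int}<\zeta_{\rm dec}/4$. Then
\[
 \int\|P_R\lambda_g\|_{1+\eta}\,dm\le\Bigl(\int\|P_R\lambda_g\|_{1+\eta}^{1+\eta}\,dm\Bigr)^{1/(1+\eta)}\lesssim R^{-\zeta_{\rm dec}/4},
\]
which is summable in $R$. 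To control the $(1+\eta)$-th power of the sum, use Minkowski in $L^{1+\eta}(dm\,dz)$:
\[
 \Bigl\|\sum_RP_R\lambda_g\Bigr\|_{L^{1+\eta}(dm\,dz)}\le\sum_R\bigl\|P_R\lambda_g\bigr\|_{L^{1+\eta}(dm\,dz)}\lesssim\sum_R R^{(2\eta-\zeta_{\rm dec})/(1+\eta)}<\infty .
\]
The low-frequency piece is a Schwartz convolution of a probability measure, so it lies uniformly in $L^{1+\eta}$. Since $f_g=\phi_0\text{-part}+\sum_RP_R\lambda_g$ almost surely, by the $L^1$ identification in the theorem's proof, this yields $\int\!\!\int f_g^{1+\eta_{\rm int}}<\infty$.

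\emph{Measure of the difference set.} The density $f_g$ is supported in $gA'-B'$ and has total mass one. H\"older gives
\[
 1=\int_{gA'-B'}f_g\le\LL^2(gA'-B')^{\eta/(1+\eta)}\|f_g\|_{1+\eta} .
\]
Hence
\[
 \LL^2(gA'-B')^{-\eta}\le\|f_g\|_{1+\eta}^{1+\eta},
\]
and integrating in $g$ gives the second claim with the same $\eta_{\rm int}$.

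\emph{Main obstacle.} The only delicate point is ensuring the $L^\infty$ growth is polynomial in $R$ with exponent independent of the parameter choices, so that a positive $\eta_{\rm int}$ exists. The bound $R^2$ is uniform, so this works. Measurability of $(g,z)\mapsto P_R\lambda_g(z)$, which is continuous, handles Tonelli.
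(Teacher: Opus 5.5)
Your proof is correct and follows the paper's argument. You interpolate the averaged $L^1$ decay \eqref{imp:annular-L1-decay} against a uniform $L^\infty$ bound, sum the dyadic pieces by Minkowski in $L^{1+\eta}(dm\,dz)$, and finish with H\"older on $gA'-B'$; the only difference is that you use the crude bound $\|P_R\lambda_g\|_\infty\lesssim R^2$ from total mass one instead of the paper's Frostman bound $R^{2-t}$, which merely shrinks the admissible range from $\eta_{\rm int}<\zeta_{\rm dec}/(2-t)$ to $\eta_{\rm int}<\zeta_{\rm dec}/2$.
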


\begin{proof}
We interpolate the averaged $L^1$ estimate with the $L^\infty$
bound supplied by the Frostman condition. H\"older's inequality
will then give the assertion about the difference sets.
Let $t\in(\sbase,4/3)$ be the exponent chosen in the preceding proof.
Since $\nu$ is a probability, the Frostman bound for $\mu$ gives
\[
 \lambda_{g}(B(z,r))
 =\int\mu\bigl(B(g^{-1}(z+y),r)\bigr)\,d\nu(y)
 \le C_{\mu} r^{t}
\]
for $g\in\OO,z\in\R^{2},0<r\le1$.
The annular Schwartz kernel then gives
\[
 \|P_R\lambda_{g}\|_{L^\infty(dm(g)\,dz)}\lesssim R^{2-t}.
\]
Interpolating this with \eqref{imp:annular-L1-decay} yields
\[
 \|P_R\lambda_{g}\|_{L^{1+\eta_{\rm int}}(dm(g)\,dz)}
 \lesssim R^{[-\zeta_{\rm dec}+(2-t)\eta_{\rm int}]/(1+\eta_{\rm int})}.
\]
We choose $0<\eta_{\rm int}<\zeta_{\rm dec}/(2-t)$. Then the dyadic series
converges in $L^{1+\eta_{\rm int}}(dm(g)\,dz)$, and its sum, including the smooth
low-frequency term, is the density already identified in $L^1$.
This proves the first claim.

For almost all $g$, H\"older's inequality on $gA'-B'$ gives
\[
 1\le\LL^2(gA'-B')^{\eta_{\rm int}/(1+\eta_{\rm int})}\|f_g\|_{1+\eta_{\rm int}}.
\]
Raising to the power $1+\eta_{\rm int}$, we obtain
\[
 \LL^2(gA'-B')^{-\eta_{\rm int}}
 \le\|f_g\|_{1+\eta_{\rm int}}^{1+\eta_{\rm int}}
 =\int_{\R^{2}}f_{g}(z)^{1+\eta_{\rm int}}\,dz.
\]
The second claim follows by integration in $g$. This completes the proof.
\end{proof}

The same argument allows different Frostman exponents for the two
measures. The two mass thresholds can then be chosen independently.

\begin{corollary}\label{cor:mixed-exponents}
Let \(1<\alpha,\beta<2\), and suppose that
\begin{equation}\label{eq:mixed-region}
 \alpha^2+\alpha\beta-2\alpha+4\beta-6>0,
 \qquad
 \beta^2+\alpha\beta-2\beta+4\alpha-6>0.
\end{equation}
If \(\mu,\nu\) are compactly supported probability measures with
Frostman exponents \(\alpha,\beta\), respectively, then
\((g_\#\mu)*\widetilde\nu\ll\LL^2\) for almost all \(g\in\OO\).
Hence, for Borel sets \(A,B\) with dimensions \(a,b>1\),
\(\LL^2(gA-B)>0\) for almost all \(g\) if
\begin{equation}\label{eq:mixed-set-region}
 a^2+ab-2a+4b-6>0,
 \qquad b^2+ab-2b+4a-6>0.
\end{equation}
\end{corollary}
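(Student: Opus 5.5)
The plan is to rerun the proof of Theorem~\ref{thm:breaking-four-thirds} without the reduction to a common exponent. We build the angular decomposition of Section~\ref{sec:angular-decomposition} separately for $\mu$ with exponent $\alpha$ and for $\nu$ with exponent $\beta$. Each measure gets its own mass threshold, $h_\mu=w^{1-b_\mu}$ and $h_\nu=w^{1-b_\nu}$, with $0<b_\mu,b_\nu<1$, while $w=R^{-1/2+\delta_{\rm freq}}$ is shared. Lemmas~\ref{imp:terminal}, \ref{imp:terminal-tail}, \ref{imp:fixed-source-bounds} and \ref{imp:layer} involve only one measure at a time, so they apply verbatim with $t$ replaced by $\alpha$ (for $\mu$) or $\beta$ (for $\nu$). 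Their hypothesis $1<t<2$ holds by assumption. The clustered estimate \eqref{imp:cc} does not see the Frostman exponents at all. It only uses the counts $K_\mu,K_\nu\lesssim w^{-\beta_c}L_w^2$ and the interval lengths $O(w^{\alpha_c})$, through \eqref{imp:cluster-general}. Hence $\|B_R\|_1$ still decays like a power of $R$.

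We use the same splitting $P_R\lambda_g=B_R+G_R$, with $G_R=T_R(\mathrm l,1)+T_R(d,1)+T_R(c,\mathrm l)+T_R(c,d)$. By \eqref{imp:rotation-identity}, each term of $G_R$ pairs a circular estimate for one measure with an annular estimate \eqref{imp:ann} for the other. The first two terms pair a circular bound for $\mu$ with $R^{2-\beta}$. The last two pair $R^{2-\alpha}$ (from $c_\mu$) with a circular bound for $\nu$. Ignoring losses, the four quantities are
\[
R^{2-\beta}R^{-\alpha/3}h_\mu^{2/3},\quad R^{2-\beta}w\,(h_\mu/w)^{1-p_\alpha},\quad
R^{2-\alpha}R^{-\beta/3}h_\nu^{2/3},\quad R^{2-\alpha}w\,(h_\nu/w)^{1-p_\beta}.
\]
For the first pair, the light term decays iff $b_\mu<\alpha+3\beta-5$. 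Since $p_\alpha-1=\alpha/(2-\alpha)$, the heavy term decays iff $b_\mu>(3-2\beta)(2-\alpha)/\alpha$. An admissible $b_\mu$ exists iff
\[
\alpha(\alpha+3\beta-5)>(3-2\beta)(2-\alpha),
\]
which expands to $\alpha^2+\alpha\beta-2\alpha+4\beta-6>0$. By symmetry, $b_\nu$ exists iff the second inequality in \eqref{eq:mixed-region} holds. Two constraints on the choice remain. First, $b_\mu$ must lie in $(0,1)$. Either the lower endpoint is nonpositive, and we take $b_\mu$ small and positive, or it is positive and less than $1$ when $\beta>1$. The upper endpoint is positive, because it exceeds the lower endpoint, which is nonnegative when $\beta\le 3/2$; when $\beta>3/2$ we have $\alpha+3\beta-5>1/2$ directly. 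Truncating at $1$ leaves a nonempty open interval. Second, $q=(4C_*+2)h\ge w$ must hold, which is automatic for large $R$.

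Next we choose the margin $\Delta$ as the minimum of the four decay exponents. Then we choose $\varepsilon_{\rm dec},\alpha_c,\beta_c,\varepsilon_F,\delta_{\rm freq}$ in terms of $\Delta$ exactly as in the proof of Theorem~\ref{thm:breaking-four-thirds}. Wherever $t$ appears in a loss, we use $\max\{\alpha,\beta\}$ or $\min\{\alpha,\beta\}$ as appropriate, so that all losses total less than $\Delta/4$. This gives \eqref{imp:base-decay}. The remaining steps are unchanged: the bound \eqref{imp:annular-L1-decay}, Tonelli and summability in dyadic $R$, and identification of an $L^1$ density. For the set statement, pick $\alpha<a$ and $\beta<b$ close enough to $a,b$, with $\alpha,\beta>1$, that \eqref{eq:mixed-region} still holds; this is possible because the region is open. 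We may also lower exponents above $2$, though $a,b\le2$ anyway. Frostman's lemma then concludes as before.

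The main obstacle is bookkeeping rather than a new idea: checking that no earlier lemma tacitly used equality of the two exponents. The place to watch is the Plancherel pairing \eqref{imp:rotation-identity}. There the annular factor must carry the exponent of the other measure, and the $w^{-\beta_c}$ loss in \eqref{imp:ann} for $b=c$ must be absorbed by the margin. Choosing $\beta_c\le\Delta/256$ handles this, as before.
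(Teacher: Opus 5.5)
Your proposal is correct and follows the paper's own proof. It uses separate angular decompositions with independent thresholds \(h_\mu=w^{1-b_\mu}\), \(h_\nu=w^{1-b_\nu}\), the same four light/heavy--annular pairings leading to the interval conditions on \(b_\mu,b_\nu\) (including the check that they meet \((0,1)\)), auxiliary losses taken small relative to the minimum of the four margins, the exponent-free clustered \(L^1\) bound, and Frostman's lemma after slightly lowering \(a,b\); the only detail worth stating explicitly is that a single \(\delta_{\rm freq}\) must be chosen small enough for both light estimates, with \(t=\alpha\) and \(t=\beta\), as the paper does.
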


\begin{proof}
We choose the two mass thresholds so that all four $L^2$ estimates
decay. This gives the two inequalities in \eqref{eq:mixed-region}.
For the circular estimates of the light and remaining heavy parts
of \(\mu\), we use \(t=\alpha\) in Section~\ref{sec:angular-decomposition}
and pair these bounds with the annular estimate \(R^{2-\beta}\) for
\(\nu\). For the other two estimates, we interchange the measures
and use \(t=\beta\). We apply the angular construction separately
with the two Frostman exponents and thresholds
\[
 h_\mu=w^{1-b_\mu},\qquad h_\nu=w^{1-b_\nu},
 \qquad 0<b_\mu,b_\nu<1.
\]
The annular estimates for \(\mu\) and \(\nu\) are
\(R^{2-\alpha}\) and \(R^{2-\beta}\), respectively, with the
explicit additional losses for the clustered symbols as in
\eqref{imp:ann}. The circular estimates for the light and remaining
heavy parts are those of Section~\ref{sec:angular-decomposition},
with the corresponding exponent and threshold. Setting
\(w=R^{-1/2}\) and omitting the auxiliary losses, the four powers in the squared \(L^2\) estimates
for the non-clustered products are
\begin{align*}
 &\frac{2\alpha+6\beta-10-2b_\mu}{3},
 &&2\beta-3+\frac{\alpha}{2-\alpha}b_\mu,\\
 &\frac{2\beta+6\alpha-10-2b_\nu}{3},
 &&2\alpha-3+\frac{\beta}{2-\beta}b_\nu.
\end{align*}
For example, the first row comes from
\[
 R^{2-\beta}R^{-\alpha/3}h_\mu^{2/3},\qquad
 R^{2-\beta}h_\mu(h_\mu/w)^{-2/(2-\alpha)}.
\]
The first two powers are positive if and only if
\begin{equation}\label{eq:mixed-b-choice}
 \max\left\{0,\frac{(3-2\beta)(2-\alpha)}{\alpha}\right\}
 <b_\mu<\min\{1,\alpha+3\beta-5\}.
\end{equation}
The unrestricted lower endpoint is less than \(1\) for
\(1<\alpha,\beta<2\). If \(\beta<3/2\), it is positive;
if \(\beta\ge3/2\), the unrestricted upper endpoint is positive.
Thus, the interval in \eqref{eq:mixed-b-choice} is nonempty exactly
when
\[
 \alpha+3\beta-5
 -\frac{(3-2\beta)(2-\alpha)}{\alpha}
 =\frac{\alpha^2+\alpha\beta-2\alpha+4\beta-6}{\alpha}>0.
\]
The other two powers give the symmetric condition.

Fix \(b_\mu,b_\nu\) with strict margins in these intervals. There
are only four positive decay margins. Choose the two analytic losses,
the two Furstenberg losses and common angular parameters sufficiently
small compared with their minimum, with
\(0<2\beta_c<\alpha_c\). Then choose one positive
\(\delta_{\rm freq}\) small enough for both light estimates and
all four powers. The explicit losses in \eqref{imp:light},
\eqref{imp:ann} and \eqref{imp:base-residual} tend to zero with
these parameters. Their logarithmic factors are absorbed by the
remaining margins. Lemma~\ref{imp:cluster-pair} applies with the
two interval families separately, so its \(L^1\) estimate is
unchanged. The decomposition and summation in the proof of
Theorem~\ref{thm:breaking-four-thirds} therefore give absolute
continuity.

Finally, if \(a,b>1\) satisfy \eqref{eq:mixed-set-region}, choose
\(1<\alpha<a\), \(1<\beta<b\), both less than \(2\), sufficiently
close to \(a,b\) that \eqref{eq:mixed-region} still holds.
Frostman's lemma and the measure conclusion prove the assertion
for sets. This completes the proof.
\end{proof}

On the diagonal, \eqref{eq:mixed-set-region} reduces to
\(a^2+a-3>0\), as in Theorem~\ref{thm:breaking-four-thirds}.
The asymmetric region also contains \((a,b)=(1.29,1.34)\): the
left sides in \eqref{eq:mixed-set-region} are \(0.1727\) and
\(0.0042\), whereas neither inequality in
\eqref{eq:previous-area-region} holds.
Conversely, \((a,b)=(1.5,1.2)\) satisfies
\eqref{eq:previous-area-region}, since \(a+b/2=2.1>2\), but fails
the first inequality in \eqref{eq:mixed-set-region}, whose left-hand
side is \(-0.15\).

\FloatBarrier
\section{Sharpness}
\label{sec:sharpness}

The dimension bound in Theorem~\ref{thm:full-dimension} is sharp for
every pair of dimensions. For nonempty Borel sets $A,B\subset\R^2$,
the map $(a,b)\mapsto ga-b$ is Lipschitz. The standard product inequality
for Hausdorff and packing dimensions, see \cite{Mattila},
gives, for every $g\in\OO$,
\[
 \dimH(gA-B)
 \le\min\{2,\dimH(A\times B)\}
 \le\min\{2,\dimH A+\dimP B\}.
\]

For every $0<c<2$, let $K_c\subset[0,1]$ be the two-interval Cantor set
with contraction ratio $4^{-1/c}<1/2$. The four planar similitudes
defining $K_c\times K_c$ satisfy the open set condition, so this compact
set has Hausdorff and packing dimensions both equal to $c$.
For $c=0$ we use a point, and for $c=2$ we use the unit square.

Thus, for any $a,b\in[0,2]$ there are nonempty compact sets $A,B\subset\R^2$
with
\[
 \dimH A=\dimP A=a,\qquad \dimH B=\dimP B=b.
\]
For these sets the upper bound and Theorem~\ref{thm:full-dimension} give
\[
 \dimH(gA-B)=\min\{a+b,2\}
 \quad\text{for almost all }g\in\OO.
\]
In particular, the case $a+b=2$ also gives full Hausdorff dimension.
This sharpness statement concerns dimension and makes no claim about
positive Lebesgue measure at the endpoint. This proves the asserted sharpness.

\begin{remark}
The threshold in Theorem~\ref{thm:breaking-four-thirds} comes from
balancing the light and heavy estimates. One possible approach to
improving it is to localize the light estimate to shorter angular
intervals and to use more information about the distribution of the
remaining heavy directions. This would require estimates with a gain
that remains after the summation over intervals and mass thresholds. However the improvement may be very small. 

\end{remark}

\section{Intersections}
\label{sec:intersections}

The estimates used to prove Theorem~\ref{thm:breaking-four-thirds} also
give lower bounds for the dimensions of intersections. The passage from
an averaged $L^p$ estimate to an intersection theorem follows the
argument of Mattila \cite[Theorem~3.1]{Mattila2024}. For this application,
the estimate must be uniform for restrictions and rescalings of the
measures and must retain both total masses. We establish this form of
the estimate below and give the arguments in detail.
We use $T_g$ and $\lambda_g$ as in \eqref{eq:lambda-definition}.

\begin{theorem}\label{int:intersection-theorem}
Let $\alpha,\beta\in(\sbase,2]$, and let $\mu,\nu$ be compactly supported
Frostman probability measures on $\R^2$ with exponents $\alpha$ and
$\beta$, respectively. For almost all $g\in\OO$, the measure
$\lambda_g=(g_\#\mu)*\widetilde\nu$ is absolutely continuous and has
the following property. If $A,B\subset\R^2$ are Borel sets with
$\mu(A)=\nu(B)=1$, then
\begin{equation}\label{int:intersection-lower-bound}
 \dimH\bigl(gA\cap(B+z)\bigr)\ge\alpha+\beta-2
 \qquad\text{for $\lambda_g$-almost all }z.
\end{equation}
In particular, the translations satisfying
\eqref{int:intersection-lower-bound} contain a Lebesgue measurable set
of positive measure.
\end{theorem}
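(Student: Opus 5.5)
Following Mattila's argument \cite[Theorem~3.1]{Mattila2024}, I would disintegrate the product measure along the fibres of $T_g$ and bound the energy of the sliced measures. For fixed $g$ and $z$, the fibre $T_g^{-1}(z)=\{(x,y):gx-y=z\}$ is parametrized by $y\in g A\cap(B+z)$ shifted by $z$ (namely $y=gx-z$). Disintegrating $\mu\otimes\nu$ with respect to $T_g$ gives sliced measures $\sigma_{g,z}$, which for $\lambda_g$-almost all $z$ are probability measures carried by $\{(x,gx-z)\}$ with $x\in A$ and $gx-z\in B$. Projecting to the second coordinate gives a measure $\kappa_{g,z}$ on $gA\cap(B+z)$. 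Hence it suffices to show that, for almost all $g$ and $\lambda_g$-almost all $z$, the energy $I_\gamma(\kappa_{g,z})$ is finite for every $\gamma<\alpha+\beta-2$. The full-measure hypothesis $\mu(A)=\nu(B)=1$ ensures that $\kappa_{g,z}$ lives on the intersection.

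To bound the energies, I would use the standard slicing identity
\[
 \int I_\gamma(\kappa_{g,z})\,f_g(z)\,dz \;\lesssim\; \sum_{k}2^{k\gamma}\,\iint_{|y-y'|\sim 2^{-k}} \frac{d\lambda_g^{(y,y')}}{dz}\ldots
\]
Concretely, I would localize $\mu\otimes\nu$ to pairs of dyadic squares $Q\times Q'$ at scale $2^{-k}$. The localized pieces $\mu_Q=\mu|_Q$ and $\nu_{Q'}=\nu|_{Q'}$, rescaled by $2^k$, are again Frostman with exponents $\alpha,\beta$ and the same constants, after normalizing by factors $\mu(Q)\lesssim2^{-k\alpha}$ and $\nu(Q')\lesssim2^{-k\beta}$. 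Thus the key ingredient is a \emph{uniform, mass-retaining} form of Theorem~\ref{thm:breaking-four-thirds}: for positive measures $\mu_0\le\mu$ and $\nu_0\le\nu$ that are Frostman with the original constants,
\[
 \int_{\OO}\|(g_\#\mu_0)*\widetilde\nu_0\|_{L^{p}}^{p}\,dm(g)\lesssim \mu_0(\R^2)^{a}\nu_0(\R^2)^{b},
\]
with $p=1+\eta_{\rm int}$ and suitable exponents $a,b$, with constants independent of the restriction.

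I would obtain this by rereading Sections~\ref{sec:angular-decomposition}--\ref{sec:proof-subcritical}, which are already stated for restrictions without renormalization. The estimates \eqref{imp:light}, \eqref{imp:ann}, \eqref{imp:base-residual} and \eqref{imp:cc} carry mass factors. Lemma~\ref{lem:frostman-annular} and \eqref{imp:positive-source-light} are linear in mass, the $L^1$ bound in Lemma~\ref{imp:cluster-pair} is multiplied by the product of the masses, and the interpolation in Corollary~\ref{imp:integrability} uses $\lambda_g(B(z,r))\le C r^{\alpha}\nu_0(\R^2)$. Rescaling a square of side $2^{-k}$ to unit size then turns the local $L^p$ density bound into a factor of $2^{k(2-\alpha-\beta)}$ up to $\varepsilon$-losses. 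Summing over $k$ and over pairs of squares, using $\sum_{Q}\mu(Q)\nu(\cdot)\le1$, yields $\int\!\!\int I_\gamma(\kappa_{g,z})\,d\lambda_g(z)\,dm(g)<\infty$ for $\gamma<\alpha+\beta-2$. A countable sequence $\gamma_n\uparrow\alpha+\beta-2$ then gives a single null set of $g$. For $\alpha$ or $\beta$ close to $2$, I would reduce the exponents to values in $(\sbase,4/3)$ only in the absolute-continuity input and keep the true exponents in the mass factors.

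The main obstacle is this uniform, mass-retaining version, in particular the heavy/light thresholds. In the decomposition the threshold $h$ is absolute, while the bins of a rescaled restriction have total mass $m=\mu_0(\R^2)$. I would try to scale the thresholds as $h\,m$ and check that the Furstenberg tail in Lemma~\ref{imp:terminal-tail} acquires the factor $m$ through $\omega(E_\lambda)\le m$ in Lemma~\ref{lem:dual}. This should produce an estimate with total power of the masses at least one in each factor, which is what the summation requires. Finally, once the energies are finite, positive Lebesgue measure of the set of good $z$ follows from the absolute continuity of $\lambda_g$, and measurability follows from the Borel dependence of the slices.
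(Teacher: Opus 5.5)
Your key ingredient, a uniform $L^{1+\eta_{\rm int}}$ bound for restricted and rescaled pieces that retains the masses, matches the paper's Proposition~\ref{int:mass-integrability}, with both mass powers equal to one. That linear factor comes exactly from $\omega(E_\lambda)\le\omega(\R^2)$ in Lemma~\ref{lem:dual}, with the thresholds kept absolute. Rescaling the thresholds to $hm$, as you suggest, would instead give the heavy term a negative mass power $m^{2-p_t}$, since $p_t>2$.

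The genuine gap is your final step, turning this bound into finite energies of the slices. Write $f_g^D$ for the density of $(T_g)_\#((\mu\otimes\nu)|_D)$, where $D=Q\times Q'$ has scale $r=2^{-k}$. Rescaling and summing over pairs gives only
\[
 \sum_{D}\int_{\OO}\int_{\R^2}\bigl(f_g^{D}\bigr)^{1+\eta_{\rm int}}\,dz\,dm(g)
 \lesssim r^{(\alpha+\beta-2)(1+\eta_{\rm int})+2-\alpha-\beta}
 =2^{-k(\alpha+\beta-2)\eta_{\rm int}},
\]
not a gain of $2^{-k(\alpha+\beta-2)}$; that full gain is the $L^2$ scaling. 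Energies are governed by second moments: $I_\gamma(\tau)<\infty$ needs roughly $\sum_k2^{k\gamma}\sum_D\tau(D)^2<\infty$. The classical slicing identity bounds the integrated second moments by $\sum_D\|f_g^D\|_2^2$. With normalized slices integrated against $\lambda_g$, as you set it up, you even get $\int(f_g^D)^2/f_g\,dz$, which has no useful bound in terms of norms of $f_g^D$. Below $4/3$ no $L^2$ estimate exists: the clustered part $B_R$ is controlled only in $L^1$, which is why the densities lie only in $L^{1+\eta_{\rm int}}$. A bound on $\sum_D\tau(D)^{1+\eta_{\rm int}}$ does not give the decay of $\sum_D\tau(D)^2$ needed for $I_\gamma$. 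So $\iint I_\gamma(\kappa_{g,z})\,d\lambda_g\,dm<\infty$ does not follow, and your displayed slicing identity cannot be completed.

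The paper avoids energies altogether. With $q=1+\eta_{\rm int}$, it sets $S_k(g,z)=\sum_D(f_g^D/f_g)^q$, which equals $\sum_D\sigma_{g,z}(D)^q$ for $\lambda_g$-almost all $z$. Then $\iint S_kf_g^q\,dz\,dm$ is exactly the sum displayed above. Hence $\sum_k2^{k\gamma\eta_{\rm int}}S_k<\infty$ almost everywhere for $f_g^q\,dz$, and so for $\lambda_g$, since the two measures have the same null sets on $\{0<f_g<\infty\}$. This is a qualitative statement, not an integrated bound against $\lambda_g$. For $\tau=\sigma_{g,z}$, Chebyshev gives $\tau\{p:\tau(D_k(p))>2^{-k\gamma}\}\le2^{k\gamma\eta_{\rm int}}S_k$. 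The small factor $\eta_{\rm int}$ appears on both sides and cancels, which is why the weak gain suffices. Borel--Cantelli then gives $\tau(D_k(p))\le2^{-k\gamma}$ for all large $k$ at $\tau$-almost every $p$. The mass distribution principle, on a subset with a uniform $k_0$, gives $\dimH F\ge\gamma$ whenever $\tau(F)>0$. With this replacing the energy step, and absolute thresholds, the rest of your outline goes through.
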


We first prove the required uniform estimate for finite measures.

\begin{proposition}\label{int:mass-integrability}
Let $\sbase<t<4/3$ and $C_0\ge1$. There are
$\eta_{\rm int}>0$ and $C<\infty$, depending only on $t$ and $C_0$,
with the following property. Let $\mu',\nu'$ be finite positive Borel
measures supported in $B(0,1)$ such that
\[
 \mu'(B(x,r))\le C_0r^t,
 \qquad \nu'(B(x,r))\le C_0r^t
 \quad(x\in\R^2,\ r>0).
\]
Then $\lambda'_g=(g_\#\mu')*\widetilde{\nu'}$ is absolutely continuous
for almost all $g\in\OO$. Its densities $f'_g$ satisfy
\begin{equation}\label{int:mass-integrability-bound}
 \int_{\OO}\int_{\R^2}|f'_g(z)|^{1+\eta_{\rm int}}\dd z\dd m(g)
 \le C\mu'(\R^2)\nu'(\R^2).
\end{equation}
The densities can be chosen jointly measurable in $(g,z)$.
\end{proposition}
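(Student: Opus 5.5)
The plan is to rerun the proof of Theorem~\ref{thm:breaking-four-thirds} and Corollary~\ref{imp:integrability} for the finite measures \(\mu',\nu'\), keeping track of how every bound depends on the masses. The proposition needs two things: uniformity in \(\mu',\nu'\) given only \(t\) and \(C_0\), and a right side linear in each mass. Write \(m_1=\mu'(\R^2)\) and \(m_2=\nu'(\R^2)\). First I would fix all auxiliary parameters \(b_0,\alpha_c,\beta_c,\varepsilon_{\rm dec},\varepsilon_F,\delta_{\rm freq}\) exactly as in Section~\ref{sec:proof-subcritical}, depending only on \(t\). Then I would check that every constant in Section~\ref{sec:angular-decomposition} depends only on \(t\), these parameters and the Frostman constant \(C_0\). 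No normalization of mass is used there: the text repeatedly notes that restrictions keep the original Frostman constant, and \(m_1,m_2\le C_0\) because the supports lie in \(B(0,1)\). This reproduces \eqref{imp:base-decay}, but with constants uniform over the class. The remaining work is to see how each term scales with the masses.

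The mass dependence would be tracked as follows. The clustered term is handled by Lemma~\ref{imp:cluster-pair}, whose proof already notes that for finite measures the bound is multiplied by \(m_1m_2\), so \(\|B_R\|_1\lesssim m_1m_2R^{-\alpha_c/16}\). For \(G_R\), the annular estimates \eqref{imp:ann} and Lemma~\ref{lem:frostman-annular} are linear in the mass of the factor. The circular estimates are linear in mass for the light part, via \eqref{imp:positive-source-light}. For the heavy part they are only bounded through \(M(q)\). To recover linear dependence there I would use the crude bound \(S^d\lesssim w\sum a_{jk}^2\le w\cdot h_{\max}\cdot\sum a_{jk}\lesssim m\). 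Interpolating geometrically between this bound and the decaying bound gives something like \(m^{\theta}R^{-\Delta'}\) with \(\theta\) small. Alternatively, I would simply accept a bound \(\|G_R\|_2^2\lesssim R^{-\Delta/2}\min\{m_1,m_2\}\). This holds because each product in \(G_R\) has an annular factor linear in its own mass, while the other factor is bounded by \(C_0\) times a decay. The cleaner way to write this is \(\|G_R\|_2^2\lesssim R^{-\Delta/2}m_1m_2\cdot(m_1m_2)^{-1}\min\{\dots\}\), which is not yet linear in both masses.

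This is the main obstacle. The \(L^2\) pieces are quadratic in the measures, whereas the target is linear in \(m_1m_2\). My first attempt would avoid needing exact linearity in the \(L^2\) part by interpolating directly in \(L^{1+\eta}\). The exact identity is \(\int\!\int f'_g\,dz\,dm=m_1m_2\). The Frostman bound gives \(\|P_R\lambda'_g\|_\infty\lesssim C_0R^{2-t}\min\{m_1,m_2\}\), which is linear in each mass through \(\lambda'_g(B(z,r))\le C_0r^t m_2\). On \(V\), Cauchy--Schwarz gives \(|V|^{1/2}\|G_R\|_2\). For that to be linear in the masses I would prove \(\|G_R\|_2^2\lesssim R^{-\Delta/2}m_1m_2\). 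I expect this to hold: in \eqref{imp:rotation-identity}, each product of a circular factor with an annular factor has the circular estimate linear in its own mass, once \eqref{imp:base-residual} is rewritten via \(M(q)\le m\cdot(\text{decay})^{\theta}\). In other words, I would interpolate the tail bound of Lemma~\ref{imp:terminal-tail} with the trivial bound \(M(q)\le m\), sacrificing part of \(\Delta\). With these bounds, \(\int\|P_R\lambda'_g\|_1\,dm\lesssim R^{-\zeta}(m_1m_2+(m_1m_2)^{1/2})\). For the pure \((m_1m_2)^{1/2}\) regime, i.e.\ small masses, I would instead use Hölder directly on \(V\): \(\int_V|G_R|^{1+\eta}\le\|G_R\|_2^{1+\eta}|V|^{(1-\eta)/2}\). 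Since \(1+\eta<2\), this is still sublinear, so I would instead interpolate between \(L^2\) (quadratic) and \(L^\infty\) (linear in \(\min\)) to reach \(L^{1+\eta}\) with a factor \(\lesssim m_1m_2\) whenever \(\eta\) is small. I would check this Hölder bookkeeping carefully, since it is the only delicate point. Finally, joint measurability of \(f'_g\) follows by taking the \(L^{1+\eta}(dm\,dz)\) limit of the jointly continuous functions \(\phi_0\)-part plus \(\sum_R P_R\lambda'_g\). Absolute continuity for almost all \(g\) then follows from finiteness of \eqref{int:mass-integrability-bound} by Fubini.
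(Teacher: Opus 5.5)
Your overall plan is the paper's: fix the parameters of Section~\ref{sec:proof-subcritical}, keep the decomposition $P_R\lambda'_g=B_R+G_R$, and note that the clustered, light and annular bounds already carry one power of the relevant mass. The proposal breaks down at the two places where linearity in the masses has to be produced.

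The first is the heavy circular estimate. Interpolating the trivial bound $M(q)\lesssim m$ with the mass-free tail bound of Lemma~\ref{imp:terminal-tail} gives $M(q)\lesssim m^{1-\theta}(\cdot)^{\theta}$, not $m\,(\cdot)^{\theta}$. A factor $m^{1-\theta}$ is strictly weaker than linear for small masses, and small masses are exactly the regime in which the proposition is later applied to rescaled dyadic pieces. Your alternative $\|G_R\|_2^2\lesssim R^{-\Delta/2}\min\{m_1,m_2\}$ is also not what the untracked bounds give: $T_R(d,1)$ and $T_R(c,d)$ only yield factors $m_2$ and $m_1$, respectively. The missing observation is that the Furstenberg input is already mass-homogeneous. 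In the proof of Lemma~\ref{lem:dual}, before $\omega(E_\lambda)\le 1$ is used, one has $\omega(E_\lambda)\lambda\lesssim\omega(E_\lambda)^{(2-t)/(4-t)}w^{-(2\varepsilon+t\alpha_{\rm te})/(4-t)}|S|^{2/(4-t)}$; bound $\omega(E_\lambda)\le\omega(\R^2)$ there instead. In Lemma~\ref{imp:terminal-tail} you then divide by $M_{\rm ch}(q)^{2/(4-t)}$ and raise to the power $(4-t)/(2-t)$. This converts $\omega(\R^2)^{(2-t)/(4-t)}$ into exactly $\omega(\R^2)$. Hence $M(q)$, and by Lemma~\ref{imp:layer} also $S^d_\omega$, is linear in the mass with no loss in the decay exponent. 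Then \eqref{imp:rotation-identity} gives $\|G_R\|_2^2\lesssim R^{-\Delta/2}m_1m_2$.

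The second is the final bookkeeping, which does not close. Cauchy--Schwarz on $V$ produces $(m_1m_2)^{1/2}$, as you notice. Your remedy of interpolating between $L^2$ and $L^\infty$ cannot reach $L^{1+\eta}$, since $1+\eta<2$. The $L^2$ bound is also not the problem: $\|G_R\|_2^2\lesssim R^{-\Delta/2}m_1m_2$ already has the right homogeneity. What is missing is a non-decaying $L^1$ bound that is linear in the masses, namely $\|G_R\|_1\le\|P_R\lambda'\|_1+\|B_R\|_1\lesssim m_1m_2$, which follows from the fixed annular kernel. You then apply $L^1$--$L^2$ H\"older, $\|F\|_{1+\eta}^{1+\eta}\le\|F\|_1^{1-\eta}\|F\|_2^{2\eta}$, to $G_R$ and $B_R$ separately.
\begin{itemize}
\item For $G_R$ this gives $R^{-\Delta\eta/2}m_1m_2$.
\item For $B_R$, first note $\|P_R\lambda'\|_2^2\le\|P_R\lambda'\|_1\|P_R\lambda'\|_\infty\lesssim R^{2-t}m_1m_2$, so $\|B_R\|_2^2\lesssim\|P_R\lambda'\|_2^2+\|G_R\|_2^2\lesssim R^{2-t}m_1m_2$. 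Combined with $\|B_R\|_1\lesssim R^{-\alpha_c/16}m_1m_2$ and $\eta=\alpha_c/64$, this gives $R^{-\alpha_c/64}m_1m_2$.
\item The finitely many scales below the threshold are handled by the bound $\|P_R\lambda'\|_\infty^{\eta}\|P_R\lambda'\|_1$.
\end{itemize}
Minkowski's inequality then sums the series, and your joint-measurability and identification argument finishes the proof.
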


\begin{proof}
We retain the total masses in the estimates of
Sections~\ref{sec:analytic-estimates}--\ref{sec:angular-decomposition},
and then interpolate the two parts separately.
Set
\[
 m_\mu=\mu'(\R^2),\qquad m_\nu=\nu'(\R^2).
\]
All norms below are over $\OO\times\R^2$ with measure
$\dd m(g)\dd z$.

Choose the parameters in the proof of
Theorem~\ref{thm:breaking-four-thirds}, depending only on $t$. In
particular, $\Delta>0$ and $0<\alpha_c\le1/4$ are as in that proof.
We use $w,h,p_t,A_{\rm l},A_{\rm d}$ from
\eqref{imp:scale-parameters}. In the estimates below, let $\omega$ denote either $\mu'$ or $\nu'$.
In particular, for every measurable set $E$, we use the restriction
$\omega|_E$ without dividing by its mass $\omega(E)$.

First consider Lemma~\ref{lem:dual}. The proof before the use of
$\omega(E_\lambda)\le1$ gives
\[
 \omega(E_\lambda)\lambda
 \lesssim \omega(E_\lambda)^{(2-t)/(4-t)}
 w^{-(2\varepsilon+t\alpha_{\rm te})/(4-t)}|S|^{2/(4-t)}.
\]
Use $\omega(E_\lambda)\le\omega(\R^2)$ and sum over the
$O(L_w)$ classes. Thus, the stronger form of \eqref{eq:dual} is
\begin{equation}\label{int:dual-mass}
 \int|D_x|\dd\omega(x)
 \lesssim \omega(\R^2)^{(2-t)/(4-t)}L_w
 w^{-(2\varepsilon+t\alpha_{\rm te})/(4-t)}|S|^{2/(4-t)}.
\end{equation}
Its constant is uniform for a fixed Frostman upper bound. In
particular, Lemma~\ref{lem:KT-selection}, used in its proof, requires
no positive lower bound on $\omega(E_\lambda)$.

In the proof of Lemma~\ref{imp:terminal-tail}, apply
\eqref{int:dual-mass} to the shadings in one chart. For a dyadic
threshold $q=h_n$, their union has area at most
$C(w/q)M_{\rm ch}(q)$. Therefore,
\[
 M_{\rm ch}(q)\lesssim
 \omega(\R^2)^{(2-t)/(4-t)}L_w
 w^{-(2\varepsilon_F+t\alpha_c)/(4-t)}
 \bigl((w/q)M_{\rm ch}(q)\bigr)^{2/(4-t)}.
\]
If $M_{\rm ch}(q)>0$, divide by its power on the right and raise to
$(4-t)/(2-t)$. The resulting power of $\omega(\R^2)$ is one. Summing
over the fixed charts and using monotonicity between dyadic
thresholds gives
\begin{equation}\label{int:tail-mass}
 M(q)\lesssim \omega(\R^2)
 L_w^{(4-t)/(2-t)}w^{-(2\varepsilon_F+t\alpha_c)/(2-t)}
 (q/w)^{-p_t},\qquad h\le q\le1.
\end{equation}
The estimate is immediate for a chart with $M_{\rm ch}(q)=0$.
Since $S_\omega^{b_q}(r)\lesssim qM(q)$, the layer summation in
Lemma~\ref{imp:layer} now gives
\begin{equation}\label{int:heavy-mass}
 S_\omega^d(r)\lesssim \omega(\R^2)
 L_w^{(4-t)/(2-t)}w^{-(2\varepsilon_F+t\alpha_c)/(2-t)}A_{\rm d}(h).
\end{equation}
This holds uniformly for $r\sim R$.

The light and annular estimates also retain one power of the mass.
Indeed, Lemma~\ref{lem:light} is an operator estimate with a constant
uniform over measures satisfying the fixed Frostman upper bound.
Its errors have already been absorbed into that estimate. Applying
the adjoint to $1\in L^2(\omega_0)$ therefore gives exactly the factor
$\omega_0(\R^2)$ in \eqref{imp:positive-source-light}, with no additive
error independent of the mass. At each dyadic angular length, the
restrictions in Lemma~\ref{imp:fixed-source-bounds} satisfy
\[
 \sum_{I\text{ of this length}}\omega_I(\R^2)
 \le2(K+1)\omega(\R^2).
\]
The same sum applies to the clustered annular estimate, and the
mass factor in Lemma~\ref{lem:frostman-annular} is explicit. Hence,
\begin{align}
 S_\omega^{\mathrm l}(r)
 &\lesssim \omega(\R^2)
 R^{\varepsilon_{\rm dec}}w^{-\beta_c}L_w^4A_{\rm l}(h),
 \label{int:light-mass}\\
 \int_{R/2}^{2R}rS_\omega^b(r)\dd r
 &\lesssim \omega(\R^2)w^{-\beta_c}L_w^4R^{2-t},
 \qquad b=1,c.
 \label{int:annular-mass}
\end{align}
For $b=1$, the factor $w^{-\beta_c}L_w^4$ can be omitted.
The angular construction and its bounds are unchanged for masses at
most one: the thresholds still stop at $1$, and the number of
recorded intervals at each point has the same bound.

Use the decomposition
$P_R\lambda'_g=B_R(g,\cdot)+G_R(g,\cdot)$ from the proof of
Theorem~\ref{thm:breaking-four-thirds}. The proof of
Lemma~\ref{imp:cluster-pair} integrates its kernel bound against the
two source measures. Consequently, its right side is multiplied by
$m_\mu m_\nu$. In each of the four terms defining $G_R$,
\eqref{imp:rotation-identity} pairs one circular estimate with one
annular estimate. Equations
\eqref{int:heavy-mass}--\eqref{int:annular-mass} supply one mass factor
from each source. The same parameter calculation as in
\eqref{imp:base-decay} therefore gives
\begin{equation}\label{int:tracked-decay}
 \|B_R\|_1\lesssim R^{-\alpha_c/16}m_\mu m_\nu,
 \qquad
 \|G_R\|_2^2\lesssim R^{-\Delta/2}m_\mu m_\nu.
\end{equation}
These estimates hold for $R\ge R_0$, where $R_0$ and the implicit
constants depend only on $t$, the fixed Frostman bound, and the fixed
cutoffs. None depends on either total mass or the particular
measures. The packet and angular parameters, and the scale cutoffs
in the preceding lemmas, have precisely these dependencies.

The fixed annular kernel and the Frostman bound also give, for every
$R\ge2$,
\begin{equation}\label{int:trivial-bounds}
 \|P_R\lambda'\|_1\lesssim m_\mu m_\nu,
 \qquad
 \|P_R\lambda'\|_\infty\lesssim R^{2-t},
 \qquad
 \|P_R\lambda'\|_2^2\lesssim R^{2-t}m_\mu m_\nu.
\end{equation}
For the middle estimate, use
$\lambda'_g(B(z,r))\lesssim r^t\min\{m_\mu,m_\nu\}\le Cr^t$.
The last estimate follows by multiplying the first two.
Thus, for $R\ge R_0$,
\[
 \|G_R\|_1\le\|P_R\lambda'\|_1+\|B_R\|_1
 \lesssim m_\mu m_\nu,
 \qquad
 \|B_R\|_2^2\lesssim R^{2-t}m_\mu m_\nu.
\]

Set $\eta_{\rm int}=\alpha_c/64$. For $0<\eta_{\rm int}<1$,
H\"older's inequality gives
\[
 \|F\|_{1+\eta_{\rm int}}^{1+\eta_{\rm int}}
 \le\|F\|_1^{1-\eta_{\rm int}}\|F\|_2^{2\eta_{\rm int}}.
\]
Apply this separately to $G_R$ and $B_R$. From
\eqref{int:tracked-decay} and \eqref{int:trivial-bounds},
\begin{align*}
 \|G_R\|_{1+\eta_{\rm int}}^{1+\eta_{\rm int}}
 &\lesssim R^{-\Delta\eta_{\rm int}/2}m_\mu m_\nu,\\
 \|B_R\|_{1+\eta_{\rm int}}^{1+\eta_{\rm int}}
 &\lesssim
 R^{-(1-\eta_{\rm int})\alpha_c/16+(2-t)\eta_{\rm int}}
 m_\mu m_\nu
 \lesssim R^{-\alpha_c/64}m_\mu m_\nu.
\end{align*}
The last inequality uses $2-t<1$ and $\eta_{\rm int}<1/2$.
Both powers decay. For the finitely many scales $2\le R<R_0$,
\eqref{int:trivial-bounds} gives directly
\[
 \|P_R\lambda'\|_{1+\eta_{\rm int}}^{1+\eta_{\rm int}}
 \le\|P_R\lambda'\|_\infty^{\eta_{\rm int}}
       \|P_R\lambda'\|_1
 \lesssim R^{(2-t)\eta_{\rm int}}m_\mu m_\nu.
\]
The low-frequency convolution $P_0\lambda'$ satisfies the same bound
without the power of $R$, by its fixed Schwartz kernel.

Minkowski's inequality now shows that the resolution
\eqref{eq:dyadic-resolution} converges in
$L^{1+\eta_{\rm int}}(\dd m(g)\dd z)$, with the
$(1+\eta_{\rm int})$-th power of its norm bounded by
$Cm_\mu m_\nu$. Each summand is jointly measurable. Moreover, the
sum of their $L^{1+\eta_{\rm int}}(\R^2)$ norms is finite for almost
every $g$, by Fubini's theorem and the finiteness of Haar measure.
For these $g$, the resolution identifies the sum with $\lambda'_g$
as a distribution. It is therefore its density. Restoring the fixed
normalizations proves \eqref{int:mass-integrability-bound} and the
joint measurability assertion. This completes the proof.
\end{proof}

\begin{proof}[Proof of Theorem~\ref{int:intersection-theorem}]
Choose
\[
 \sbase<t<\min\{\alpha,\beta,4/3\},
 \qquad q=1+\eta_{\rm int}>1,
\]
where $\eta_{\rm int}$ is supplied by
Proposition~\ref{int:mass-integrability} for the uniform Frostman constant
specified below. Write $\Pi=\mu\otimes\nu$. Enlarge the Frostman
constants to at least one, so that their bounds hold at all radii.
Since $\mu$ and $\nu$ are probability measures, these bounds also
imply $t$-Frostman bounds.
Thus Theorem~\ref{thm:breaking-four-thirds} gives
$\lambda_g\ll\LL^2$ for almost all $g$.

Now define a measure on
$\OO\times\R^2$ by
\[
 \Lambda(E)=\int_{\OO}\int_{\R^4}
  \mathbf 1_E(g,T_gp)\dd\Pi(p)\dd m(g).
\]
The absolute continuity just established and Fubini's theorem give
$\Lambda\ll m\times\LL^2$. For every dyadic product cube $D=Q\times Q'$,
define $\Lambda_D$ in the same way with $\Pi|_D$ in place of $\Pi$.
Since $\Lambda_D\le\Lambda$, choose nonnegative jointly Borel densities
$f(g,z)$ and $f^D(g,z)$ for these measures. We now explain why these jointly measurable densities give the
desired densities for almost every fixed rotation.
Fix a dyadic product cube $D$ and a dyadic square $Q\subset\R^2$.
By the definitions of $\Lambda$ and $\Lambda_D$, for every Borel
set $E\subset O(2)$,
\[
 \int_E\int_Q f(g,z)\,dz\,dm(g)
 =\int_E\lambda_g(Q)\,dm(g),
\]
and
\[
 \int_E\int_Q f^D(g,z)\,dz\,dm(g)
 =\int_E (T_g)_\#(\Pi|_D)(Q)\,dm(g).
\]
Since these identities hold for every Borel set $E$, the corresponding
integrals over $Q$ agree for almost every $g$.
There are only countably many dyadic squares $Q$, so we may choose
one full-measure set of rotations on which both identities hold
for every such $Q$.

Dyadic squares determine finite Borel measures on $\R^2$.
Consequently, for every rotation in this full-measure set,
$f(g,\cdot)$ and $f^D(g,\cdot)$ are densities of
$\lambda_g$ and $(T_g)_\#(\Pi|_D)$, respectively.
Finally, there are only countably many dyadic product cubes $D$.
Taking the intersection of the corresponding full-measure sets,
we obtain one full-measure set of rotations for which these
conclusions hold simultaneously for every $D$.
We write the resulting densities as $f_g$ and $f_g^D$.

Fix $k\ge1$, set $r=2^{-k}$, and let $\cD_k$ denote the half-open dyadic
squares of side $r$ in $\R^2$. For $Q\in\cD_k$ with lower-left corner
$x_Q$, set
\[
 \mu^Q(E)=r^{-\alpha}\mu\bigl(Q\cap(x_Q+rE)\bigr).
\]
Then $\supp\mu^Q\subset[0,1]^2$ and
\[
 \mu^Q(B(u,\rho))\le C_\mu\rho^\alpha
 \qquad(\rho>0).
\]
For $\rho\le1$ this is bounded by $C_\mu\rho^t$, and for $\rho>1$
the uniform bound on $\mu^Q(\R^2)$ gives the same conclusion with a
larger constant. Define $\nu^{Q'}$ similarly, using $\beta$ and the
lower-left corner $y_{Q'}$. After translating $[0,1]^2$ by
$(-1/2,-1/2)$, all these measures are supported in $B(0,1)$ and satisfy
a common $t$-Frostman bound. Its constant $C_0$ depends only on the
original Frostman constants and exponents, not on $k,Q,Q'$. Moreover,
\begin{equation}\label{int:rescaled-piece-masses}
 \sum_{Q\in\cD_k}\mu^Q(\R^2)=r^{-\alpha},
 \qquad
 \sum_{Q'\in\cD_k}\nu^{Q'}(\R^2)=r^{-\beta}.
\end{equation}

Let $F_g^{QQ'}$ be the density of
$(g_\#\mu^Q)*\widetilde{\nu^{Q'}}$. Translating either measure only
translates this density, so Proposition~\ref{int:mass-integrability}
applies without changing its $L^q$ norm. Since
\[
 f_g^{Q\times Q'}(z)
 =r^{\alpha+\beta-2}
 F_g^{QQ'}\bigl((z-gx_Q+y_{Q'})/r\bigr)
\]
almost everywhere, that proposition and
\eqref{int:rescaled-piece-masses} give
\begin{align}
 \sum_{Q,Q'\in\cD_k}\int_{\OO}\int_{\R^2}
       \bigl(f_g^{Q\times Q'}(z)\bigr)^q\dd z\dd m(g)
 &\le C r^{(\alpha+\beta-2)q+2}
       \sum_{Q,Q'}\mu^Q(\R^2)\nu^{Q'}(\R^2)\notag\\
 &\le C\,2^{-k(\alpha+\beta-2)\eta_{\rm int}}.
 \label{int:dyadic-density-sum}
\end{align}

For each $g$ in the common full-measure set, let
$\{\sigma_{g,z}\}_z$ be the probability measures obtained by disintegrating
$\Pi$ along $T_g$. Thus, for every Borel set $E\subset\R^4$,
\[
 \Pi(E)=\int\sigma_{g,z}(E)\dd\lambda_g(z),
 \qquad \sigma_{g,z}(T_g^{-1}(z))=1
 \quad\text{for $\lambda_g$-almost all }z.
\]
Uniqueness of Radon--Nikodym derivatives implies
\[
 \sigma_{g,z}(Q\times Q')
 =\frac{f_g^{Q\times Q'}(z)}{f_g(z)}
\]
for $\lambda_g$-almost all $z$, simultaneously for all $k,Q,Q'$.
Define the measurable function
\[
 S_k(g,z)=\sum_{Q,Q'\in\cD_k}
       \left(\frac{f_g^{Q\times Q'}(z)}{f_g(z)}\right)^q
\]
on $\{0<f_g<\infty\}$, and set it equal to zero elsewhere. It agrees
almost everywhere with
$\sum\limits_{Q,Q'}\sigma_{g,z}(Q\times Q')^q$.
For every $0<\gamma<\alpha+\beta-2$, Tonelli's theorem and
\eqref{int:dyadic-density-sum} yield
\[
 \int_{\OO}\int_{\R^2}
   \left(\sum_{k\ge1}2^{k\gamma\eta_{\rm int}}S_k(g,z)\right)
   f_g(z)^q\dd z\dd m(g)
 \le C\sum_{k\ge1}
       2^{-k(\alpha+\beta-2-\gamma)\eta_{\rm int}}<\infty.
\]
Consequently, for almost all $g$ and $\lambda_g$-almost all $z$,
\begin{equation}\label{int:fibre-summability}
 \sum_{k\ge1}2^{k\gamma\eta_{\rm int}}S_k(g,z)<\infty.
\end{equation}
Here we used that $f_g^q\dd z$ and $\lambda_g=f_g\dd z$ have the same
null sets on $\{0<f_g<\infty\}$, which has full $\lambda_g$-measure.
Take common full-measure sets for a fixed countable sequence
$0<\gamma_j\uparrow\alpha+\beta-2$.

Fix such $g,z$, and write $\tau=\sigma_{g,z}$. Let $D_k(p)$ denote the
half-open dyadic product cube of side $2^{-k}$ containing $p$. For one
of these exponents $\gamma$, let $\mathrm{Bad}_k$ be the union of the product
cubes $D=Q\times Q'$ at scale $2^{-k}$ for which
$\tau(D)>2^{-k\gamma}$. Since these half-open cubes are disjoint,
\[
 \tau(\mathrm{Bad}_k)
 =\sum_{D\subset\mathrm{Bad}_k}\tau(D)^q\tau(D)^{-\eta_{\rm int}}
 \le 2^{k\gamma\eta_{\rm int}}S_k(g,z).
\]
By \eqref{int:fibre-summability} and Borel--Cantelli, for
$\tau$-almost every $p$,
$\tau(D_k(p))\le2^{-k\gamma}$ for all sufficiently large $k$.
Intersecting these full-measure sets over $\gamma_j$ gives
\[
 \liminf_{k\to\infty}
 \frac{\log\tau(D_k(p))}{\log 2^{-k}}
 \ge\alpha+\beta-2
 \qquad\text{for $\tau$-almost every }p.
\]

For completeness, this dyadic estimate also gives the required
Hausdorff dimension statement without any regularity assumption on the
conditional measures. Let $F$ be Borel with $\tau(F)>0$, and fix one
of the positive exponents $\gamma_j$. There are a Borel set
$F_0\subset F$ with $\tau(F_0)>0$ and an integer $k_0$ such that
\[
 \tau(D_k(p))\le2^{-k\gamma_j}
 \qquad(p\in F_0,\ k\ge k_0).
\]
A set $U$ with diameter $d\in(2^{-k-1},2^{-k}]$ meets at most $16$
dyadic cubes of side $2^{-k}$ in $\R^4$. Each of those cubes that meets
$U\cap F_0$ has $\tau$-mass at most $2^{-k\gamma_j}$.
Thus every countable cover $\{U_i\}$ of $F_0$ by sets with
$0<d_i=\operatorname{diam}U_i<2^{-k_0}$ satisfies
\[
 \tau(F_0)\le16\sum_i(2d_i)^{\gamma_j}.
\]
Sets of diameter zero meeting $F_0$ have zero $\tau$-mass by the same bound,
so allowing them in a cover does not change the conclusion.
It follows that $\HH^{\gamma_j}(F_0)>0$, and hence
$\dimH F\ge\alpha+\beta-2$ by letting $j\to\infty$.

Finally, fix Borel sets $A,B$ with $\mu(A)=\nu(B)=1$.
Since $\Pi(A\times B)=1$, disintegration gives
$\sigma_{g,z}(A\times B)=1$ for $\lambda_g$-almost all $z$.
The conditional measure is also carried by $T_g^{-1}(z)$, so the result
just proved applies to $(A\times B)\cap T_g^{-1}(z)$. The map
\[
 u\longmapsto(g^{-1}u,u-z)
\]
is a similarity of ratio $\sqrt2$ from $gA\cap(B+z)$ onto this fibre
intersection. Therefore
$\dimH(gA\cap(B+z))\ge\alpha+\beta-2$ for $\lambda_g$-almost all $z$.
The admissible translations contain a measurable set of full
$\lambda_g$-measure, which has positive Lebesgue measure because
$\lambda_g\ll\LL^2$. This proves all the assertions of the theorem.
\end{proof}

\begin{corollary}\label{int:set-intersections}
Let $A,B\subset\R^2$ be Borel sets.
\begin{enumerate}[label=\textup{(\roman*)}]
\item Suppose that $\min\{\dimH A,\dimH B\}>\sbase$. There is a Borel
set $\mathcal G\subset\OO$ with $m(\mathcal G)=1$ such that, for every
$g\in\mathcal G$ and every $\varepsilon>0$, there is a Lebesgue measurable
set $Z_{g,\varepsilon}\subset\R^2$ of positive measure for which
\[
 \dimH\bigl(gA\cap(B+z)\bigr)
 \ge\dimH A+\dimH B-2-\varepsilon
 \qquad\text{for every }z\in Z_{g,\varepsilon}.
\]
\item Suppose that $\alpha,\beta\in(\sbase,2]$ and
$\HH^\alpha(A)>0$, $\HH^\beta(B)>0$. For almost all $g\in\OO$, there
is a Lebesgue measurable set $Z_g\subset\R^2$ of positive measure such that
\[
 \dimH\bigl(gA\cap(B+z)\bigr)\ge\alpha+\beta-2
 \qquad\text{for every }z\in Z_g.
\]
\end{enumerate}
\end{corollary}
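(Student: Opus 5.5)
Both parts will be derived from Theorem~\ref{int:intersection-theorem}, applied with Frostman measures produced by Frostman's lemma; Theorem~\ref{thm:breaking-four-thirds} is only used implicitly, through the absolute continuity of $\lambda_g$.

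\emph{Part (ii).} Since $\HH^\alpha(A)>0$ and $A$ is Borel, Frostman's lemma gives a compact $A_0\subset A$ and a probability $\mu$ on $A_0$ with $\mu(B(x,r))\le Cr^\alpha$ for all $r$. This is the exponent $\alpha$ itself, not a smaller one, which is available because $\HH^\alpha(A)>0$ rather than only $\dimH A>\alpha$. Similarly we obtain $\nu$ on a compact $B_0\subset B$ with exponent $\beta$. Then $\mu(A)=\nu(B)=1$, and Theorem~\ref{int:intersection-theorem} gives, for almost all $g$, that $\lambda_g\ll\LL^2$ and that the set of $z$ with $\dimH(gA\cap(B+z))\ge\alpha+\beta-2$ contains a measurable set of full $\lambda_g$-measure. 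This set has positive Lebesgue measure, and we take it as $Z_g$. Measurability is not an issue, since we only need some measurable subset of the admissible translations.

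\emph{Part (i).} Fix a sequence $\varepsilon_n\downarrow0$ and set $a=\dimH A$, $b=\dimH B$. First consider $a,b\le2$ with both greater than $\sbase$. Choose $\alpha_n\in(\sbase,a)$ and $\beta_n\in(\sbase,b)$ with $\alpha_n+\beta_n>a+b-\varepsilon_n$. Frostman's lemma gives $\alpha_n$- and $\beta_n$-Frostman probabilities $\mu_n,\nu_n$ on compact subsets of $A$ and $B$. Theorem~\ref{int:intersection-theorem} then supplies a full-measure set $\mathcal G_n$. Since the conclusion is an almost-everywhere statement, we may shrink $\mathcal G_n$ to a Borel set of full measure. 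Put $\mathcal G=\bigcap_n\mathcal G_n$, which is Borel with $m(\mathcal G)=1$. Given $g\in\mathcal G$ and $\varepsilon>0$, pick $n$ with $\varepsilon_n\le\varepsilon$. The theorem gives a measurable set of full $(g_\#\mu_n)*\widetilde\nu_n$-measure, hence of positive Lebesgue measure, on which $\dimH(gA\cap(B+z))\ge\alpha_n+\beta_n-2>a+b-2-\varepsilon$. This set is $Z_{g,\varepsilon}$.

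The only point needing care is that $\mathcal G$ must be chosen independently of $\varepsilon$. This is handled by the countable sequence $\varepsilon_n$ and by the fact that the full-measure sets in Theorem~\ref{int:intersection-theorem} contain Borel full-measure subsets. No further obstacle is expected.
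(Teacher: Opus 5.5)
Your proposal is correct and follows the paper's proof essentially verbatim. For (ii) you take Frostman measures with exponents exactly $\alpha,\beta$ via compact approximation, and for (i) you take countably many Frostman pairs with exponents increasing to $\dimH A,\dimH B$ and intersect the full-measure sets from Theorem~\ref{int:intersection-theorem}. The only cosmetic differences are that you apply the theorem directly to $A,B$ (legitimate, since $\mu(A)=\nu(B)=1$) rather than to the compact subsets followed by monotonicity, and that your ``first consider $a,b\le2$'' is not a real case split, since this always holds in the plane.
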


\begin{proof}
For \textup{(i)}, write $a=\dimH A$ and $b=\dimH B$. Choose increasing
sequences $\alpha_j\uparrow a$ and $\beta_j\uparrow b$ with
$\sbase<\alpha_j<a$ and $\sbase<\beta_j<b$.
By Frostman's lemma, compact subsets $A_j\subset A$ and $B_j\subset B$
support probability measures with exponents $\alpha_j$ and $\beta_j$,
respectively. Apply Theorem~\ref{int:intersection-theorem} to each pair
of measures, and intersect the resulting countably many sets of
orthogonal transformations of full measure. Replacing this intersection
by a Borel subset of full measure gives $\mathcal G$.
For $g\in\mathcal G$ and $\varepsilon>0$, choose $j$ such that
$\alpha_j+\beta_j>a+b-\varepsilon$. The theorem gives a Lebesgue
measurable set of translations of positive measure on which
\[
 \dimH\bigl(gA\cap(B+z)\bigr)
 \ge\dimH\bigl(gA_j\cap(B_j+z)\bigr)
 \ge\alpha_j+\beta_j-2>a+b-2-\varepsilon.
\]
This proves \textup{(i)} with the same set $\mathcal G$ for every
$\varepsilon>0$.

For \textup{(ii)}, Frostman's lemma and compact approximation
\cite[Theorems~8.8 and~8.13]{Mattila} give compact subsets $A_0\subset A$ and
$B_0\subset B$ supporting probability measures with exponents
$\alpha$ and $\beta$. Apply Theorem~\ref{int:intersection-theorem} to
these measures and use
$gA_0\cap(B_0+z)\subset gA\cap(B+z)$. This proves \textup{(ii)} and
completes the proof.
\end{proof}

\begin{remark}\label{int:intersection-equality}
Under the hypotheses of Corollary~\ref{int:set-intersections}\textup{(ii)},
suppose in addition that $\dimH(A\times B)=\alpha+\beta$.
Then the conclusion holds with equality:
\[
 \dimH\bigl(gA\cap(B+z)\bigr)=\alpha+\beta-2
\]
for almost all $g$ and a set of translations of positive Lebesgue measure.
Indeed, for every fixed $g$, the standard upper bound for fibres of the
Lipschitz map $(a,b)\mapsto ga-b$ gives
\[
 \dimH\bigl(gA\cap(B+z)\bigr)
 \le\dimH(A\times B)-2
 \qquad\text{for Lebesgue almost all }z;
\]
see \cite[Theorem~7.7]{Mattila}. Removing this null set of translations from $Z_g$
proves the assertion. The product-dimension hypothesis holds, in
particular, when
$\dimH A=\dimP A=\alpha$ and $\dimH B=\dimP B=\beta$, by the standard
product inequalities.
\end{remark}

\section*{Acknowledgements}

The authors would like to thank Professor Pertti Mattila for many helpful discussions and valuable suggestions concerning this work. D. T. Tran would like to thank the Vietnam Institute for Advanced Study in Mathematics (VIASM) for its warm hospitality and excellent research environment


\begin{thebibliography}{GGGHMW}
\bibitem[CLPY]{CLPY}
A. Carbery, Z. K. Li, Y. Pang and P.-L. Yung.
A weighted formulation of refined decoupling and inequalities of
Mizohata--Takeuchi-type for the moment curve,
\emph{J. Geom. Anal.} \textbf{36} (2026), Article~48.

\bibitem[Cr]{Craig}
S. Craig.
The quasi-Assouad dimension of $(1,2t)$-Furstenberg sets in $\R^3$ is
extremized by sticky sets, arXiv:2510.06462v2 (2025).

\bibitem[FF]{FF}
L. R. Ford, Jr. and D. R. Fulkerson.
Maximal flow through a network,
\emph{Canad. J. Math.} \textbf{8} (1956), 399--404.

\bibitem[FO]{FO}
K. F\"assler and T. Orponen.
On restricted families of projections in \(\R^3\),
\emph{Proc. Lond. Math. Soc.} \textbf{109} (2014), no.~2, 353--381.

\bibitem[GGGHMW]{GGGHMW}
S. Gan, S. Guo, L. Guth, T. L. J. Harris, D. Maldague and H. Wang.
On restricted projections to planes in \(\R^3\),
\emph{Amer. J. Math.} \textbf{148} (2026), no.~4, 915--960.

\bibitem[GGM]{GGM}
S. Gan, L. Guth and D. Maldague.
An exceptional set estimate for restricted projections to lines in \(\R^3\),
\emph{J. Geom. Anal.} \textbf{34} (2024), Article~15.

\bibitem[GIOW]{GIOW}
L. Guth, A. Iosevich, Y. Ou and H. Wang.
On Falconer's distance set problem in the plane,
\emph{Invent. Math.} \textbf{219} (2020), no.~3, 779--830.

\bibitem[GWZ]{GWZ}
L. Guth, H. Wang and J. Zahl.
A streamlined proof of the Kakeya set conjecture in \(\R^3\),
arXiv:2601.14411v1 (2026).

\bibitem[KOV]{KOV}
A. K\"aenm\"aki, T. Orponen and L. Venieri.
A Marstrand-type restricted projection theorem in \(\R^3\),
\emph{Amer. J. Math.} \textbf{147} (2025), no.~1, 81--123.

\bibitem[M1]{Mattila1984}
P. Mattila.
Hausdorff dimension and capacities of intersections of sets in \(n\)-space,
\emph{Acta Math.} \textbf{152} (1984), 77--105.

\bibitem[M2]{Mattila1985}
P. Mattila.
On the Hausdorff dimension and capacities of intersections,
\emph{Mathematika} \textbf{32} (1985), no.~2, 213--217.

\bibitem[M3]{Mattila}
P. Mattila.
\emph{Geometry of Sets and Measures in Euclidean Spaces:
Fractals and Rectifiability}.
Cambridge Studies in Advanced Mathematics 44,
Cambridge University Press, 1995.

\bibitem[M4]{Mattila2022}
P. Mattila.
Hausdorff dimension and projections related to intersections,
\emph{Publ. Mat.} \textbf{66} (2022), no.~1, 305--323.

\bibitem[M5]{MattilaSurvey}
P. Mattila.
A survey on the Hausdorff dimension of intersections,
\emph{Math. Comput. Appl.} \textbf{28} (2023), no.~2, Article~49.

\bibitem[M6]{Mattila2024}
P. Mattila.
Hausdorff dimension of plane sections and general intersections,
\emph{Bull. London Math. Soc.} \textbf{56} (2024), no.~6, 1988--1998.

\bibitem[M7]{Mattila2017}
P. Mattila.
Hausdorff dimension, projections, intersections, and Besicovitch sets,
in \emph{New Trends in Applied Harmonic Analysis, Volume~2},
A. Aldroubi, C. Cabrelli, S. Jaffard and U. Molter (eds.),
Applied and Numerical Harmonic Analysis,
Birkh\"auser, Cham, 2019, 129--157.

\bibitem[OWY]{OWY}
W. O'Regan, S. Wu and G. Yi.
Two-ends Furstenberg inequality for transversal families and applications
to Fourier decay, arXiv:2607.08461 (2026).

\bibitem[OS]{OS}
T. Orponen and P. Shmerkin.
Projections, Furstenberg sets, and the \(ABC\) sum-product problem,
\emph{J. Amer. Math. Soc.} \textbf{39} (2026), no.~3, 857--913.

\bibitem[PY]{PY}
T. Pham and S. Yoo.
Intersection patterns and connections to distance problems,
arXiv:2304.08004v7 (2025).

\bibitem[PYZ]{PYZ}
M. Pramanik, T. Yang and J. Zahl.
A Furstenberg-type problem for circles, and a Kaufman-type restricted
projection theorem in \(\R^3\),
\emph{Amer. J. Math.}, to appear; arXiv:2207.02259v3.

\bibitem[RW]{RW}
K. Ren and H. Wang.
Furstenberg sets estimate in the plane, arXiv:2308.08819v3 (2025).

\bibitem[WW1]{WW2024}
H. Wang and S. Wu.
Restriction estimates using decoupling theorems and two-ends Furstenberg
inequalities, arXiv:2411.08871v3 (2024).

\bibitem[WW2]{WW}
H. Wang and S. Wu.
Two-ends Furstenberg estimates in the plane,
\emph{Math. Ann.} \textbf{395} (2026), Article~91.

\bibitem[WZ]{WZ}
H. Wang and J. Zahl.
Volume estimates for unions of convex sets, and the Kakeya set
conjecture in three dimensions, arXiv:2502.17655v1 (2025).
\bibitem[Yu]{Yu}
H. Yu.
On GILP's group-theoretic approach to Falconer's distance problem,
\emph{Glasgow Math. J.} \textbf{63} (2021), no.~3, 547--562.

\end{thebibliography}
\end{document}